\newcommand{\Cc}{\mathcal{C}}
\newcommand{\local}{$\text{}^{\ast}$local }
\newcommand{\maxideal}{\mathbf{m}}
\newcommand{\maximal}{${}^{\ast}$maximal }
\newcommand{\Pb}{\mathbf{P}}
\newcommand{\Ib}{\mathbf{I}}
\newcommand{\Z}{\mathbb{Z}}
\newcommand{\R}{\mathbb{R}}
\newcommand{\Q}{\mathbb{Q}}
\newcommand{\des}{\mathrm{des}}
\newcommand{\cone}{\mathrm{cone}}
\newcommand{\conv}{\mathrm{conv}}
\renewcommand{\phi}{\varphi}
\renewcommand{\emptyset}{\varnothing}
\def\b{{\boldsymbol b}}
\def\a{{\boldsymbol a}}
\def\c{{\boldsymbol c}}
\def\d{{\boldsymbol d}}
\def\e{{\boldsymbol e}}
\def\f{{\boldsymbol f}}
\def\g{{\boldsymbol g}}
\def\p{{\boldsymbol p}}
\def\s{{\boldsymbol s}}
\def\t{{\boldsymbol t}}
\def\u{{\boldsymbol u}}
\def\v{{\boldsymbol v}}
\def\x{{\boldsymbol x}}
\newcommand\rem[1]{}
\newcommand\commentout[1]{}
\newcommand{\Znn}{\Z_{\geq 0}}
\newcommand{\Zpos}{\Z_{\geq 1}}
\newcommand{\Pbn}{\Pb_n^{(\s)}}
\newcommand{\Ibn}{\Ib_n^{(\s)}}
\newcommand{\asc}{\operatorname{asc}}
\newcommand{\Asc}{\operatorname{Asc}}
\def\Rc{\mathcal{R}}
\newcommand{\Ext}{\operatorname{Ext}}
\newcommand{\soc}{\operatorname{soc}}
\newcommand{\flocomment}[1]{ {\textcolor{violet}{ #1  --Florian}}} 
\newcommand{\colonelcomment}[1]{ {\textcolor{red}{ #1  --McCabe}}} 
\newtheorem{theorem}{Theorem}[section]
\newtheorem{corollary}[theorem]{Corollary}
\newtheorem{lemma}[theorem]{Lemma}
\newtheorem{conjecture}[theorem]{Conjecture}
\theoremstyle{remark}
\newtheorem{remark}[theorem]{Remark}
\theoremstyle{definition}
\newtheorem{definition}[theorem]{Definition}
\title{Level algebras and $\s$-lecture hall polytopes}
\author{Florian Kohl\thanks{Supported by Academy of
Finland project number 324921.}\\
\small Department of Mathematics and Systems Analysis\\[-0.8ex]
\small Aalto University\\[-0.8ex] 
\small Espoo, Finland.\\
\small\tt florian.kohl@aalto.fi\\
\and
McCabe Olsen\\
\small Department of Mathematics\\[-0.8ex]
\small Rose-Hulman Institute of Technology\\[-0.8ex]
\small Terre Haute, IN 47803, USA\\
\small\tt olsen@rose-hulman.edu}
\begin{document}

%\keywords{lecture hall polytopes, Gorenstein polytopes, level polytopes, $\s$-Eulerian polynomials}

% 05A17 Partitions of integers
% 13H10 Local Rings and semilocal rings: Special types (Cohen-Macaulay, Gorenstein, Buchsbaum, etc.) 
% 13P99 Computational Aspects of Commutative algebra (Other)
%52B20 Lattice polytopes (including relations with commutative algebra and algebraic geometry)

\maketitle

%%%%%%%%%%%%%%%%%%%%%%%%%%%%%%%%%%%%%%%%%%%%%%%%%%%%%%%%%%%%%%

\begin{abstract}
Given a family of lattice polytopes, a common endeavor in Ehrhart theory is the classification of those polytopes in the family that are Gorenstein, or more generally level. In this article, we consider these questions for $\s$-lecture hall polytopes, which are a family of simplices arising from $\mathbf {s}$-lecture hall partitions. In particular, we provide concrete classifications for both of these properties purely in terms of $\s$-inversion sequences. Moreover, for a large subfamily of $\s$-lecture hall polytopes, we provide a more geometric classification of the Gorenstein property in terms of  its tangent cones. We then show how one can use the classification of level $\s$-lecture hall polytopes to construct infinite families of level $\s$-lecture hall polytopes, and to describe level $\s$-lecture hall polytopes in small dimensions.\end{abstract}

%\tableofcontents

%%%%%%%%%%%%%%%%%%%%%%%%%%%%%%%%%%%%%%%%%%%%%%%%%%%%%%%%%%%
\section{Introduction}
\label{sec:Intro}

Let $P\subset \R^n$ be a convex lattice polytope. 
It is a common question in Ehrhart theory to determine whether $P$ is a \emph{Gorenstein} polytope, that is, whether the associated semigroup algebra of $P$ is Gorenstein. 
\commentout{The precise definitions of semigroup algebras and the Gorenstein property can be found in later sections along with all other technical definitions of properties mentioned in this introduction. }
Gorenstein polytopes are also of interest within geometric combinatorics, as Gorenstein polytopes can be characterized in purely geometric terms. Particularly, $P$ is Gorenstein if and only  there is some integer dilate $cP$ which is a reflexive polytope \cite{DeNegriHibi}. 
Likewise, the Gorenstein property is equivalent to the interesting enumerative property that the  Ehrhart $h^\ast$-polynomial of $P$ has palindromic coefficients \cite{Stanley-HilbertFunctions}. 
Gorenstein polytopes are also of interest in algebraic geometry for a variety %%%Haha get it? variety%%%
of reasons, including connections to mirror symmetry (see, e.g., \cite{Batyrev} and \cite[Section 8.3]{CoxLittleSchenck}). 
Roughly speaking, a pair of reflexive lattice polytopes gives rise to a mirror pair of Calabi--Yau manifolds. 
We recommend \cite{CoxMirrorSurvey} for an excellent survey article about reflexive polytopes and their connection to mirror symmetry.
Subsequently, classifications of the Gorenstein property have been extensively studied and are known for many families including order polytopes \cite{Stanley-PosetPolytopes,Hibi-LatticesAndStraighteningLaws}, twinned poset polytopes \cite{HibiMatsuda-twinned}, and $r$-stable $(n,k)$-hypersimplices \cite{HibiSolus}.

Gorenstein algebras are intimately related to level algebras. We say that $P$ is a \emph{level} polytope if its associated semigroup algebra is a \emph{level algebra}, a generalization of Gorenstein algebras.
Classifying level polytopes has not been studied to nearly the same degree as detecting the Gorenstein property (see, e.g., \cite{EneHerzogHibiSaeedi,HigashitaniYanagawa-Level}).
However, in addition to the independent interest in level algebras, if $P$ is level, we obtain nontrivial inequalities on the coefficients of the Ehrhart $h^\ast$-polynomial, which are not satisfied for general lattice polytopes (see, e.g., \cite{StanleyGreenBook}). 

One family of well-studied polytopes  are the {$\s$-lecture hall polytopes}. For a given $\s\in\Zpos^n$, the \emph{$\s$-lecture hall polytope} is the simplex defined by
	\[
	\Pbn\coloneqq\left\lbrace \boldsymbol \lambda\in \R^n \, : \, 0\leq \frac{\lambda_1}{s_1}\leq\frac{\lambda_2}{s_2}\leq \cdots\leq \frac{\lambda_n}{s_n}\leq 1 \right\rbrace.
	\]
In the literature, $\s$-lecture hall polytopes are also sometimes called \emph{$\s$-lecture hall simplices}. 
These polytopes arise from the extensively investigated \emph{$\s$-lecture hall partitions}, introduced by Bousquet-M\'{e}lou and Eriksson \cite{BME-LHP1,BME-LHP2}. To quote Savage and Schuster from \cite{SavageSchuster}: ``Since their discovery, $\s$-lecture hall partitions and their generalizations have emerged as fundamental tools for interpreting classical partition identities and for discovering new ones.''
Though many algebraic and geometric properties of $\s$-lecture hall polytopes are known (see, e.g., \cite{Savage-LHP-Survey}), there is not an explicit full characterization of the Gorenstein property and there are no known levelness results to date.

Our focus is to determine a classification of the Gorenstein and level properties in $\s$-lecture hall polytopes.
In particular, we provide a full characterization for the Gorenstein property. We also give another more geometric characterization in the case that $\s$ has at least one index $i$, $1< i\leq n$, such that $\gcd(s_{i-1},s_{i})=1$. 
These main results on the Gorenstein property are as follows:   	 
\begin{theorem}
\label{thm:NewGorensteinChar}
Let $\s\rem{=(s_1,s_2,\ldots,s_n)}\in \Z^{n}_{\geq 1}$. Then $\Pbn$ is Gorenstein if and only if there exists $\c\in\Z^{n+1}$  satisfying $c_1=1$,
	\[
	c_js_{j-1}=c_{j-1}s_j+\gcd(s_{j-1},s_{j})
	\]
for $j>1$,  and 
\[
c_{n+1}s_n = 1 + c_n.
\]

\end{theorem}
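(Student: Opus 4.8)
The plan is to reduce the Gorenstein property to the existence of an interior lattice point of the cone over $\Pbn$ at the correct height, and then to translate the coordinates of that point into the stated recurrence. Recall that a lattice polytope $P$ is Gorenstein if and only if its semigroup algebra $k[C(P)]$, where $C(P) = \cone(P\times\{1\}) \subset \R^{n+1}$, has a canonical module generated by a single element; equivalently, there is a unique lattice point $\omega$ in the \emph{relative interior} of $C(P)$ that lies on the lowest possible dilate, i.e.\ $\omega = (\boldsymbol\lambda, c)$ for some $\boldsymbol\lambda$ in the relative interior of $cP$ with $c$ minimal, and moreover every facet-supporting hyperplane of $C(P)$ passes through $\omega$ shifted appropriately. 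For a simplex this is clean: $\Pbn$ is Gorenstein iff there is a (necessarily unique) lattice point at lattice distance exactly $1$ from every facet hyperplane of $C(\Pbn)$.

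First I would write down the facet description of $\Pbn$ explicitly. The defining inequalities are $\lambda_1 \geq 0$, $\lambda_j/s_j - \lambda_{j-1}/s_{j-1} \geq 0$ for $2 \leq j \leq n$, and $1 - \lambda_n/s_n \geq 0$; clearing denominators gives primitive integer facet normals. For the middle inequalities, the primitive form of $s_{j-1}\lambda_j - s_j\lambda_{j-1} \geq 0$ is obtained by dividing by $\gcd(s_{j-1}, s_j)$, which is exactly why that gcd appears in the statement. The last inequality $s_n - \lambda_n \geq 0$ is already primitive. Homogenizing with the height variable $t$, a lattice point $(\boldsymbol\lambda, t) = (\lambda_1, \ldots, \lambda_n, t)$ lies in the interior of $C(\Pbn)$ at lattice distance one from each facet precisely when each primitive facet form evaluates to $1$ on it.

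Next I would set $c_j := \lambda_{j-1}$ for $2 \leq j \leq n+1$ and $c_1 := t$, and read off what "distance one from every facet" means coordinate by coordinate. The facet $\lambda_1 \geq 0$ forces $\lambda_1 = 1$, but after the reindexing it is cleaner to track the homogenized cone: the first inequality becomes (in the notation of the theorem) $c_1 = 1$ once one checks the normalization is consistent with how $t$ enters. The $j$-th "middle" facet condition, $(s_{j-1}\lambda_j - s_j\lambda_{j-1})/\gcd(s_{j-1},s_j) = 1$, rearranges to exactly $c_{j+1}s_{j-1} = c_j s_j + \gcd(s_{j-1}, s_j)$ after the shift — I would double-check the index bookkeeping so that it matches the displayed relation $c_j s_{j-1} = c_{j-1} s_j + \gcd(s_{j-1}, s_j)$ for $j > 1$. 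Finally the top facet $s_n - \lambda_n \geq 0$, homogenized, at distance one gives $s_n t - \lambda_n = 1$, i.e.\ $c_{n+1} s_n = 1 + c_n$ once we use $c_1 = t = 1$. Since $\Pbn$ is a simplex, such an interior point, if it exists, is automatically unique and automatically certifies Gorensteinness (no further interior lattice points can appear at lower height), so existence of $\c \in \Z^{n+1}$ solving the system is equivalent to the Gorenstein property.

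The main obstacle, and the step I would be most careful about, is the correctness of the homogenization and index shift: one must verify that the primitive facet normal of each hyperplane is correctly computed (in particular that no common factor with the $t$-coefficient has been overlooked when clearing denominators in the middle inequalities), and that the affine-to-cone translation sends "interior at lattice height one" to precisely the given affine-linear system in the $c_j$ with the stated boundary conditions $c_1 = 1$ and $c_{n+1}s_n = 1 + c_n$. A secondary point worth spelling out is \emph{why} the solution, if it exists over $\Q$, can be taken in $\Z^{n+1}$ and \emph{why} one need not separately impose that the corresponding $\boldsymbol\lambda$ be interior to the right dilate — both follow from the simplex structure together with the fact that each facet equation is satisfied with value exactly $1 > 0$, but this deserves an explicit sentence rather than being left implicit.
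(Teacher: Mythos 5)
Your approach is sound and does prove the theorem, but it takes a more hands-on route than the paper. The paper's proof is a one-step reduction: by Lemma~\ref{lem:H-DescriptionConeOverPolytope}, $\cone(\Pbn)=\{\boldsymbol\lambda\in\R^{n+1}:0\le \lambda_1/s_1\le\cdots\le\lambda_n/s_n\le\lambda_{n+1}\}$, which is literally the lecture hall cone $\Cc_{n+1}^{(s_1,\ldots,s_n,1)}$, so Theorem~\ref{conesGor} applies verbatim; the extra equation $c_{n+1}s_n=1+c_n$ is just the $j=n+1$ instance of the recurrence for the augmented sequence, since $s_{n+1}=1$ and $\gcd(s_n,1)=1$. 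What you do instead is re-derive that criterion from scratch: Stanley's description of the canonical module together with the ``lattice distance one to every facet'' test for simplicial cones (this is exactly Lemma~\ref{GorLemma}, which the paper invokes in the proof of Theorem~\ref{GorensteinChar}), applied to the primitive facet normals of $\cone(\Pbn)$, where dividing $s_{j-1}\lambda_j-s_j\lambda_{j-1}\ge 0$ by $\gcd(s_{j-1},s_j)$ is indeed the source of the gcd in the statement. Your route is self-contained (no black-box citation of the Beck et al.\ / Bousquet-M\'elou--Eriksson cone result) at the cost of essentially redoing its proof in this special case; the paper's route is shorter and makes the structural point that the cone over a lecture hall polytope is again a lecture hall cone.

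One concrete item to repair in your write-up: the identification $c_1:=t$, $c_j:=\lambda_{j-1}$ does not reproduce the displayed recurrence, and the assertion ``$c_1=t=1$'' is false in general --- the height $t$ of the Gorenstein point is the codegree of $\Pbn$, not $1$. The bookkeeping that works is the obvious one: $c_j=\lambda_j$ for $1\le j\le n$ and $c_{n+1}=t$. Then the facet $\lambda_1\ge 0$ at distance one gives $c_1=1$; the primitive middle facets give $\bigl(s_{j-1}\lambda_j-s_j\lambda_{j-1}\bigr)/\gcd(s_{j-1},s_j)=1$, i.e.\ $c_js_{j-1}=c_{j-1}s_j+\gcd(s_{j-1},s_j)$ for $2\le j\le n$; and the top facet $s_nt-\lambda_n=1$ is exactly $c_{n+1}s_n=1+c_n$, with no condition forcing $t=1$. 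With that fix, and noting (as you do) that integrality is part of the hypothesis and that the distance-one equations force the point to be interior, your argument is complete.
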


The next result is not as general, but it guarantees that, under the condition that $\gcd(s_{i-1},s_i)=1$ for some $1<i \leq n$, if the two vertex cones of $\Pbn$ at $(0,0,\dots,0)$ and at $(s_1,s_2,\dots, s_n)$ are Gorenstein, then $\Pbn$ is Gorenstein. While we will use this geometric perspective in the proof, we give an equivalent reformulation highlighting that the Gorenstein condition is an explicit condition on $\s$.

\begin{theorem}\commentout{[see Theorem \ref{GorensteinChar}]}
\label{GorensteinChar}
Let $\s\in \Z_{\geq 1}^n$ be such that $\gcd(s_{i-1},s_i)=1$ for some $1<i\leq n$. The polytope $\Pbn$ is Gorenstein if and only if for all $j \geq 2$
\begin{equation}
\label{eq:GorChar}
\rem{c_j \coloneqq }\frac{\gcd(s_{j-1},s_j)}{s_{j-1}} + \sum_{k=1}^{j-1} \frac{\gcd(s_{k-1},s_k)}{s_{k-1}s_k} \quad \text{and } \quad \rem{d_j \coloneqq }\frac{\gcd(s_{n-j+2},s_j)}{s_{n-j+1}} + \sum_{k=1}^{j-1} \frac{\gcd(s_{n-k+2},s_{n-k+1})}{s_{n-k+2}s_{n-k+1}}
\end{equation}
are integers where $s_0 = s_{n+1}=1$.
\rem{, and define $\overleftarrow{\s}=(\overleftarrow{s_1},\ldots,\overleftarrow{s_n})\coloneqq(s_n,s_{n-1},\ldots,s_1)$. Then $\Pbn$ is Gorenstein if and only if there exist $\c,\d\in\Z^n$ with $c_1=d_1=1$ satisfying
	\[
	c_js_{j-1}=c_{j-1}s_j+\gcd(s_{j-1},s_{j})
	\]
and 
	\[
	d_j\overleftarrow{s_{j-1}}=d_{j-1}\overleftarrow{s_j}+\gcd(\overleftarrow{s_{j-1}},\overleftarrow{s_{j}})
	\]	
for $j>1$.}
\end{theorem}

\begin{remark}
It is straightforward to show that $\Pbn$ is Gorenstein if and only if $P_{n+1}^{(1,\s)}$ is Gorenstein, as $P_{n+1}^{(1,\s)}$ is the lattice pyramid over $\Pbn$. Since $P_{n+1}^{(1,\s)}$ satisfies the conditions of Theorem~\ref{GorensteinChar}, one can apply Theorem~\ref{GorensteinChar} to \emph{any} $\s$-lecture hall polytope.
\end{remark}

Moreover, we provide a characterization for levelness.
For a given $\s$, let $\Ibn\coloneqq\{e\in\Znn^n \, : \, 0\leq e_i<s_i\}$ be the set of \emph{$\s$-inversion sequences}. Given $\e\in \Ibn$, let $\asc(\e)$ be the \emph{ascent number} of $\e$ and let  $\Ib_{n,k}^{(\s)}$ denote the set of inversion sequences with ascent number $k$.
Furthermore,  for two inversion sequences $\e_1,\e_2\in\Ibn$, we say that $\e_1+\e_2$ is the inversion sequence formed by componentwise addition where the $i$th component is considered modulo $s_i$. 
These notions will be defined more thoroughly later sections.
Our characterization is the following theorem: 

\begin{theorem}
\label{THM:LEVEL}
Let $\s=(s_1,s_2,\ldots, s_n)$ and let $r=\max\{\asc(\e) \, : \, \e\in \Ibn \}$. Then $\Pbn$ is level if and only if for any $\e\in\Ib_{n,k}^{(\s)}$ with $1\leq k <r$ there exists some $\e'\in \Ib_{n,1}^{(\s)}$ such that $(\e+\e')\in \Ib_{n,k+1}^{(\s)}$.
\end{theorem}

The structure of this manuscript is as follows. In Section \ref{sec:Background}, we provide all necessary background, definitions, notation, and terminology.
The focus of Section \ref{sec:Gorenstein} is proving the Gorenstein classifications.
In Section \ref{sec:Level}, we prove the characterization of the level property. 
We conclude in Section \ref{sec:Concluding} with some potential ways to improve and extend these results and other future directions.\\

%\noindent
%\textbf{Acknowledgements.} The authors would like to thank Matthias Beck, Benjamin Braun, Christian Haase, Carla Savage, and Zafeirakis Zafeirakopoulos for helpful comments and suggestions on this work.
%The authors would also like to thank Takayuki Hibi and Akiyoshi Tsuchiya for organizing the Workshop on Convex Polytopes for Graduate Students at Osaka University in January 2017, which is where this work began. Moreover, the authors would like to express their gratitude to the anonymous referee for their detailed report. Their meticulous reading and constructive criticism improved this paper significantly.

\section{Background}\label{sec:Background}
In this section, we provide the necessary terminology and background literature for our results. Specifically, we review lattice polytopes and Ehrhart theory, Gorenstein algebras and level algebras, and the polyhedral geometry of $\s$-lecture hall partitions. Subsequently, some or all of these subsections may be safely skipped by the experts.

\subsection{Lattice polytopes and Ehrhart theory}\label{sec:Background:Ehrhart}
A \emph{polytope} $P\subset \R^n$ is the convex hull of finitely many points in $\R^n$, i.e,
	\[
	P=\conv \{\v_1,\ldots, \v_r\}: = \left\{ \sum_{i=1}^r \lambda_i \v_i \colon \, \lambda_i \geq 0, \sum \lambda_i = 1, \v_i \in \R^n \right\}  .
	\]
The inclusion-minimal set $V$ such that $P = \conv \{ \v \colon \v \in V\}$ is called the \emph{vertex set}, and its elements are called the \emph{vertices of $P$}. The polytope $P$ is a \emph{lattice} polytope if $V \subset \Z^n$.
The \emph{Ehrhart polynomial $i(P,t) \colon \Z_{\geq 1} \to \Z_{\geq 1}$} of $P$ is the function 
	\[
	i(P,t)\coloneqq\#(tP\cap \Z^n)
	\]
which agrees with polynomial in the variable $t$ of degree $d=\dim(P)$ by a result of Ehrhart \cite{Ehrhart}. The \emph{Ehrhart series} of $P$ is the rational generating function
	\[
	1+\sum_{t\geq 1}i(P,t) z^t=\frac{h^\ast(P,z)}{(1-z)^{d+1}},
	\]
where the numerator is the polynomial 
	\[
	h^\ast(P,z)=\sum_{j=0}^{d}h_j^\ast(P)z^j,
	\]
which we call the \emph{Ehrhart $h^\ast$-polynomial} of $P$. The coefficient vector\\ $h^{\ast}(P)=(h^{\ast}_0(P),h^{\ast}_1(P),\dots,h^{\ast}_d(P))$ is known as the \emph{$h^{\ast}$-vector}.
If the polytope is clear from context, we will simplify our notation to 	$(h^{\ast}_0,h^{\ast}_1,\dots,h^{\ast}_d)$. 
By a result of Stanley \cite{Stanley-NonNeg}, we know that $h_j^\ast(P)\in \Znn$ for all $j$. 
Many additional properties are known about Ehrhart $h^\ast$-polynomials (see, e.g, \cite{BeckRobins-CCD,Hibi-ACCP}). Classifying the set of $h^{\ast}$-vectors is one of the most important open problems in Ehrhart theory. Therefore, inequalities for the coefficients are of special interest, see \cite{Stapledon1, Stapledon2, HibiInequalities, StanleyInequalities}. Hofscheier, Katth\"an, and Nill proved a structural result about $h^{\ast}$-vectors, see \cite[Theorem 3.1]{Spanning}, where they showed that if the integer points of a lattice polytope $P$ span the integer lattice, then $h^{\ast}(P,z)$ cannot have internal zeros. There are even some universal inequalities for $h^{\ast}$-vectors, i.e., inequalities independent of the degree and the dimension of the polytope, see \cite{BallettiHigashitani}.  

 Given a lattice polytope $P$ with vertex set $V(P)$, define the \emph{cone over} $P$ to be
	\[
	\cone(P): ={\rm span}_{\R_{\geq 0}}\{(\v,1) \, : \, \v\in V(P)\}\subset \R^n\times \R.
	\]
Let $\v$ be a vertex of  $P$. The \emph{vertex cone of $P$ at $\v$}  is defined as
\[
T_{\v}(P) \coloneqq \{\v + \lambda (\x - \v) \colon \x \in P, \lambda \geq 0\}.
\]
The vertex cone $T_{\v}(P)$ is also known as the \emph{tangent cone} of $P$ at $\v$.
Let $F$ be a facet of a lattice polytope $P$ ($\cone(P)$, respectively) corresponding to $\langle \a_F, \x \rangle = b_F$, where $a_F$ is primitive.  If $|\langle a_F, \x \rangle - b| = d$, then we say that $\x \in P$ (or $x \in \cone(P)$, respectively) has \emph{lattice distance $d$} to $F$.

Let $k$ be an algebraically closed field of characteristic zero. We define the \emph{affine semigroup algebra} of $P$ to be 
	\[
	k[P]\coloneqq k[\cone(P)\cap \Z^{n+1}]=k[\x^{\p}\cdot y^m \, : \, (\p,m)\in  \cone(P)\cap \Z^{n+1}]\subset k[x_1^{\pm 1},\ldots, x_n^{\pm 1},y].
	\]	

This algebra is known to be a finitely generated, local $k$-algebra with a natural $\Znn$-grading arising from the $y$-degree (see, e.g., \cite{MillerSturmfels-CCA}). Moreover, $k[P]$ is Cohen-Macaulay \cite{Hochster}. Given the observation that the lattice points $(\p,m)\in \cone(P)\cap \Z^{n+1}$ in the cone are in clear bijection with elements in $mP\cap \Z^n$, the Ehrhart polynomial is the Hilbert function for the algebra $k[P]$.

We say that $P$ satisfies the \emph{integer decomposition property} (or IDP for short) if for any $\x\in t P\cap \Z^n$, there exist $t$ lattice points $\{\p_1,\p_2,\ldots,\p_t\}\in P\cap \Z^n$ such that $\p_1+\p_2+\cdots +\p_t=\x$. 
Equivalently, $P$ satisfies the IDP if the semigroup algebra $k[P]$ is generated entirely in degree 1.

Suppose that $P$ is a simplex and has vertex set $\{\v_0,\cdots,\v_{d}\}$. The \emph{(half-open) fundamental parallelepiped} of $P$ is the bounded region of $\cone(P)$ defined as
	\[
	\Pi_P\coloneqq\left\lbrace \sum_{i=0}^{d} \eta_i(\v_i,1) \, : \, 0\leq \eta_i <1 \right\rbrace\subset \cone(P).
	\] 
For simplices, we can use the fundamental parallelepiped to compute the Ehrhart $h^\ast$-polynomial. In particular, the coefficients are given by
	\[
	h^\ast_i(P)=\#\left\{\x \in \Pi_P\cap \Z^{n+1} \colon \, \x = (x_1,\dots, x_n,i)\right\}, %\#\left((q,i)\in \Pi_P\cap \Z^{n+1}\right),
	\]
that is, the number of lattice points at height $i$ in $\Pi_P$.
For more details and exposition, the reader should consult \cite{BeckRobins-CCD}. 		

\subsection{Gorenstein algebras and level algebras}\label{sec:Backgroup:Gorenstein}
We now provide a brief review of Gorenstein and level algebras. Since we will only be concerned with semigroup algebras of polytopes, we will restrict ourselves to this case. \rem{in general, as well as in the special case of semigroup algebras of polytopes. }
For additional details and expositions, the reader should consult \cite{BrunsHerzog,StanleyGreenBook} as references. 

%%%%%%%%%%%%%%%%%%%%%%%%%%%%%%%% BEGIN COMMENT OUT %%%%%%%%%%%%%%%%%%%%%%%%%%%
\rem{We begin by defining a graded version of locality:
\begin{definition}[{\cite[Definition 6.15]{BrunsGubeladze}}]
\label{def:ch1StarLocal}
Let $\Rc$ be a $\Z_{\geq 0}$-graded ring. Then we say that $\Rc$ is \emph{\local with \maximal ideal $\maxideal$} if the homogeneous non-units of $\Rc$ generate the proper ideal $\maxideal$.
\end{definition}

Let $k$ be an algebraically closed field of characteristic zero. Let $\mathcal{R}=\bigoplus_{i\in\Z} \mathcal{R}_i$ be a finitely generated $\Z$-graded $k$-algebra of Krull dimension $d$. Suppose that $\Rc$ is \local and Cohen-Macaulay. 
The \emph{canonical module}, $\omega_\Rc$, of $\Rc$ is the unique module (up to isomorphism)  such that
$\Ext_\Rc^d(k,\omega_\Rc)=k$ and $\Ext_\Rc^i(k,\omega_\Rc)=0$ when $i\neq d$. We say that $\Rc$ is \emph{Gorenstein} if $\omega_\Rc\cong\Rc$ as an $\Rc$-module, or equivalently if $\omega_\Rc$ is generated by a single element.

One generalization of the Gorenstein property which is also of interest is the \emph{level} property.
We say that $\Rc$ is \emph{level} if $\omega_\Rc$ is generated by elements of the same degree, that is, $\omega_\Rc$ has minimal generating set $\{\sigma_1,\ldots,\sigma_j\}$ such that $\deg(\sigma_1)=\deg(\sigma_2)=\cdots=\deg(\sigma_j)$. 
An equivalent formulation of the level property is often more fruitful for computational purposes. Recall for any $\Rc$-module $M$, the \emph{socle} of $M$ is $\soc(M)\coloneqq\{u\in M \, : \, R_+u=0\}$ where $R_+$ is the ideal generated by the homogeneous non-units of $\Rc$. \rem{unique \maximal ideal of $\Rc$.}
It is equivalent to say that $\Rc$ is level if for any homogeneous system of parameters $\theta_1,\ldots,\theta_d$ of $\Rc$, all the elements of the graded vector space $\soc(\Rc/(\theta_1,\ldots,\theta_d))$ are of the same degree. Such a homogeneous system of parameters always exists, since $\Rc$ is Cohen--Macaulay, see \cite[Chapter III, Proposition 3.2]{StanleyGreenBook}.

}%%%%%%%%%%%%%%%%%%%%%%%%%%%%%%%% END COMMENT OUT %%%%%%%%%%%%%%%%%%%%%%%%%%%%%%%

In commutative algebra, the Gorenstein property of a graded $k$-algebra $\Rc$ is often defined in terms of the canonical module $\omega_{\Rc}$. In the case of a semigroup algebra $k[P]$ of a lattice polytope $P$, Stanley \cite{Stanley-HilbertFunctions} explicitly describes the canonical module as
	\[
	\omega_{k[P]}=k[\cone(P)^\circ\cap\Z^{n+1}]
	\]
where $\cone(P)^\circ$ denotes the relative interior of the cone. We say that $k[P]$ is \emph{Gorenstein} if there exists $\c\in\Z^{n+1}$ such that 
	\[
	\c+(\cone(P)\cap\Z^{n+1})=\cone(P)^\circ\cap\Z^{n+1},
	\]	
and $\c$ is called the \emph{Gorenstein point of $\cone(P)$}. Equivalently, $k[P]$ is Gorenstein if and only if there is a $\c\in\Z^{n+1}$ having lattice distance $1$ to all facets of $\cone(P)$, see~\cite[Theorem 6.32]{BrunsGubeladze}. Moreover, note that $P$ is Gorenstein if and only if $h^\ast(P,z)$ is a palindromic polynomial, see \cite[Theorem 4.4]{Stanley-HilbertFunctions}.

One generalization of the Gorenstein property which is also of interest is the \emph{level} property.
We say that $k[P]$ is \emph{level} if $\omega_{k[P]}$ is generated by elements of the same degree, that is, $\omega_{k[P]}$ has minimal generating set $\{\sigma_1,\ldots,\sigma_j\}$ such that $\deg(\sigma_1)=\deg(\sigma_2)=\cdots=\deg(\sigma_j)$. 
An equivalent formulation of the level property is often more fruitful for computational purposes. Recall for any ${k[P]}$-module $M$, the \emph{socle} of $M$ is $\soc(M)\coloneqq\{u\in M \, : \, R_+u=0\}$ where $R_+$ is the ideal generated by the homogeneous non-units of ${k[P]}$. \rem{unique \maximal ideal of ${k[P]}$.}
It is equivalent to say that ${k[P]}$ is level if for any homogeneous system of parameters $\theta_1,\ldots,\theta_d$ of ${k[P]}$, all the elements of the graded vector space $\soc({k[P]}/(\theta_1,\ldots,\theta_d))$ are of the same degree, see \cite[Chapter III, Proposition 3.2]{StanleyGreenBook}. \rem{Such a homogeneous system of parameters always exists, since ${k[P]}$ is Cohen--Macaulay.}

We can also provide a more concrete description of  the level property. We say $k[P]$ is \emph{level} if there exists some finite collection $\c_1,\ldots,\c_m\in \Z^{n+1}$ where
	\[
	\sum_{i=1}^m (\c_i+(\cone(P)\cap\Z^{n+1}))=\cone(P)^\circ\cap\Z^{n+1},
	\]	
and the additional restriction that $c_{1_{n+1}}=c_{2_{n+1}}=\cdots=c_{m_{n+1}}$. For a lattice polytope $P$, we say that $P$ is \emph{Gorenstein} (respectively, \emph{level}) if $k[P]$ is Gorenstein (respectively, level).

\subsection{Polyhedral geometry of $\s$-lecture hall partitions}\label{sec:Background:LectureHall}
In this subsection, we briefly recall relevant properties and results on $\s$-lecture hall cones and $\s$-lecture hall polytopes. For a more in-depth overview of some of these results and many others, the reader should consult the excellent survey of Savage \cite{Savage-LHP-Survey}.

Let $\s=(s_1,s_2,\ldots,s_n)\in \Zpos^n$ be a sequence. Given any $\s$-sequence, define the \emph{$\s$-lecture hall partitions} to be the set 
	\[
	L_n^{(\s)}\coloneqq\left\lbrace \boldsymbol \lambda\in \Z^n \, : \, 0\leq \frac{\lambda_1}{s_1}\leq\frac{\lambda_2}{s_2}\leq \cdots\leq \frac{\lambda_n}{s_n} \right\rbrace.
	\] 	 
We can associate to the set of $\s$-lecture hall partitions several discrete geometric objects, in particular, the \emph{$\s$-lecture hall polytope} and the \emph{$\s$-lecture hall cone}.
For a given $\s$, the  \emph{$\s$-lecture hall polytope}  is defined 
	\[
	\begin{array}{rcl}
	\Pbn & \coloneqq &  \displaystyle \left\lbrace \boldsymbol\lambda\in \R^n \, : \, 0\leq \frac{\lambda_1}{s_1}\leq\frac{\lambda_2}{s_2}\leq \cdots\leq \frac{\lambda_n}{s_n}\leq 1 \right\rbrace\\
	& = & \conv\{(0,\ldots,0), (0,\ldots,0,s_i,s_{i+1},\ldots,s_n) \ \mbox{for } 1\leq i\leq n\}.
	\end{array}
	\]

The  Ehrhart $h^\ast$-polynomials of $\Pbn$  have been completely classified. Given $\s$, the set of \emph{$\s$-inversion sequences} is defined as $\Ibn\coloneqq\{e\in\Znn^n \, : \, 0\leq e_i<s_i\}$. For a given $\e\in\Ibn$, the \emph{ascent set} of $\e$ is
	\[
	\Asc(\e)\coloneqq\left\lbrace i\in \{0,1,\ldots,n-1\} \, : \, \frac{e_i}{s_i}<\frac{e_{i+1}}{s_{i+1}} \right\rbrace,
	\]
with the conventions $s_0=1$, $e_0=0$, and $\asc(\e)\coloneqq\#\Asc(\e)$. 
With these definitions, we can give the explicit formulation for the Ehrhart $h^\ast$-polynomials.	
\begin{theorem}[{\cite[Theorem 8]{SavageSchuster}}]
For a given $\s\in\Zpos^n$, 
	\[
	h^\ast(\Pbn,z)=\sum_{\e\in\Ibn}z^{\asc(e)}.
	\]
\end{theorem}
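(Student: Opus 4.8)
The plan is to move from the polytope to the set $L_n^{(\s)}$ of $\s$-lecture hall partitions and to reduce the assertion to a single weight-preserving bijection, following the strategy of Savage and Schuster. Since $t$ and all the $\lambda_i$ are integers, the inequality $\lambda_n/s_n\le t$ is equivalent to $\lceil \lambda_n/s_n\rceil\le t$, whence
\[
i(\Pbn,t)=\#\{\boldsymbol\lambda\in L_n^{(\s)}:\lceil \lambda_n/s_n\rceil\le t\}\qquad(t\ge 0).
\]
Forming the Ehrhart series and interchanging the two summations (legitimate because for each value $v$ only finitely many $\boldsymbol\lambda\in L_n^{(\s)}$ satisfy $\lceil \lambda_n/s_n\rceil=v$, since this bounds each $\lambda_i$), and using $i(\Pbn,0)=1$, I get
\[
\frac{h^\ast(\Pbn,z)}{(1-z)^{n+1}}=\sum_{t\ge 0}i(\Pbn,t)z^t=\frac{1}{1-z}\sum_{\boldsymbol\lambda\in L_n^{(\s)}}z^{\lceil \lambda_n/s_n\rceil},
\]
so that $h^\ast(\Pbn,z)=(1-z)^n\sum_{\boldsymbol\lambda\in L_n^{(\s)}}z^{\lceil \lambda_n/s_n\rceil}$. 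It therefore suffices to prove the identity $\sum_{\boldsymbol\lambda\in L_n^{(\s)}}z^{\lceil \lambda_n/s_n\rceil}=(1-z)^{-n}\sum_{\e\in\Ibn}z^{\asc(\e)}$.

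For this I would construct a bijection $L_n^{(\s)}\longleftrightarrow\Ibn\times\Znn^{n}$ via \emph{ceiling remainders}. Given $\boldsymbol\lambda\in L_n^{(\s)}$, set $b_i:=\lceil \lambda_i/s_i\rceil$ and $e_i:=s_ib_i-\lambda_i$; then $e_i\in\{0,1,\dots,s_i-1\}$, i.e.\ $\e=(e_1,\dots,e_n)\in\Ibn$, and $\lambda_i/s_i=b_i-e_i/s_i$ with $e_i/s_i\in[0,1)$. Adopting the conventions $b_0:=0$ and $e_0/s_0:=0$, the defining inequalities of $L_n^{(\s)}$ translate, one by one, into
\[
b_{i+1}-b_i\ \ge\ \big[\,e_i/s_i<e_{i+1}/s_{i+1}\,\big]\ =\ \big[\,i\in\Asc(\e)\,\big]\qquad(0\le i\le n-1);
\]
indeed, since $e_i/s_i,e_{i+1}/s_{i+1}\in[0,1)$, the chain inequality $\lambda_i/s_i\le\lambda_{i+1}/s_{i+1}$ forces $b_{i+1}-b_i\ge 1$ exactly when $i$ is an ascent and merely $b_{i+1}-b_i\ge 0$ otherwise, and the boundary inequality $0\le\lambda_1/s_1$ is precisely the $i=0$ instance. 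Recording the slacks $\kappa_i:=(b_{i+1}-b_i)-[\,i\in\Asc(\e)\,]\in\Znn$ gives $\boldsymbol\kappa\in\Znn^{n}$, and $\boldsymbol\lambda\mapsto(\e,\boldsymbol\kappa)$ is a bijection with inverse $b_0=0$, $b_{i+1}=b_i+[\,i\in\Asc(\e)\,]+\kappa_i$, $\lambda_i=s_ib_i-e_i$ (one checks the reconstructed point indeed lies in $L_n^{(\s)}$). Telescoping yields
\[
\lceil \lambda_n/s_n\rceil=b_n=\sum_{i=0}^{n-1}\big(\,[\,i\in\Asc(\e)\,]+\kappa_i\,\big)=\asc(\e)+\kappa_0+\cdots+\kappa_{n-1},
\]
hence $\sum_{\boldsymbol\lambda}z^{\lceil\lambda_n/s_n\rceil}=\big(\sum_{\e\in\Ibn}z^{\asc(\e)}\big)\big(\sum_{k\ge 0}z^k\big)^{n}=(1-z)^{-n}\sum_{\e\in\Ibn}z^{\asc(\e)}$, which is the required identity; substituting back gives the theorem.

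The crux is the bijection, and more precisely making the weight come out as $\asc(\e)$ itself: the analogous decomposition with floors in place of ceilings produces a genuinely different (though equidistributed) descent-type statistic on $\Ibn$, so it is the choice of ceiling remainder, together with careful bookkeeping of the $i=0$ boundary inequality against the convention $s_0=1$, $e_0=0$, that makes the ascent set appear directly; checking that the map is well defined and invertible — in particular that the ceiling remainders range freely over $\prod_i\{0,\dots,s_i-1\}$ and that the reconstructed partition is monotone — is then routine. An essentially equivalent, finite argument uses the half-open fundamental parallelepiped of $\Pbn$, whose vertices are $\mathbf 0$ and $(0,\dots,0,s_i,s_{i+1},\dots,s_n)$: its lattice points at height $m$ are parametrized by integer tuples $(a_1,\dots,a_n)$ with $a_0:=0$, $0\le a_j/s_j-a_{j-1}/s_{j-1}<1$ for all $j$, and $\lceil a_n/s_n\rceil=m$; applying the same ceiling-remainder bijection to these tuples forces every slack $\kappa_i$ to vanish (the strict upper bounds $<1$ leave no room), so the tuples biject with $\Ibn$ and the height equals $\asc(\e)$, giving $h^\ast_m(\Pbn)=\#\{\e\in\Ibn:\asc(\e)=m\}$ directly.
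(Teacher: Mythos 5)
The paper does not supply its own proof of this statement; it is quoted verbatim from Savage and Schuster, so there is no in-paper argument to compare against. Your proof is nonetheless correct and is, up to presentation, the Savage--Schuster argument itself: the ceiling-remainder map $\boldsymbol\lambda\mapsto(b_i,e_i)$ with $b_i=\lceil\lambda_i/s_i\rceil$, $e_i=s_ib_i-\lambda_i$, the reduction of the chain inequalities to $b_{i+1}-b_i\geq[\,i\in\operatorname{Asc}(\e)\,]$ (including the $i=0$ boundary, which is handled correctly by the convention $s_0=1$, $e_0=0$), and the factorization of $\sum_{\boldsymbol\lambda\in L_n^{(\s)}}z^{\lceil\lambda_n/s_n\rceil}$ into $(1-z)^{-n}\sum_{\e}z^{\asc(\e)}$ via the free slack variables $\kappa_i$ is exactly their approach. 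The equivalence of $b_{i+1}-b_i\geq e_{i+1}/s_{i+1}-e_i/s_i$ with the stated integer threshold, which you justify by the bound $|e_{i+1}/s_{i+1}-e_i/s_i|<1$, is the crucial step and you treat it correctly. Your closing remark, giving a finite version of the argument on the half-open fundamental parallelepiped, is also sound: the defining $\eta_j\in[0,1)$ translate precisely into $0\le a_j/s_j-a_{j-1}/s_{j-1}<1$ and $\lceil a_n/s_n\rceil=m$, which kills all slacks and recovers the height-$m$ count directly; this is in fact the same parametrization the paper independently develops for its Lemma~4.3 correspondence between height-$k$ parallelepiped points and inversion sequences with $k$ ascents.
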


The polynomials $h^\ast(\Pbn,z)$ are known as the \emph{$\s$-Eulerian polynomials}, because they generalize the classical Eulerian polynomials.
Let $\mathfrak{S}_n$ denote the symmetric group of $[n]$.
Given $\pi=\pi_1\pi_2\cdots\pi_n \in \mathfrak{S}_n$, recall that the \emph{descent statistic} of $\pi$ is $\des(\pi)=\#\{i\in [n-1] \ : \pi_i>\pi_{i+1}\}$.
This statistic on permutations gives rise to one definition of the classical \emph{Eulerian polynomial}
	\[
	A_n(z)\coloneqq\sum_{\pi\in\mathfrak{S}_n} z^{\des(\pi)}.
	\]
In the special case of $\s=(1,2,\ldots,n)$, we have
	\[
	h^\ast( \Pb_n^{(1,2,\ldots,n)},z)= \sum_{\e\in\Ib_n^{(1,2,\ldots,n)}}z^{\asc(e)}=\sum_{\pi\in\mathfrak{S}_n} z^{\des(\pi)}=A_n(z).
	\]	
The $\s$-Eulerian polynomials are known to be real-rooted and, hence, unimodal \cite{SavageVisontai}.	

In recent years, $\s$-lecture hall polytopes have been the subject of much additional study (see, e.g,.  \cite{HibiOlsenTsuchiya-SelfDual,PensylSavage-Wreath,PensylSavage-Rational,SavageViswanathan-1/kEulerian}). Of particular interest are algebraic and geometric structural results such as Gorenstein and IDP.
The second author along with Hibi and Tsuchiya in \cite{HibiOlsenTsuchiya-LHPGorensteinIDP} provide some Gorenstein results in particular circumstances. 
Additionally, the following theorem for IDP holds.

\begin{theorem}[{\cite[Theorem 2.1]{BrandenSolus}}]
$\Pbn$ has the IDP.
\label{IDP}
\end{theorem}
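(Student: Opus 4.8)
The plan is to pass to an explicit Artinian reduction of $k[\Pbn]$, express levelness as the condition that its socle is concentrated in a single degree, observe that this socle is spanned by monomials, and transport the resulting combinatorial condition to $\s$-inversion sequences.

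Set $\Rc := k[\Pbn]$. By Hochster's theorem $\Rc$ is Cohen--Macaulay of Krull dimension $n+1$, and by Theorem~\ref{IDP} the polytope $\Pbn$ has the IDP, so $\Rc$ is standard graded. Let $v_0 = (0,\dots,0), v_1,\dots,v_n$ be the vertices of $\Pbn$, put $\theta_i := \x^{v_i}y \in \Rc_1$, and let $\Pi := \Pi_{\Pbn}$ be the half-open fundamental parallelepiped. Since every lattice point of $\cone(\Pbn)$ is uniquely the sum of a lattice point of $\Pi$ and a $\Znn$-combination of the $(v_i,1)$, the quotient $A := \Rc/(\theta_0,\dots,\theta_n)$ is the monomial algebra with $k$-basis $\{\x^g y^{\operatorname{ht}(g)} : g \in \Pi\cap\Z^{n+1}\}$, graded by height, whose multiplication is ``add exponents, and set the product to zero as soon as it uses a vertex generator''. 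In particular $A$ is Artinian, $\{\theta_0,\dots,\theta_n\}$ is a homogeneous system of parameters, and $\dim_k A_k = \#\{g\in\Pi\cap\Z^{n+1} : \operatorname{ht}(g) = k\} = h_k^\ast(\Pbn) = \#\Ib_{n,k}^{(\s)}$, the last equality by \cite[Theorem~8]{SavageSchuster}; thus $A$ is concentrated in degrees $0,1,\dots,r$.

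By the socle criterion for levelness recalled in Section~\ref{sec:Backgroup:Gorenstein} (\cite[Ch.~III, Prop.~3.2]{StanleyGreenBook}), $\Rc$ is level if and only if $\soc(A)$ lies in a single degree. Since $A_r \subseteq \soc(A)$ and $1 \notin \soc(A)$ when $r \geq 1$, this holds if and only if $\soc(A)_k = 0$ for all $1 \leq k < r$ (the case $r=0$ is trivial: then $\Pbn$ is a unimodular simplex, hence Gorenstein, and the stated condition is vacuous). Because $A$ is a monomial algebra and $A$ is standard graded, an element lies in $\soc(A)$ iff it is annihilated by $A_1$, and for a fixed degree-one basis monomial $\x^h y$ the exponents $g+h$ are distinct as $g$ varies, so no cancellation occurs in the products $\x^g y^k\cdot \x^h y$; hence $\soc(A)$ is spanned by monomials, and $\soc(A)_k = 0$ if and only if for every $g \in \Pi\cap\Z^{n+1}$ of height $k$ there is some $h\in\Pi\cap\Z^{n+1}$ of height $1$ with $g+h \in \Pi$ (necessarily of height $k+1$).

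It remains to recast this in terms of inversion sequences. Let $\Xi\colon \Ibn \to \Pi\cap\Z^{n+1}$ be the bijection underlying the $h^\ast$-formula, so $\operatorname{ht}(\Xi(\e)) = \asc(\e)$ and $\Xi$ restricts to bijections $\Ib_{n,k}^{(\s)}\xrightarrow{\sim}\{g\in\Pi\cap\Z^{n+1}:\operatorname{ht}(g)=k\}$. The heart of the argument is that $\Xi$ is compatible with the two partial additions: for $\e\in\Ib_{n,k}^{(\s)}$ and $\e'\in\Ib_{n,1}^{(\s)}$,
\[
\e+\e'\in\Ib_{n,k+1}^{(\s)} \iff \Xi(\e)+\Xi(\e')\in\Pi,
\]
and when this holds $\Xi(\e+\e') = \Xi(\e)+\Xi(\e')$. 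Granting this, the socle condition above is precisely the statement of the theorem. The main obstacle is therefore this compatibility lemma; I would prove it by writing $\Xi$ out coordinatewise from the staircase description of $\s$-inversion sequences (the $i$-th coordinate of $\Xi(\e)$ being $e_i$ plus a multiple of $s_i$ read off from the partial-ascent data of $\e$), checking that its image is $\Pi\cap\Z^{n+1}$ with height $\asc$, and then observing that if $\e+\e'$ stays in $\Ibn$ with $\asc(\e+\e')=\asc(\e)+\asc(\e')$ then no ``carry'' occurs in the formula, so $\Xi$ is additive there, while conversely $\Xi(\e)+\Xi(\e')\in\Pi$ forces $\e+\e'$ coordinatewise into range with ascent counts adding. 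The other ingredients --- the structure of $A$, the monomial-socle computation, and Stanley's criterion --- are routine.
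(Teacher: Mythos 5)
Your proposal does not prove the statement at hand. The statement in question is Theorem~\ref{IDP}, that $\Pbn$ has the integer-decomposition property, which in the paper is a citation to Br\"{a}nd\'{e}n and Solus rather than something proved in the paper itself. Your argument instead proves (a version of) the level characterization, Theorem~\ref{LevelChar}: it passes to the Artinian reduction $A = k[\Pbn]/(\theta_0,\dots,\theta_n)$, identifies $\soc(A)$ with certain monomials supported on the fundamental parallelepiped, and unwinds the socle condition in terms of $\s$-inversion sequences. There is no IDP content in it. Moreover, you explicitly invoke Theorem~\ref{IDP} as an ingredient (``by Theorem~\ref{IDP} the polytope $\Pbn$ has the IDP, so $\Rc$ is standard graded''), so read as a purported proof of the IDP the argument would be circular.

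A proof of Theorem~\ref{IDP} has nothing to do with the canonical module or the socle of a quotient of $k[\Pbn]$. Any genuine proof must produce, for each $t$ and each lattice point of $t\Pbn$, a decomposition of that point into a sum of $t$ lattice points of $\Pbn$. This is what Br\"{a}nd\'{e}n and Solus do: they show that every lecture hall \emph{order} polytope has the IDP, and $\Pbn$ falls into this class; the paper also notes that the monotone $\s$ case had been handled earlier by Hibi, Olsen, and Tsuchiya. That is the kind of combinatorial decomposition argument you would need here, not an analysis of $\soc(A)$.
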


A proof for the case of monotonic $\s$-sequences was given by the second author with Hibi and Tsuchiya in \cite{HibiOlsenTsuchiya-LHPGorensteinIDP} which  Br\"{a}nd\'{e}n and Solus \cite{BrandenSolus} show can be extended to any $\s$ when they prove that all \emph{$\s$-lecture hall order polytopes} have the IDP. Moreover, in \cite[Conjecture 5.4]{BraendenLeander} it is conjectured that for any $\s$, $\Pbn$ possesses a regular, unimodular triangulation.

For a given $\s$, the \emph{$\s$-lecture hall cone} is defined to be
	\[
	\Cc_n^{(\s)}\coloneqq\left\lbrace \boldsymbol\lambda\in \R^n \, : \, 0\leq \frac{\lambda_1}{s_1}\leq\frac{\lambda_2}{s_2}\leq \cdots\leq \frac{\lambda_n}{s_n} \right\rbrace,
	\]
and whose integer points are exactly the $\s$-lecture hall partitions. These objects are also related to $\s$-lecture hall polytopes in that $\Cc_n^{(\s)}$ arises as the vertex cone of $\Pbn$ at the origin $(0,\ldots,0)$. 
It is important to realize that $\Cc_n^{(\s)}$ is not the same object as $\cone(\Pbn)$. In fact, $\Cc_n^{(\s)}$ is the image of the map  $q_{\mathbf 0}~\colon~\cone(\Pbn)~\to~\R^n$, $(\x,h) \mapsto \x$, where $\x \in h \Pbn$.
The $\s$-lecture hall cones have been studied extensively (see, e.g., \cite{BeckEtAl-GorensteinLHC,BeckEtAl-TriangulationsLHC,Olsen-HilbertBases}) and the following Gorenstein results for the $\s$-lecture hall cones are particularly of interest for our purposes.

\begin{theorem}[{\cite[Corollary 2.6]{BeckEtAl-GorensteinLHC}},  {\cite[Proposition 5.4]{BME-LHP2}}] \label{conesGor}
For a positive integer sequence $\s$, the $\s$-lecture hall  cone $\Cc_n^{(\s)}$ is Gorenstein if and only if there exists some $\c \in \Z^n$ satisfying
	\[
	c_j s_{j-1}=c_{j-1}s_j+\gcd(s_{j-1},s_{j})
	\]
for $j>1$, with $c_1=1$. 	
\end{theorem}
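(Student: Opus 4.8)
The plan is to reduce the statement to a standard criterion for Gorensteinness of semigroup algebras of rational cones and then to run an explicit facet computation for $\Cc_n^{(\s)}$.

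First I would invoke the general criterion (see, e.g., \cite[\S 6]{BrunsGubeladze}): if $C\subset\R^n$ is a full-dimensional pointed rational cone with irredundant description
	\[
	C=\{\boldsymbol\lambda\in\R^n \, : \, \langle\u_j,\boldsymbol\lambda\rangle\geq 0,\ j=1,\ldots,m\},
	\]
where each $\u_j\in\Z^n$ is the primitive inner facet normal, then $k[C]$ is Gorenstein if and only if there is a lattice point $\c\in\Z^n$ with $\langle\u_j,\c\rangle=1$ for every $j$; in that case $\c$ is the unique interior lattice point with $\c+(C\cap\Z^n)=C^{\circ}\cap\Z^n$. The ``if'' direction follows at once from Stanley's description $\omega_{k[C]}=k[C^{\circ}\cap\Z^n]$ together with the fact that, the $\u_j$ being primitive, $C^{\circ}\cap\Z^n=\{\boldsymbol\lambda\in\Z^n \, : \, \langle\u_j,\boldsymbol\lambda\rangle\geq 1\text{ for all }j\}$; translating by such a $\c$ exhibits this set as the principal monomial ideal $\c+(C\cap\Z^n)$, so $\omega_{k[C]}$ is cyclic. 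For ``only if'' one uses that Gorenstein forces $\omega_{k[C]}$ to be cyclic, hence (being a monomial ideal) principal on a monomial $x^{\c}$, so $C^{\circ}\cap\Z^n=\c+(C\cap\Z^n)$; this immediately gives $\langle\u_j,\c\rangle\geq 1$, and the reverse inequality is the one delicate point. I would handle it by producing a lattice point $\boldsymbol y$ with $\langle\u_{j_0},\boldsymbol y\rangle=-1$ and $\langle\u_j,\boldsymbol y\rangle\geq 0$ for $j\neq j_0$ (translate a lattice point lying in the relative interior of the facet $F_{j_0}$ by a vector of $\u_{j_0}$-value $-1$): if $\langle\u_{j_0},\c\rangle\geq 2$, then $\c+\boldsymbol y\in C^{\circ}\cap\Z^n=\c+(C\cap\Z^n)$, forcing $\boldsymbol y\in C$, which contradicts $\langle\u_{j_0},\boldsymbol y\rangle<0$.

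Next I would apply this to $C=\Cc_n^{(\s)}$. Here I would first record that
	\[
	\Cc_n^{(\s)}=\left\{\boldsymbol\lambda\in\R^n \, : \, \lambda_1\geq 0,\ s_i\lambda_{i+1}-s_{i+1}\lambda_i\geq 0\ \ (i=1,\ldots,n-1)\right\}
	\]
is an irredundant description: the cone is pointed and $n$-dimensional and has exactly these $n$ facets, so it is simplicial (consistently with $\Cc_n^{(\s)}$ being the vertex cone of the simplex $\Pbn$ at the origin). Writing $g_i=\gcd(s_i,s_{i+1})$ and letting $e_1,\dots,e_n$ be the standard basis of $\Z^n$, the primitive inner facet normals are $\u_0=e_1$ and
	\[
	\u_i=\tfrac{1}{g_i}\bigl(s_i\,e_{i+1}-s_{i+1}\,e_i\bigr),\qquad i=1,\ldots,n-1 .
	\]
The system $\langle\u_0,\c\rangle=1$, $\langle\u_i,\c\rangle=1$ $(i=1,\ldots,n-1)$ then reads $c_1=1$ and $s_i c_{i+1}-s_{i+1}c_i=g_i$, i.e., after reindexing by $j=i+1$, $c_j s_{j-1}=c_{j-1}s_j+\gcd(s_{j-1},s_j)$ for $j>1$. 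So the general criterion specializes to exactly the claimed characterization.

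The computational ingredients — primitivity of the $\u_i$, irredundancy of the facet description, the reindexing — are routine. The only genuinely load-bearing input is the general cone criterion, and within it the sole subtlety is the forward direction, namely verifying that the Gorenstein point lies on each hyperplane $\langle\u_j,\boldsymbol\lambda\rangle=1$ and not merely in the open half-spaces $\langle\u_j,\boldsymbol\lambda\rangle>0$; the lattice-point construction sketched above is how I would settle it, although for the statement exactly as displayed one may simply cite \cite[Cor.~2.6]{BeckEtAl-GorensteinLHC} and \cite[Prop.~5.4]{BME-LHP2}.
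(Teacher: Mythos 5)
This theorem is quoted in the paper directly from \cite[Cor.~2.6]{BeckEtAl-GorensteinLHC} and \cite[Prop.~5.4]{BME-LHP2} without proof, and your reconstruction is correct and follows essentially the same route as those sources and as the machinery the paper itself deploys later (Lemma~\ref{GorLemma} plus the facet-normal computation in the proof of Theorem~\ref{GorensteinChar}): the Gorenstein-point criterion applied to the primitive normals $e_1$ and $\tfrac{1}{\gcd(s_i,s_{i+1})}\bigl(s_i e_{i+1}-s_{i+1}e_i\bigr)$ yields exactly the stated recurrence with $c_1=1$. The only small imprecision is in your ``only if'' step: translating a single lattice point of the relative interior of $F_{j_0}$ by a vector of $\u_{j_0}$-value $-1$ need not keep the other inequalities nonnegative, so you should first replace that facet point by a sufficiently large integer multiple of itself before translating; with that standard adjustment the argument is complete.
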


Moreover, in the case of $\s$-sequences where $\gcd(s_{i-1},s_{i})=1$ holds for all $i$, we have a refinement of this theorem. 
We say that $\s$ is \emph{$\u$-generated} by a sequence $\u$ of positive integers if $s_2=u_1s_1-1$ and $s_{i+1}=u_is_i-s_{i-1}$ for $i>1$. 

\begin{theorem}[{\cite[Theorem 2.8]{BeckEtAl-GorensteinLHC}}, {\cite[Proposition 5.5]{BME-LHP2}}]\label{conesGorPairwise}
Let $\s=(s_1,\ldots,s_n)$ be a sequence of positive integers  such that $\gcd(s_{i-1},s_{i})=1$ for $1\leq i<n$. Then $\Cc_n^{(\s)}$ is Gorenstein if and only if $\s$ is $\u$-generated by some sequence $\u=(u_1,u_2,\ldots,u_{n-1})$ of positive integers. When such a sequence exists, the Gorenstein point $\c$ for $\Cc_n^{(\s)}$ is defined by $c_1=1$, $c_2=u_1$, and for $2\leq i<n$, $c_{i+1}=u_ic_i-c_{i-1}$.
\end{theorem}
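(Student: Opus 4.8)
The plan is to reduce everything to Theorem~\ref{conesGor} and then understand precisely when its recurrence admits an integer solution. Taking $s_0=1$, the hypothesis makes $\gcd(s_{j-1},s_j)=1$ for every relevant consecutive pair, so the recurrence in Theorem~\ref{conesGor} reads $c_j s_{j-1}=c_{j-1}s_j+1$; it is convenient to also set $s_0:=1$ and $c_0:=0$, so that this identity additionally holds at $j=1$ (where it is just $1\cdot1-0\cdot s_1=1$). Thus a Gorenstein point is exactly an integer vector $(c_1,\dots,c_n)$ with $c_1=1$ and $c_j s_{j-1}-c_{j-1}s_j=1$ for $1\le j\le n$, and such a vector is automatically unique, since $c_j$ is forced from $c_{j-1}$ by $c_j s_{j-1}=c_{j-1}s_j+1$.

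For the ``if'' direction I would take $\s$ to be $\u$-generated, define $\c$ by $c_1=1$, $c_2=u_1$, and $c_{i+1}=u_ic_i-c_{i-1}$ (equivalently $c_{i+1}=u_ic_i-c_{i-1}$ for all $i\ge1$ using $c_0=0$), and verify by a one-line induction that the bilinear quantity $c_{j+1}s_j-c_js_{j+1}$ does not depend on $j$: substituting $s_{j+1}=u_js_j-s_{j-1}$ and $c_{j+1}=u_jc_j-c_{j-1}$ collapses it to $c_js_{j-1}-c_{j-1}s_j$, and at $j=0$ it equals $c_1s_0-c_0s_1=1$. Hence this $\c$ lies in $\Z^n$ and satisfies the recurrence of Theorem~\ref{conesGor}, so $\Cc_n^{(\s)}$ is Gorenstein with $\c$ as Gorenstein point; this simultaneously records that $\u$-generated sequences have coprime consecutive terms throughout.

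For the ``only if'' direction I would start from a Gorenstein point $\c\in\Z^n$ produced by Theorem~\ref{conesGor} (extended by $c_0=0$) and, for each $1\le i\le n-1$, construct a positive integer $u_i$ with $s_{i+1}=u_is_i-s_{i-1}$. The relation $c_is_{i-1}-c_{i-1}s_i=1$ gives $c_is_{i-1}\equiv1\pmod{s_i}$, and $c_{i+1}s_i-c_is_{i+1}=1$ gives $c_is_{i+1}\equiv-1\pmod{s_i}$; multiplying the second congruence by $s_{i-1}$ and using the first yields $s_{i+1}\equiv-s_{i-1}\pmod{s_i}$, i.e.\ $s_i\mid s_{i+1}+s_{i-1}$. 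Setting $u_i:=(s_{i+1}+s_{i-1})/s_i$ gives a positive integer (numerator and denominator are both positive), so $s_{i+1}=u_is_i-s_{i-1}$, which at $i=1$ reads $s_2=u_1s_1-1$; thus $\s$ is $\u$-generated by $\u=(u_1,\dots,u_{n-1})$, and the asserted form of the Gorenstein point follows from uniqueness together with the computation in the ``if'' direction.

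I expect the only real friction to be the bookkeeping at the two ends of the sequence: one must check that the conventions $s_0=1$, $c_0=0$ turn the $j=1$ instance into a genuine identity, and one should read the coprimality hypothesis as covering every consecutive pair $s_{j-1},s_j$ with $1\le j\le n$ — otherwise the recurrence of Theorem~\ref{conesGor} at $j=n$ carries an extra factor $\gcd(s_{n-1},s_n)$ and $\u$-generation can genuinely fail (e.g.\ $\Cc_2^{(4,2)}$ is Gorenstein but $(4,2)$ is not $\u$-generated). Modulo this, the argument is nothing more than the observation that the three-term recurrences for $(s_j)$ and $(c_j)$ are the two columns of a single $\mathrm{SL}_2(\Z)$ recurrence, with $c_{j+1}s_j-c_js_{j+1}=1$ the conserved determinant.
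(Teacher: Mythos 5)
This statement is quoted background (from Bousquet-M\'elou--Eriksson and Beck et al.) and is not proved in the paper, so there is no internal proof to compare against; your derivation of it from Theorem~\ref{conesGor} is correct and is essentially the standard argument. The conserved-determinant computation $c_{j+1}s_j-c_js_{j+1}=c_js_{j-1}-c_{j-1}s_j$ handles the ``if'' direction (and, as you note, shows $\u$-generated sequences automatically have coprime consecutive terms), and the congruence step $s_{i+1}\equiv -s_{i-1}\pmod{s_i}$, giving $u_i=(s_{i+1}+s_{i-1})/s_i\in\Z_{\geq 1}$, handles the ``only if'' direction; uniqueness of the solution of $c_js_{j-1}=c_{j-1}s_j+1$ with $c_1=1$ then identifies the Gorenstein point with the $\u$-recurrence, as claimed. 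Your caveat about the hypothesis is also well taken: as printed, ``$\gcd(s_{i-1},s_i)=1$ for $1\leq i<n$'' misses the pair $(s_{n-1},s_n)$, and your example $\Cc_2^{(4,2)}$ (Gorenstein via $\c=(1,1)$, yet $(4,2)$ not $\u$-generated) shows the result genuinely needs coprimality of every consecutive pair, which is how the cited sources state it; with that reading your argument is complete.
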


%%%%%%%%%%%%%%%%%%%%%%%%%%%%%%%%%%%%%%%%%%%%%%%%%%%%%%%%%%%%%%%%%%%%%%%%%%%%%%%%

%%%%%%%%%%%%%%%%%%%%%%%%%%%%%%%%%%%%%%%%%%%%%%%%%%%%%%%%%%%%%%%%%%%%%%%

\section{Gorenstein $\s$-lecture hall polytopes}
\label{sec:Gorenstein}
In this section, we will give a characterization of Gorenstein $\s$-lecture hall polytopes. 
To give such a classification, we will analyze the structure of $\cone(\Pbn)$.
The following lemma gives a halfspace inequality description of this cone:

\begin{lemma}
\label{lem:H-DescriptionConeOverPolytope}
With the notation from above, 
\begin{equation*}
\cone(\Pbn) = \left \{\boldsymbol\lambda \in \R^{n+1} \colon A\boldsymbol\lambda^t \geq \boldsymbol 0 \right\},
\end{equation*}
where
\begin{equation*}A=
\begin{pmatrix}
\frac{1}{s_1} & 0 &0& \dots & 0 \\
\frac{-1}{s_1} & \frac{1}{s_2}&0 & \dots &0\\
\vdots & \vdots & \vdots & \vdots & \vdots\\
0 & \dots & \frac{-1}{s_{n-1}} & \frac{1}{s_n} & 0 \\
0 & \dots & 0 & \frac{-1}{s_n} & 1\\
\end{pmatrix}.
\end{equation*}
Moreover, this cone is simplicial.
\end{lemma}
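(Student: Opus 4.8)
The plan is to show that the matrix $A$ is invertible with an explicit inverse whose columns are precisely the generating rays of $\cone(\Pbn)$; both the half-space description and simpliciality then drop out immediately, and the only computation involved is evaluating $A$ on each generator.

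By the vertex description of $\Pbn$ recorded in Section~\ref{sec:Background:LectureHall}, the cone $\cone(\Pbn)$ is generated by the $n+1$ rays $g_0 := (\boldsymbol 0, 1)$ and, for $1 \le i \le n$, $g_i := (0,\dots,0,s_i,s_{i+1},\dots,s_n,1) \in \R^{n+1}$ (with $i-1$ leading zeros). The key step is to compute $A g_i$ for every $i$. For $g_0$ the first $n$ coordinates vanish, so the first $n$ rows of $A g_0$ are $0$, while the last row gives $-\tfrac{1}{s_n}\cdot 0 + 1 = 1$; hence $A g_0 = \mathbf{e}_{n+1}$. For $1\le i\le n$, the coordinates of $g_i$ are $0$ in positions $<i$, equal to $s_j$ in positions $i\le j\le n$, and $1$ in the last position. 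Plugging this into the rows of $A$, one checks that every row evaluates to $0$ except for row $i$: for $2\le i\le n$ a zero coordinate in position $i-1$ meets $s_i$ in position $i$, giving $\tfrac{1}{s_i}s_i = 1$, while for $i=1$ the first row gives $\tfrac{1}{s_1}s_1 = 1$ directly; all other rows telescope to $0$ (e.g.\ $-\tfrac{s_{k-1}}{s_{k-1}}+\tfrac{s_k}{s_k}=0$). Thus $A g_i = \mathbf{e}_i$ for $1\le i\le n$. This bookkeeping is the only real work in the proof; the boundary case $i=1$ and isolating the single surviving row are the only places that need a little care, but there is no substantive difficulty.

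It follows that the matrix $B$ whose columns are $g_1, g_2, \dots, g_n, g_0$ (in that order) satisfies $AB = I_{n+1}$, so $A$ is invertible with $A^{-1} = B$. In particular the rays $g_0, g_1, \dots, g_n$ are linearly independent, which is exactly the assertion that $\cone(\Pbn)$ is a full-dimensional simplicial cone. For the half-space description, invertibility of $A$ gives $\{\boldsymbol\lambda : A\boldsymbol\lambda \ge \boldsymbol 0\} = \{A^{-1}\boldsymbol\mu : \boldsymbol\mu \in \R^{n+1}_{\ge 0}\}$, and expanding $A^{-1}\boldsymbol\mu = \sum_j \mu_j (A^{-1}\mathbf{e}_j)$ identifies this set with the conical hull of the columns of $A^{-1}$. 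Hence
\[
\{\boldsymbol\lambda \in \R^{n+1} : A\boldsymbol\lambda \ge \boldsymbol 0\} = \cone(g_1,\dots,g_n,g_0) = \cone(g_0,g_1,\dots,g_n) = \cone(\Pbn),
\]
which is the claimed description.

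A slightly different route, if one prefers to avoid writing down $A^{-1}$: unravelling $A\boldsymbol\lambda \ge \boldsymbol 0$ shows it is the cone $0 \le \lambda_1/s_1 \le \cdots \le \lambda_n/s_n \le \lambda_{n+1}$; checking directly that each $g_i$ satisfies this chain of inequalities yields $\cone(\Pbn) \subseteq \{A\boldsymbol\lambda \ge \boldsymbol 0\}$, and since each $g_i$ annihilates $n$ of the $n+1$ rows of the invertible matrix $A$, each $g_i$ spans an extreme ray of the simplicial cone $\{A\boldsymbol\lambda \ge \boldsymbol 0\}$; as these are $n+1$ distinct rays and a simplicial $(n+1)$-cone has exactly $n+1$ extreme rays, the two cones coincide. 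Either way, the content is entirely the computation $A g_i = \mathbf{e}_{\sigma(i)}$, and I expect that index check to be the only mild obstacle.
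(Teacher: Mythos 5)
Your proof is correct, but it takes a different route from the paper's. The paper works purely with the inequality description: it takes the half-space description $M\boldsymbol\lambda \geq \b$ of $\Pbn$ itself and homogenizes it, observing that at height $\lambda_{n+1}$ the cone over the polytope is cut out by $M\boldsymbol\lambda \geq \lambda_{n+1}\b$, which after moving the right-hand side over is exactly $A\boldsymbol\lambda \geq \boldsymbol 0$; simpliciality is then essentially left as an implicit consequence. You instead start from the vertex description, take the generators $g_0=(\boldsymbol 0,1)$ and $g_i=(0,\dots,0,s_i,\dots,s_n,1)$ prescribed by the paper's definition of $\cone(\Pbn)$, and verify $Ag_i=\mathbf{e}_i$, $Ag_0=\mathbf{e}_{n+1}$, so that $A$ is invertible with $A^{-1}$ having the generators as columns; the identity $\{\boldsymbol\lambda: A\boldsymbol\lambda\geq\boldsymbol 0\}=A^{-1}\bigl(\R^{n+1}_{\geq 0}\bigr)$ then gives both the half-space description and simpliciality at once. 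Your computation checks out (including the telescoping of rows $j>i$ and the boundary cases $i=1$ and the last row), and your argument has the advantage of producing $A^{-1}$ explicitly, proving full-dimensionality and simpliciality rather than asserting them, and identifying the $g_i$ as the extreme rays — which is conceptually aligned with how the cone is used later (as the lecture hall cone for $(s_1,\dots,s_n,1)$). The paper's homogenization argument is shorter and avoids any matrix computation, at the cost of leaving the simplicial claim to the reader.
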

\begin{proof}
This directly follows from the  halfspace description of $\Pbn$. Assume that $\Pbn = \{\lambda \colon M\boldsymbol\lambda^t \geq \b\}$, where $\b = (0,0,\dots,0,1)^t$. Then on height $\lambda_{n+1}$, we have $M\boldsymbol\lambda^t \geq \lambda_{n+1}\b$. The statement now follows.  
\end{proof}

\begin{proof}[Proof of Theorem ~\ref{thm:NewGorensteinChar}]
Lemma~~\ref{lem:H-DescriptionConeOverPolytope} implies that $\cone\left(\Pbn\right) = \mathcal{C}^{(s_1,\dots,s_n,1)}_{n+1}$, i.e., $\cone\left(\Pbn\right)$ is an $\s$-lecture hall cone itself, which also appears implicitly in~\cite[Lemma 2.3]{LiuStanley}. Now the claim follows from\rem{ one can deduce Theorem~\ref{thm:NewGorensteinChar} as it is implicit from} Theorem~\ref{conesGor}.\rem{ and {\cite[Lemma 2.1]{LiuStanley}}.}
\end{proof}

%\begin{proof}[Proof of Theorem~\ref{thm:NewGorensteinChar}]
%Let $\s = (s_1,s_2,\dots,s_n)\in \Z^{n}_{\geq 1}$. By Lemma~\ref{lem:H-DescriptionConeOverPolytope}, we can express $\cone\left(\Pbn\right)$ as 
%\[
%\cone\left(\Pbn\right) = \left\{ \boldsymbol\lambda \in \R^{n+1}\colon 0 \leq \frac{\lambda_1}{s_1} \leq \dots \leq \dots \leq \frac{\lambda_n}{s_n} \leq \lambda_{n+1} \right\}.
%\]
%Now one notices that $\cone\left(\Pbn\right) = \mathcal{C}^{(s_1,\dots,s_n,1)}_{n+1}$, or in other words, that the cone over $\Pbn$ equals the $\tilde{\s}$-lecture hall cone, where $\tilde{\s}: = (s_1,s_2,\dots,s_n,1)$. By Theorem~\ref{conesGor}, this $\tilde{\s}$-lecture hall cone is Gorenstein if and only if there exist $\c\in\Z^{n+1}$ satisfying
%	\[
%	c_js_{j-1}=c_{j-1}s_j+\gcd(s_{j-1},s_{j})
%	\]
%for $j>1$  and 
%\[
%c_{n+1}s_n = 1 + c_n.
%\]
%with $c_1=1$.
%\end{proof}

In the interest of proving the alternative characterization given by Theorem~\ref{GorensteinChar}, we now recall a technical lemma necessary for the proof.

\begin{lemma}[{\cite[Lemma 2.5]{BeckEtAl-GorensteinLHC}}]
\label{GorLemma}
Let $\Cc=\{\boldsymbol\lambda\in\R^n \, : \, A\boldsymbol\lambda\geq \boldsymbol0\}$ be a full dimensional simplicial polyhedral cone where $A$ is a rational matrix and denote the rows of $A$ as linear functionals $\alpha^1,\ldots, \alpha^n$ on $\R^n$. For $j=1,\ldots,n$, let the projected lattice $\alpha^j(\Z^n)\subset \R$ be generated by the number $q_j\in\Q_{>0}$, so $\alpha^j(\Z^n)=q_j\Z$.
	\begin{enumerate}
	\item The cone $\Cc$ is Gorenstein if and only if there exists $\c\in\Z^n$ such that $\alpha^j(\c)=q_j$ for all $j=1,\ldots,n$.
	\item Define a point $\boldsymbol{\tilde{c}}\in \Cc\cap \Q^n$ by $\alpha^j(\boldsymbol{\tilde{c}})=q_j$ for all $j=1,\ldots,n$. Then $\Cc$ is Gorenstein if and only if $\boldsymbol{\tilde{c}}\in\Z^n$.
	\end{enumerate}
\end{lemma}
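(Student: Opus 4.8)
The plan is to reduce the Gorenstein condition to an arithmetic statement by straightening $\Cc$ into the positive orthant. Since $\Cc$ is full-dimensional and simplicial, $A$ is invertible; define the linear isomorphism $\psi\colon\R^n\to\R^n$ by $\psi(\boldsymbol\lambda)=\bigl(\alpha^1(\boldsymbol\lambda)/q_1,\dots,\alpha^n(\boldsymbol\lambda)/q_n\bigr)$. By the choice of the $q_j$, $\psi$ carries $\Z^n$ onto a full-rank sublattice $\Lambda:=\psi(\Z^n)\subseteq\Z^n$ each of whose coordinate projections $\Lambda\to\Z$ is surjective, and it carries $\Cc$ onto $\R^n_{\geq 0}$ and $\Cc^\circ$ onto $\R^n_{>0}$. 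Thus $\psi$ identifies the normal affine semigroup $\Cc\cap\Z^n$ with $\Lambda\cap\Z^n_{\geq 0}$ and its canonical module $\Cc^\circ\cap\Z^n$ with $\Lambda\cap\Z^n_{\geq 1}$ (all coordinates $\geq 1$). Recalling (as in Section~\ref{sec:Backgroup:Gorenstein}, applied to the normal affine semigroup $\Cc\cap\Z^n$) that $\Cc$ is Gorenstein exactly when $\c+(\Cc\cap\Z^n)=\Cc^\circ\cap\Z^n$ for some $\c\in\Z^n$, and noting that $\alpha^j(\c)=q_j$ for all $j$ is the same as $\psi(\c)=\mathbf 1:=(1,\dots,1)$, part (1) becomes the claim: $\Lambda\cap\Z^n_{\geq 1}$ is a principal module over $\Lambda\cap\Z^n_{\geq 0}$ if and only if $\mathbf 1\in\Lambda$.

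For the ``if'' direction I would verify directly that when $\mathbf 1\in\Lambda$ one has $\Lambda\cap\Z^n_{\geq 1}=\mathbf 1+(\Lambda\cap\Z^n_{\geq 0})$: the inclusion $\supseteq$ is immediate, and for $\subseteq$, any $v\in\Lambda$ with $v\geq\mathbf 1$ satisfies $v-\mathbf 1\in\Lambda$ with nonnegative entries. Pulling back through $\psi^{-1}$ produces the lattice point $\c$ with $\alpha^j(\c)=q_j$ for all $j$, and conversely any such $\c$ has $\psi(\c)=\mathbf 1\in\Lambda$.

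For the ``only if'' direction, suppose $\Lambda\cap\Z^n_{\geq 1}=\c_0+(\Lambda\cap\Z^n_{\geq 0})$; taking the zero element of the semigroup shows $\c_0\in\Lambda$ and $\c_0\geq\mathbf 1$, and it remains to rule out $(\c_0)_k\geq 2$ for any fixed $k$. The key step, which I expect to be the main obstacle, is to construct a lattice point $v\in\Lambda$ with $v\geq\mathbf 1$ but $v_k=1$: such a $v$ lies in $\Lambda\cap\Z^n_{\geq 1}$, yet cannot be written $\c_0+s$ with $s\in\Lambda\cap\Z^n_{\geq 0}$ once $(\c_0)_k\geq 2$, since then $s_k=1-(\c_0)_k<0$. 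To build $v$: choose $v_0\in\Lambda$ with $(v_0)_k=1$, possible since the $k$-th projection $\Lambda\to\Z$ is surjective, and choose $w_0$ in the rank-$(n-1)$ sublattice $\{w\in\Lambda:w_k=0\}$ lying in the relative interior of the corresponding facet of $\R^n_{\geq 0}$, i.e.\ with $(w_0)_k=0$ and $(w_0)_j>0$ for all $j\neq k$; such $w_0$ exists because a full-rank lattice always meets the relative interior of a full-dimensional cone in its linear span. Then $v:=v_0+Nw_0\in\Lambda$ has $v_k=1$ and $v_j\to\infty$ for $j\neq k$, so $v\geq\mathbf 1$ once $N$ is large. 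This contradiction forces $(\c_0)_k=1$ for every $k$, so $\c_0=\mathbf 1\in\Lambda$, completing part (1).

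Part (2) is then immediate: $\boldsymbol{\tilde c}$ is by definition the unique solution of $A\boldsymbol{\tilde c}=(q_1,\dots,q_n)^t$, hence $\boldsymbol{\tilde c}=A^{-1}(q_1,\dots,q_n)^t\in\Q^n$ and lies in $\Cc^\circ$; any $\c\in\Z^n$ with $\alpha^j(\c)=q_j$ for all $j$ must coincide with $\boldsymbol{\tilde c}$, so by part (1) the cone $\Cc$ is Gorenstein precisely when $\boldsymbol{\tilde c}\in\Z^n$.
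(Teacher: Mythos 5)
Your argument is correct, and it is worth noting that the paper itself does not prove this statement at all: it is quoted verbatim from \cite[Lemma 2.5]{BeckEtAl-GorensteinLHC} and used as a black box, so your proposal supplies a self-contained proof where the paper only cites one. Your normalization $\psi(\boldsymbol\lambda)=\bigl(\alpha^1(\boldsymbol\lambda)/q_1,\dots,\alpha^n(\boldsymbol\lambda)/q_n\bigr)$ cleanly reduces everything to the orthant: the Danilov--Stanley description of the canonical module (which the paper records in Section~\ref{sec:Backgroup:Gorenstein} and which holds for any full-dimensional pointed rational cone, as here, since $A$ is invertible) turns the Gorenstein condition into ``$\Lambda\cap\Z^n_{\geq 1}$ is a principal module over $\Lambda\cap\Z^n_{\geq 0}$,'' and your two directions are then elementary lattice arithmetic; this is essentially the same mechanism as in the cited source, just phrased in straightened coordinates rather than directly via lattice distances to the facets. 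Two small points you should make explicit. First, the claim that $\{w\in\Lambda: w_k=0\}$ has rank $n-1$ deserves a line: since $\Lambda\subseteq\Z^n$ is full rank of some index $d$, it contains $d\Z^n$, hence contains $d\Z^{n-1}$ inside the coordinate hyperplane, which gives full rank there (and then, as you say, a full-rank lattice meets the open cone $\{w_k=0,\ w_j>0 \text{ for } j\neq k\}$ because that cone contains arbitrarily large balls of its linear span). Second, when you invoke ``Gorenstein $\Leftrightarrow$ $\c+(\Cc\cap\Z^n)=\Cc^\circ\cap\Z^n$ for some $\c\in\Z^n$,'' you are implicitly using that a cyclic $\Z^n$-graded canonical module over the positively multigraded semigroup ring has a \emph{monomial} generator (graded Nakayama); this is standard and is the convention the paper itself adopts, but a citation or one sentence would close the loop. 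With those additions the proof is complete, and part (2) follows exactly as you state, since $\boldsymbol{\tilde c}=A^{-1}(q_1,\dots,q_n)^t$ is the unique point satisfying the defining equations.
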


We now provide a proof of  Theorem~\ref{GorensteinChar}.

%%%% Begin Comment OUT %%%%%%%
\commentout{
\begin{theorem}\label{GorensteinChar}
Let $\s=(s_1,s_2,\ldots,s_n)$ be a sequence such that there exists some $1\leq i< n$ such that $\gcd(s_{i},s_{i+1})=1$.  Then $\Pbn$ is Gorenstein if and only if there exists $\c,\d\in\Z^n$ satisfying
	\begin{equation}\label{eq:Crecursion}
	c_js_{j-1}=c_{j-1}s_j+\gcd(s_{j-1},s_{j})
	\end{equation}
and 
	\begin{equation}\label{eq:Drecursion}
	d_j\overleftarrow{s_{j-1}}=d_{j-1}\overleftarrow{s_j}+\gcd(\overleftarrow{s_{j-1}},\overleftarrow{s_{j}})
	\end{equation}	
for $j>1$ with $c_1=d_1=1$.	
\end{theorem}}
%%%%%%End Comment Out%%%%%%%%%%%%%

\begin{proof}[Proof of Theorem \ref{GorensteinChar}]
Let us define $\overleftarrow{\s}\rem{\coloneqq(\overleftarrow{s_1},\ldots,\overleftarrow{s_n})}\coloneqq(s_n,s_{n-1},\ldots,s_1)$.
We will verify the following two claims:
\begin{enumerate}
\item[(i)] If $P$ is \emph{any} Gorenstein polytope, then all of its vertex cones are Gorenstein as well. The terms in~(\ref{eq:GorChar}) actually correspond to coordinates of the Gorenstein points of the two vertex cones at $\mathbf 0$ and $\s$.
\item[(ii)] If the vertex cones of $\Pbn$ at $\mathbf 0 $ and $\s$ are both Gorenstein, then $\Pbn$ is Gorenstein. This direction does \emph{not} hold for general lattice polytopes. 
\end{enumerate}

%%%%%%%%%%%%%%%%%%%%% BEGIN COMMENT OUT %%%%%%%%%%%%%%%%%%%%%%
\rem{
Let us introduce $\c \coloneqq (c_1,\dots, c_n)$ and $\d \coloneqq (d_1,\dots,d_n)$ by setting
\begin{equation*}
c_j \coloneqq \frac{\gcd(s_{j-1},s_j)}{s_{j-1}} + \sum_{k=1}^{j-1} \frac{\gcd(s_{k-1},s_k)}{s_{k-1}s_k} \quad \text{and } \quad d_j \coloneqq  \frac{\gcd(s_{n-j+2},s_j)}{s_{n-j+1}} + \sum_{k=1}^{j-1} \frac{\gcd(s_{n-k+2},s_{n-k+1})}{s_{n-k+2}s_{n-k+1}}.
\end{equation*}
Equivalently, $\c$ and $\d$ satisfy the recursions 
	\[
	c_js_{j-1}=c_{j-1}s_j+\gcd(s_{j-1},s_{j})
	\]
and 
	\[
	d_j\overleftarrow{s_{j-1}}=d_{j-1}\overleftarrow{s_j}+\gcd(\overleftarrow{s_{j-1}},\overleftarrow{s_{j}})
	\]	
for $j>1$, where $c_1=d_1=1$.  A quick computations shows that $\c$ in $\d$ satisfy the recursions the terms in ~(\ref{eq:GorChar}) are integers if and only if  if there exist $\c,\d\in\Z^n$ with $c_1=d_1=1$ satisfying
	\[
	c_js_{j-1}=c_{j-1}s_j+\gcd(s_{j-1},s_{j})
	\]
and 
	\[
	d_j\overleftarrow{s_{j-1}}=d_{j-1}\overleftarrow{s_j}+\gcd(\overleftarrow{s_{j-1}},\overleftarrow{s_{j}})
	\]	
for $j>1$. This reformulation has the advantage that there is a geometric interpretation of $\c$ and $\d$ as they will arise as Gorenstein points of tangent cones.

, and define $\overleftarrow{\s}=(\overleftarrow{s_1},\ldots,\overleftarrow{s_n})\coloneqq(s_n,s_{n-1},\ldots,s_1)$. Then $\Pbn$ is Gorenstein if and only if there exist $\c,\d\in\Z^n$ with $c_1=d_1=1$ satisfying
	\[
	c_js_{j-1}=c_{j-1}s_j+\gcd(s_{j-1},s_{j})
	\]
and 
	\[
	d_j\overleftarrow{s_{j-1}}=d_{j-1}\overleftarrow{s_j}+\gcd(\overleftarrow{s_{j-1}},\overleftarrow{s_{j}})
	\]	
for $j>1$.}

%%%%%%%%%%%%%%%%%%%%% END COMMENT OUT %%%%%%%%%%%%%%%%%%%%%%%%%

Let $\Pbn$ be Gorenstein with Gorenstein point $\b \in \cone (\Pbn)\cap \Z^{n+1}$. We will show that there are integer points $\c$ and $\d$ satisfying the recursions 
	\begin{equation}\label{eq:Crecursion}
	c_js_{j-1}=c_{j-1}s_j+\gcd(s_{j-1},s_{j})
	\end{equation}
and 
	\begin{equation}\label{eq:Drecursion}
	d_j\overleftarrow{s_{j-1}}=d_{j-1}\overleftarrow{s_j}+\gcd(\overleftarrow{s_{j-1}},\overleftarrow{s_{j}})
	\end{equation}	
for $j>1$, where $c_1=d_1=1$. Solving  $(\ref{eq:Crecursion})$ and $(\ref{eq:Drecursion})$ for $c_j$ and $d_j$ will then lead to the integrality conditions in  $(\ref{eq:GorChar})$. Since $\b$ is a Gorenstein point, it has lattice distance $1$ to all facets of $\cone(\Pbn)$ by~\cite[Theorem 6.32]{BrunsGubeladze}. For a vertex $\v$, the vertex cone $T_{\v}(\Pbn)$ can be related to $\cone(\Pbn)$ by considering the map $q_\v \colon \cone(\Pbn)\to \R^n$, $(\x,h) \mapsto \x - h\v$, where $\x \in h\Pbn$. It is straightforward to see that $T_{\v}(\Pbn) = \v + q_v(\cone(\Pbn))$, and $T_{\v}(\Pbn)$ is Gorenstein if and only if $q_\v(\cone(\Pbn))$ is Gorenstein. Moreover, as one can quickly verify, $q_\v$ preserves facet distances; if $F$ is a facet of $\cone(\Pbn)$ containing $(\v,1)$ and $(\x,h)$ has facet distance $d$ to $F$, then $q_\v(\x,h)$ has facet distance $d$ to $q_{\v}(F)$. Therefore, $q_\v(\b)$ has lattice distance $1$ to all facets of $q_v(\cone(\Pbn))$ and $q(\b)$ is a Gorenstein point and thus $T_{\v}(\Pbn)$ is Gorenstein for all. Hence, all vertex cones are Gorenstein.  
 
In particular, the vertex cone at the vertex $(0,0,\dots,0)$ is of the form
	\[
	0 \leq \frac{\lambda_1}{s_1} \leq \dots \leq \frac{\lambda_n}{s_n}
	\]
and it is known by Theorem \ref{conesGor} that this cone is Gorenstein if and only if there exists a $\c\in\Z^n$ satisfying $(\ref{eq:Crecursion})$. Furthermore, the map 
\[
x \mapsto- \begin{pmatrix}
0 & 0& \dots & 0 & 1 \\
0 & 0& \dots & 1 & 0 \\
\vdots & \vdots & \vdots & \vdots & \vdots \\
1 & 0 &\dots & 0 & 0 \\
\end{pmatrix}x + \overleftarrow{\s}.
\]
shows that $T_{\s}(\Pbn)$ is unimodularly equivalent to $T_{\boldsymbol 0}(\operatorname{\boldsymbol P}^{(\overleftarrow{\s})}_n)$. Therefore, $T_{\boldsymbol 0}(\operatorname{\boldsymbol P}^{(\overleftarrow{\s})}_n)$ is of the form	
	%\[
	%0 \leq \frac{{\lambda_1}}{s_n} \leq \dots \leq \frac{{\lambda_n}}{s_1}
	%\]
%or alternatively
	\[
	0 \leq \frac{{\lambda_1}}{\overleftarrow{s_1}} \leq \dots \leq \frac{{\lambda_n}}{\overleftarrow{s_n}}
	\]
which is Gorenstein if and only if there exists a $\d\in\Z^n$ satisfying $(\ref{eq:Drecursion})$. Now solving the recursions for $c_j$ and $d_j$ gives
\begin{footnotesize}
\begin{equation*}
c_j = \frac{\gcd(s_{j-1},s_j)}{s_{j-1}} + \sum_{k=1}^{j-1} \frac{\gcd(s_{k-1},s_k)}{s_{k-1}s_k} \quad \text{and } \quad d_j =  \frac{\gcd(s_{n-j+2},s_j)}{s_{n-j+1}} + \sum_{k=1}^{j-1} \frac{\gcd(s_{n-k+2},s_{n-k+1})}{s_{n-k+2}s_{n-k+1}},
\end{equation*}
\end{footnotesize}
which proves the first claim.

Let us assume that all terms in $(\ref{eq:GorChar})$ are integers. We define
\begin{footnotesize}
\begin{equation*}
c_j = \frac{\gcd(s_{j-1},s_j)}{s_{j-1}} + \sum_{k=1}^{j-1} \frac{\gcd(s_{k-1},s_k)}{s_{k-1}s_k} \quad \text{and } \quad d_j =  \frac{\gcd(s_{n-j+2},s_j)}{s_{n-j+1}} + \sum_{k=1}^{j-1} \frac{\gcd(s_{n-k+2},s_{n-k+1})}{s_{n-k+2}s_{n-k+1}},
\end{equation*}
\end{footnotesize}
where $s_0 = s_{n+1}=1$. In particular, $c_1 = d_1 = 1$.
Since all terms in $(\ref{eq:GorChar})$ are assumed to be integers, both $\c = (c_1,\dots,c_n)$ and $\d = (d_1,\dots,d_n)$ are integer points. By definition, $\c$ satisfies recursion $(\ref{eq:Crecursion})$, and  $\d$ satisfies recursion $(\ref{eq:Drecursion})$.
%In addition, we know that for some $i$ $\gcd(s_{i},s_{i+1})=1$. 

To show sufficiency, we employ Lemmata~\ref{GorLemma} and ~\ref{lem:H-DescriptionConeOverPolytope}. 
Since the characterization given in Lemma~\ref{GorLemma} essentially requires finding integer solutions to linear equations, we first deduce some divisibility conditions that will later prove useful.
Note that this gives us the following
	\[
	c_n s_{n-1} = c_{n-1}s_n + \gcd(s_{n-1},s_n),
	\]
and 
	\[
	 d_2 \overleftarrow{s}_1 = d_1 \overleftarrow{s}_2 +\gcd(\overleftarrow{s}_{1},\overleftarrow{s}_n) 
	\]
is equivalent to	
	 \[
	  d_2 s_{n} = d_{1}s_{n-1} + \gcd(s_{n-1},s_n) 
	\]
where $d_1 = 1$. Subtracting both equalities, we get 
	\[
	(d_2 + c_{n-1})s_n = (1 + c_n) s_{n-1}.
	\]
Repeating the above process, we also have
	\[
	(d_3+c_{n-2})s_{n-1}=(d_2+c_{n-1})s_{n-2}
	\]
and in general for some $k$, we have
\begin{equation}
\label{eq:dcCondition}
(d_{k+1}+c_{n-k})s_{n-k+1}=(d_k+c_{n-k+1})s_{n-k}.
\end{equation}
If we know that $i=n-k$, then $\gcd(s_{n-k},s_{n-k+1})=1$ and we can deduce that $s_{n-k+1} | (d_k+c_{n-k+1})$ which will be necessary at a later stage of the proof..

By Lemma \ref{GorLemma}, a cone of the form $A \lambda \geq 0$ is Gorenstein if and only if there is a point $\tilde \c \in \cone(\Pbn) \cap \Z^{n+1}$ such that $\alpha^{i}(\tilde\c) = q_i$ for all $i$, where $\alpha^i$ is the $i^{\text{th}}$ row of $A$ and $q_i$ is defined as in Lemma \ref{GorLemma}. Lemma \ref{lem:H-DescriptionConeOverPolytope} explicitly describes the rows, implying
\[
q_1 = \frac{1}{s_1},  q_2 = \frac{1}{\operatorname{lcm}(s_1,s_2)}, \dots, q_n = \frac{1}{{\operatorname{lcm}(s_{n-1},s_{n-2})}}, q_{n+1} = \frac{1}{s_n}.
\]
We will show that $\tilde \c = (\c,h)$, where $\c$ is defined as above and where $h \in \Z_{\geq 1}$, satisfies \rem{ In particular, $\tilde c_i = c_i$ for $1 \leq i \leq n$ and $\tilde c_{n+1}=h$. By Lemma~\ref{GorLemma}, we need to verify that the point $\tilde \c\in \Z^{n+1}$ satisfies} $\alpha^{i}(\tilde\c) = q_i$ for all $i =1, \dots, n+1$. The conditions $\alpha^{i} (\tilde \c) = q_i$ for $i =1, \dots, n$ are equivalent to saying $c_1 = 1$ and that
\[
c_j s_{j-1} = c_{j-1}s_j + \gcd(s_{j-1},s_j)
\]
for $2\leq j \leq n$. These conditions are all satisfied by assumption. However, we also need to satisfy the condition 
\[
\frac{-c_n}{s_n} +\tilde c_{n+1} = \frac{1}{s_n}, 
\]
or equivalently
\[
 hs_n = 1 + c_n.
\]
Now, we note that from Equation (\ref{eq:dcCondition}) it follows that
	\[
	s_{n}=\frac{(1+c_n)}{(d_2+c_{n-1})}s_{n-1},
	\]
so we can rewrite 
	\[
	hs_{n-1} = d_2+c_{n-1}.
	\]	
We can iterate these substitutions repeatedly to arrive at the equality 
	\[
	hs_{n-k+1}=d_k+c_{n-k+1}.
	\]
However, since $s_{n-k+1} | (d_k+c_{n-k+1})$, $h$ is an integer. Here we are implicitly using that $\c, \d \in \Z_{\geq 1}^n$, which follows from the recursive definition. So we are done. %This also shows that $s_{n-j+1} | (d_j + c_{n-j+1})$ for all $j$.		
\end{proof}
\rem{
\flocomment{Remove the following remark}
\begin{remark}
Let $\s\in \Z_{\geq 1}^n$ be as above. Due to the recursive nature of $\c$ ($\d$, respectively), one can successively solve for $c_j$ ($d_{j}$, respectively) to obtain the following characterization: The polytope $\Pbn$ is Gorenstein if and only if for all $j \geq 2$
\begin{equation*}
c_j = \frac{\gcd(s_{j-1},s_j)}{s_{j-1}} + \sum_{k=1}^{j-1} \frac{\gcd(s_{k-1},s_k)}{s_{k-1}s_k} \quad \text{and } d_j=\quad \frac{\gcd(s_{n-j+2},s_j)}{s_{n-j+1}} + \sum_{k=1}^{j-1} \frac{\gcd(s_{n-k+2},s_{n-k+1})}{s_{n-k+2}s_{n-k+1}}
\end{equation*}
are integers where $s_0 = s_{n+1}=1$.
\end{remark}}
\rem{
\begin{remark}
\label{rem:VertexConeGorenstein}
The first direction of the proof holds more generally. It shows that if $P$ is Gorenstein, then $T_{\v}(P)$ is Gorenstein for any vertex. In particular, the forward direction of Theorem~\ref{GorensteinChar} holds independently of any assumptions on $\s$. Theorem~\ref{GorensteinChar} says that if the two vertex cones at $\v_1 =\mathbf 0$ and at $\v_2 = \s$ are Gorenstein, then the converse holds, i.e., $\Pbn$ is already Gorenstein. In this case $\c$ ($\d$ respectively) is the Gorenstein point of $T_{\mathbf 0}$ ($T_{\s}$ respectively).
\end{remark}}

\begin{remark}
We mentioned before that Theorem~\ref{GorensteinChar} applies to a large subfamily of $\s$-lecture hall polytopes. This remark will make this statement more precise. Given two positive integers $a$ and $b$, the probability that $\gcd(a,b)=1$ converges to $\frac{1}{\zeta(2)} = \frac{6}{\pi^2}$, where $\zeta(s) = \sum_{n=1}^{\infty} \frac{1}{n^s}$ is the \emph{Riemann $\zeta$-function}, see \cite[Theorem 332]{HardyWright}. Heuristically, assuming that these events are independent (which they are not), we then get that roughly $\left(1-\frac{6}{\pi^2}\right)^{n-1}$-percent of sequences fall within the range of our theorem. Computer simulations running $10,000,000$ repetitions  per dimension $n$ with parameters  $1\leq a,b \leq 10,000,000$ and $15 \leq n \leq 50$ suggest that this estimate is fairly precise.
\end{remark}

\begin{remark}
Let $\s\in \Z^n$ be such that $T_{\mathbf 0}(\Pbn)$ is Gorenstein with Gorenstein point $\c$. In \cite[Corollary 2.7]{BeckEtAl-GorensteinLHC}, the authors remark that the truncated sequence $(s_1,s_2,\dots,s_i)$ gives rise to a Gorenstein cone \begin{tiny}$T_{\mathbf 0}(\mathbf{P}^{((s_1,s_2,\dots,s_i))}_i)$\end{tiny} with Gorenstein point \begin{scriptsize}$(c_1,c_2,\dots,c_i)$\end{scriptsize}. However, the direct analogue of this statement is not true in our case. The sequence \begin{tiny}$(8,6,10,10,5,2,4)$\end{tiny} gives rise to a Gorenstein $\s$-lecture hall polytope, whereas \begin{tiny}$(8,6,10,10,5)$\end{tiny} does \emph{not} give rise to a Gorenstein $\s$-lecture hall polytope, since it has 39 interior lattice points.  
\end{remark}

Theorem~\ref{GorensteinChar} along with Theorem \ref{conesGorPairwise} implies the following more specialized characterization.

\begin{corollary}
Let $\s = (s_1,s_2,\dots,s_n)\in \Z_{\geq 1}^n$ such that $\gcd(s_i,s_{i+1})=1$ for all $1\leq i<n$. Then $\Pbn$ is Gorenstein if and only if $\s$ and $\overleftarrow{\s}$ are $\u$-generated sequences.
\end{corollary}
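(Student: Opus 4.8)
The plan is to read this off directly from Theorem~\ref{GorensteinChar} together with Theorem~\ref{conesGorPairwise}. First I would check that the standing hypothesis $\gcd(s_i,s_{i+1})=1$ for all $1\leq i<n$ in particular guarantees the (weaker) hypothesis of Theorem~\ref{GorensteinChar}, namely the existence of \emph{some} index $1<i\leq n$ with $\gcd(s_{i-1},s_i)=1$. Hence Theorem~\ref{GorensteinChar} applies and tells us that $\Pbn$ is Gorenstein if and only if there exist $\c,\d\in\Z^n$ with $c_1=d_1=1$ satisfying
\[
c_js_{j-1}=c_{j-1}s_j+\gcd(s_{j-1},s_j)
\qquad\text{and}\qquad
d_j\overleftarrow{s_{j-1}}=d_{j-1}\overleftarrow{s_j}+\gcd(\overleftarrow{s_{j-1}},\overleftarrow{s_j})
\]
for all $j>1$.

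The key observation is that these two systems of equations are exactly the criteria of Theorem~\ref{conesGor} for the cones $\Cc_n^{(\s)}$ and $\Cc_n^{(\overleftarrow{\s})}$, respectively, to be Gorenstein. So the existence of such $\c$ is equivalent to $\Cc_n^{(\s)}$ being Gorenstein, and the existence of such $\d$ is equivalent to $\Cc_n^{(\overleftarrow{\s})}$ being Gorenstein. Next I would verify that the pairwise-coprimality hypothesis passes to the reversed sequence: for $1\leq i< n$ one has $(\overleftarrow{s_i},\overleftarrow{s_{i+1}})=(s_{n-i+1},s_{n-i})$, and $\gcd(s_{n-i},s_{n-i+1})=1$ by hypothesis, so $\overleftarrow{\s}$ also satisfies $\gcd(\overleftarrow{s_i},\overleftarrow{s_{i+1}})=1$ for all $1\leq i<n$. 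Therefore Theorem~\ref{conesGorPairwise} applies to both $\s$ and $\overleftarrow{\s}$: $\Cc_n^{(\s)}$ is Gorenstein if and only if $\s$ is $\u$-generated by some sequence of positive integers, and likewise $\Cc_n^{(\overleftarrow{\s})}$ is Gorenstein if and only if $\overleftarrow{\s}$ is $\u'$-generated by some (possibly different) sequence of positive integers.

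Chaining these equivalences gives exactly the statement: $\Pbn$ is Gorenstein $\iff$ (existence of $\c$) and (existence of $\d$) $\iff$ $\Cc_n^{(\s)}$ and $\Cc_n^{(\overleftarrow{\s})}$ are both Gorenstein $\iff$ $\s$ and $\overleftarrow{\s}$ are both $\u$-generated. There is essentially no substantive obstacle here; the only things requiring care are the two routine bookkeeping points flagged above — confirming that the general hypothesis of Theorem~\ref{GorensteinChar} is implied by full pairwise coprimality, and confirming that reversal preserves consecutive coprimality so that Theorem~\ref{conesGorPairwise} may legitimately be invoked on $\overleftarrow{\s}$ — together with the (harmless) observation that the generating sequences $\u$ for $\s$ and for $\overleftarrow{\s}$ need not coincide, which is why the statement only asserts that each is $\u$-generated rather than by a common $\u$.
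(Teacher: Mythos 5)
Your argument is correct and is exactly the route the paper takes: the corollary is stated there as an immediate consequence of Theorem~\ref{GorensteinChar} combined with Theorem~\ref{conesGorPairwise}, with the recurrences for $\c$ and $\d$ recognized as the Gorenstein criteria for the cones $\Cc_n^{(\s)}$ and $\Cc_n^{(\overleftarrow{\s})}$. Your two bookkeeping checks (that full pairwise coprimality implies the hypothesis of Theorem~\ref{GorensteinChar}, and that reversal preserves consecutive coprimality) are the right points to verify and are handled correctly.
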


We have the following corollary on the level of $\s$-Eulerian polynomials

\begin{corollary}
Let $\s = (s_1,s_2,\dots,s_n)\in \Z_{\geq 1}^n$ such that $\gcd(s_i,s_{i+1})=1$ for some $1\leq  i< n$. The $\s$-Eulerian polynomial is palindromic if and only if there exist $\c,\d\in\Z^n$ satisfying
	\[
	c_js_{j-1}=c_{j-1}s_j+\gcd(s_{j-1},s_{j})
	\]
and 
	\[
	d_j\overleftarrow{s_{j-1}}=d_{j-1}\overleftarrow{s_j}+\gcd(\overleftarrow{s_{j-1}},\overleftarrow{s_{j}})
	\]	
for $j>1$ with $c_1=d_1=1$.	
\end{corollary}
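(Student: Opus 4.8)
The plan is to recognize this corollary as a direct consequence of three results already assembled in the excerpt, chained together. First I would invoke the theorem of Savage and Schuster (\cite[Theorem 8]{SavageSchuster}), which identifies the $\s$-Eulerian polynomial $\sum_{\e\in\Ibn} z^{\asc(\e)}$ with the Ehrhart $h^\ast$-polynomial $h^\ast(\Pbn,z)$. Since these are literally the same polynomial, the $\s$-Eulerian polynomial is palindromic if and only if $h^\ast(\Pbn,z)$ is palindromic.

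Next I would apply Stanley's criterion, quoted in Section~\ref{sec:Backgroup:Gorenstein} (from \cite[Thm. 4.4]{Stanley-HilbertFunctions}): a lattice polytope $P$ is Gorenstein precisely when $h^\ast(P,z)$ is palindromic. Applying this to $P = \Pbn$ converts the palindromicity statement into the statement that $\Pbn$ is Gorenstein. Finally, since the hypothesis of the corollary — that $\gcd(s_{i-1},s_i)=1$ for some $1 < i \leq n$ — is exactly the hypothesis of Theorem~\ref{GorensteinChar}, that theorem gives the equivalence between $\Pbn$ being Gorenstein and the existence of $\c,\d\in\Z^n$ satisfying the two displayed recurrences with $c_1 = d_1 = 1$. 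Concatenating the three equivalences yields the claim.

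I do not anticipate any genuine obstacle here; the content is entirely in Theorems~\ref{conesGor}, \ref{GorensteinChar} and the cited classical facts, and the corollary is a translation into the language of $\s$-Eulerian polynomials. The only point that warrants a sentence of care is bookkeeping about the notion of ``palindromic'': one should note that $\dim\Pbn = n$ and that the relevant degree of $h^\ast(\Pbn,z)$ is $r = \max\{\asc(\e)\colon \e\in\Ibn\}$, so that palindromicity of the $\s$-Eulerian polynomial in the combinatorial sense coincides with palindromicity of $h^\ast(\Pbn,z)$ used in Stanley's criterion — but since the two polynomials are equal this is immediate. Thus the proof is a short three-line argument citing \cite[Theorem 8]{SavageSchuster}, \cite[Thm. 4.4]{Stanley-HilbertFunctions}, and Theorem~\ref{GorensteinChar} in turn.
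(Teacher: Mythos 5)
Your proposal is correct and coincides with the paper's intended derivation: the paper states this as an immediate corollary of Theorem~\ref{GorensteinChar}, with the translation to palindromicity supplied exactly by the Savage--Schuster identification of the $\s$-Eulerian polynomial with $h^\ast(\Pbn,z)$ and Stanley's criterion that a lattice polytope is Gorenstein precisely when its $h^\ast$-polynomial is palindromic. No further comment is needed.
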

Table \ref{table:s-Eulerian} contains some examples of palindromic $\s$-Eulerian polynomials.
We list them with the corresponding $\c$ and $\d$ sequences to more immediately see why $\s$ satisfies the integrality condition specified by Theorem~\ref{GorensteinChar}.

\begin{table}[h!]
\centering
\footnotesize
	\begin{tabular}{|c|c|c|c|c|}
	\hline
	 & sequence $\s$ & corresponding $\c$ & corresponding $\d$ & $\s$-Eulerian polynomial\\
	 \hline\hline
	  & & & &  \\
	 (i) & $(2,1,3,2,1)$ & $(1,1,4,3,2)$ & $(1,3,5,2,5)$ & $1+5z+5z^2+z^3$\\
	 & & &  & \\
	 \hline
	  & & & & \\
	 (ii) & $(3,2,3,1,2)$ & $(1,1,2,1,3)$ & $(1,1,4,3,5)$ & $1+9z+16z^2+9z^3+z^4$\\ 
	 & & & & \\
	 \hline
	 & & & & \\
	 (iii) & $(1,4,3,2,3)$ & $(1,5,4,3,5)$ & $(1,1,2,3,1)$ & $1+ 16z+38z^2+16z^3+z^4$\\
	 & & & & \\
	 \hline
	 & & & & \\
	 (iv) & $(3,5,2,3,1)$ & $(1,2,1,2,1)$ & $(1,4,3,8,5)$ & $1+20z+48z^2+20z^3+z^4$\\
	 & & & & \\
	 \hline
	 & & & & \\
	 (v) & $(1,2,3,4,5)$ & $(1,3,5,7,9)$ & $(1,1,1,1,1)$ & $1+26z+66z^2+26z^3+z^4$\\
	 & & & &\\
	 \hline
	 & & & & \\
	 (vi) & $(1,2,5,8,3)$ & $(1,3,8,13,5)$ & $(1,3,2,1,1)$ & $1+50z+138z^2+50z^3+z^4$\\
	 & & & &  \\
	 \hline
	 & & & & \\
	 (vii) & $(4,3,2,5,3)$ & $(1,1,1,3,2)$ & $(1,2,1,2,3)$ &  $1+30z+149z^2+149z^3+30z^4+z^5$\\
	 & & & &\\
	 \hline
	 & & & & \\
	 (viii) & $(4,7,3,2,3)$ & $(1,2,1,1,2)$ & $(1,1,2,5,3)$ &   \begin{scriptsize}$1+43z+208z^2+208z^3+43z^4+z^5$\end{scriptsize}\\
	 & & & & \\
	 \hline
	 & & & &  \\ 
	 (ix) & $(5,9,4,3,2)$ & $(1,2,1,1,1)$ & $(1,2,3,7,6)$ &  \begin{scriptsize}$1+82z+457z^2+457z^3+82z^4+z^5$\end{scriptsize}\\
	 & & & & \\
	 \hline
	 & & & & \\
	 (x) & $(3,5,12,7,2)$ & $(1,2,5,3,1)$ & $(1,4,7,3,2)$ & \begin{scriptsize} $1+175z+1084z^2+1084z^3+175z^4+z^5$\end{scriptsize}\\
	 & & & & \\ 
	 \hline
	 & & & & \\
	 (xi) & $(3,11,8,5,2)$ & $(1,4,3,2,1)$ & $(1,3,5,7,2)$ &  \begin{scriptsize}$1+180z+1139z^2+1139z^3+180z^4+z^5$\end{scriptsize}\\
	 & & & & \\
	 \hline 
	 & & & &\\
	 (xii) & $(2,7,5,10,4)$ & $(1,4,3,7,3)$ & $(1,3,2,3,1)$ &  \begin{scriptsize}$1+181z+1218z^2+1218z^3+181z^4+z^5$ \end{scriptsize}\\
	 & & & &\\
	 \hline
	 & & & & \\
	 (xiii) & $(3,8,13,5,2)$ & $(1,3,5,2,1)$ & $(1,3,8,5,2)$ & \begin{scriptsize}$1+213z+1346z^2+1346z^3+213z^4+z^5$\end{scriptsize}\\
	 & & & & \\
	 \hline
	\end{tabular}
	\medskip
\normalsize
\caption{Palindromic $\s$-Eulerian Polynomials.}
\label{table:s-Eulerian}
\end{table}

%%%%
%%%% Commented out in the old table, if we want to switch back
%%%%
\commentout{
\begin{table}[h!]
\centering
\footnotesize
	\begin{tabular}{|c|c|c|}
	\hline
	 & sequence $\s$ & $\s$-Eulerian polynomial\\
	 \hline\hline
	  & &  \\
	 (i) & $(2,1,3,2,1)$ & $1+5z+5z^2+z^3$\\
	 & & \\
	 \hline
	  & & \\
	 (ii) & $(3,2,3,1,2)$ & $1+9z+16z^2+9z^3+z^4$\\ 
	 & & \\
	 \hline
	 & & \\
	 (iii) & $(1,4,3,2,3)$ & $1+ 16z+38z^2+16z^3+z^4$\\
	 & & \\
	 \hline
	 & & \\
	 (iv) & $(3,5,2,3,1)$ & $1+20z+48z^2+20z^3+z^4$\\
	 & & \\
	 \hline
	 & & \\
	 (v) & $(1,2,3,4,5)$ & $1+26z+66z^2+26z^3+z^4$\\
	 & &\\
	 \hline
	 & & \\
	 (vi) & $(1,2,5,8,3)$ & $1+50z+138z^2+50z^3+z^4$\\
	 & & \\
	 \hline
	 & & \\
	 (vii) & $(4,3,2,5,3)$ & $1+30z+149z^2+149z^3+30z^4+z^5$\\
	 & &\\
	 \hline
	 & & \\
	 (viii) & $(4,7,3,2,3)$ & $1+43z+208z^2+208z^3+43z^4+z^5$\\
	 & & \\
	 \hline
	 & & \\ 
	 (ix) & $(5,9,4,3,2)$ & $1+82z+457z^2+457z^3+82z^4+z^5$\\
	 & & \\
	 \hline
	 & & \\
	 (x) & $(3,5,12,7,2)$ & $1+175z+1084z^2+1084z^3+175z^4+z^5$\\
	 & & \\ 
	 \hline
	 & & \\
	 (xi) & $(3,11,8,5,2)$ & $1+180z+1139z^2+1139z^3+180z^4+z^5$\\
	 & & \\
	 \hline 
	 & & \\
	 (xii) & $(3,8,13,5,2)$ & $1+213z+1346z^2+1346z^3+213z^4+z^5$\\
	 & & \\
	 \hline
	\end{tabular}
	\medskip
\normalsize
\caption{Palindromic $\s$-Eulerian Polynomials.}
\label{table:s-Eulerian}
\end{table}
}

\section{Characterization of level $\s$-lecture hall polytopes}\label{sec:Level}
We now give a characterization of $\s$-sequences that admit level $\Pbn$, which is given in terms of the structure of $\s$-inversion sequences.
We recall two definitions from Section~\ref{sec:Intro}. Let $\Ib_{n,k}^{(\s)}\coloneqq\{\e\in\Ibn \, : \asc(\e)=k\}$ be the set of inversion sequences with exactly $k$ ascents. Furthermore, for inversion sequences $\e=(e_1,e_2,\ldots, e_n), \e'=(e_1',e_2',\ldots, e_n')\in\Ibn$, we define $\e+\e'=(e_1+e_1' \bmod s_1,e_2+e_2'\bmod s_2,\ldots, e_n+e_n'\bmod s_n)$.

\commentout{
}

\subsection{Proof of Theorem \ref{THM:LEVEL}}\label{sec:Level:Proof}
Our proof relies on understanding the link between the arithmetic structure of inversion sequences and the semigroup structure of lattice points in $\Pi_{\Pbn}$.
To fully understand and exploit this connection, we will need several lemmata. 
For notation, let $V(\Pbn)=\{\v_0,\ldots,\v_{n}\}$ denote the set of vertices of $\Pbn$ and let $\mathscr{P}_n^{(\s)}\coloneqq(\Pbn\cap \Z^n)-V(\Pbn)$.

\begin{lemma}\label{height1}
 There is an explicit bijection
	\[
	\phi:\mathscr{P}_n^{(\s)} \longrightarrow \Ib_{n,1}^{(\s)}
	\]
where  $\phi(\lambda_1,\ldots,\lambda_n)= (e_1,\ldots, e_n)$ given by $e_i=s_i-\lambda_i (\bmod s_i)$. 	
\end{lemma}

\begin{proof}
Let $\boldsymbol\lambda=(\lambda_1,\lambda_2,\ldots,\lambda_n)\in \mathscr{P}_n^{(\s)}$. We have that
	\[
	0\leq\frac{\lambda_1}{s_1}\leq\frac{\lambda_2}{s_2}\leq\ldots\leq \frac{\lambda_n}{s_n}\leq 1.
	\]
Note that this means that $\lambda_i\leq s_i$ for all $i$ and if $\lambda_i=s_i$ then $\lambda_j=s_j$ for all $i\leq j\leq n$.
Additionally, note that the vertices of $\Pbn$ are precisely the lattice points of the form
	\[
	(0,\ldots,0,s_i, s_{i+1},\cdots, s_n).
	\]	
So, then $\boldsymbol\lambda$ can be expressed as:
	\[
	\boldsymbol\lambda=(0,\ldots,0, a_i, a_{i+1},\ldots, a_j,s_{j+1},\ldots,s_n),
	\]	
where each $0<a_k<s_k$.

If we apply our map $\lambda_i\mapsto s_i-\lambda_i   \,\,(\operatorname{mod} s_i)$, we get the inversion sequence
	\[
	\e=(0,0,\ldots,0, s_i-a_i,s_{i+1}-a_{i+1},\ldots, s_j-a_j,0, \ldots, 0).
	\]	
It is left to verify that $\e\in\Ib_{n,1}^{(\s)}$. Since we know that 
	\[
	0<\frac{a_i}{s_i}\leq\frac{a_{i+1}}{s_{i+1}}\leq \ldots\leq \frac{a_j}{s_j}<1
	\]	
which holds if and only if 
	\[
	1>\frac{s_i-a_i}{s_i}\geq \frac{s_{i+1}-a_{i+1}}{s_{i+1}}\geq \ldots \geq \frac{s_j-a_j}{s_j}>0,
	\]	
we know that $\e$ contains exactly one ascent at position $i-1$.

This process is certainly reversible, so we have a bijection.	
\end{proof}

Note that $\mathscr{P}_n^{(\s)}$ contains precisely the elements at height 1 in $\Pi_{\Pbn}$. Next, we will extend $\varphi^{-1}$ to establish a bijection, $\psi$, between $\Pi_{\Pbn}$ and $ \Ib_{n}^{(\s)}$. 
%\flocomment{Entirely rewritten and inserted a new definition to make things cleaner}

\begin{definition}
\label{def:Bijection}
Let $\psi \colon \Ib_{n}^{(\s)} \to \Pi_{\Pbn} $ and let $\e=(e_1,e_2,\cdots,e_n)\in\Ib_{n,k}^{(\s)}$, where the $k$ ascents are at positions $i_1,i_2,\cdots,i_k$. Moreover, we set $i_{k+1} \coloneqq n$. Then, for $\ell \in [k]$ and $i_{\ell}<j \leq i_{\ell+1}$,  we define
	\[
	\psi(\e)_j= \ell\cdot s_j-e_j.
	\]
 %for $i_\ell<j\leq i_{\ell+1}$, $\psi(\e)_j=0$ if $1\leq j\leq i_1$, and $\psi(\e)_{n+1}=k$.
\end{definition}

\begin{lemma}\label{correspondence}
The map $\psi \colon \Ib_{n}^{(\s)} \to \Pi_{\Pbn} \cap \Z^{n+1}$ is a bijection and $\psi ( \Ib_{n,k}^{(\s)}) = \{\x \in \Pi_{\Pbn} \cap \Z^{n+1} \colon \x_{n+1}=k \}$.
Moreover, if $\f \in \Ib_{n,k-1}^{(\s)}$ and $\g \in \Ib_{n,1}^{(\s)}$, then $\f+\g  \in \Ib_{n,k}^{(\s)}$ if and only if $\psi(\f)+\psi(\g)\in \Pi_{\Pbn}$.
 \commentout{vector-addition of the elements in $\Pi_{\Pbn}\cap \Z^{n+1}$ corresponds to entry-wise addition of the inversion sequences modulo $s_i$ in the $i$th position. That is, any decomposition of $\lambda$ as a sum of elements of height one in $\Pi_{\Pbn}$ is consistent with the sum of inversion sequences.}
\end{lemma}
\begin{remark}
The map $\psi$ is related to the bijective map $\overline{\operatorname{REM}}$ in \cite{LiuStanley}. To be precise, for $1 \leq j \leq n$, we have $\psi(\e)_j = \overline{\operatorname{REM}}^{-1}(\e)_j$  . There are two novel parts of Lemma~\ref{correspondence}. First, it establishes that elements with $k$ \emph{ascents} get mapped to height $k$, which generalizes \cite[Corollary 6.2]{LiuStanley}. It is curious to compare this to \cite[Corollary 3.8]{LiuStanley}, where Liu--Stanley define a map $\operatorname{REM}$ that bijectively maps integer points of  $\Pi_{\Pbn} $ of height $k$ to elements of $\Ib_{n}^{(\s)} $ with $k$ \emph{descents}. The second novel result is that  $\f+\g  \in \Ib_{n,k}^{(\s)}$ if and only if $\psi(\f)+\psi(\g)\in \Pi_{\Pbn}$. This second part will be crucial in the proof of Theorem~\ref{THM:LEVEL}.
\end{remark}
\commentout{
\begin{remark}
By entry-wise addition of the inversion sequences modulo $s_i$ in the $i$th position, we mean that we pick the unique representative of this equivalence class in $\{0,1,\dots,s_i -1 \}$.
\end{remark}}

\begin{proof}[Proof of Lemma \ref{correspondence}]
It is clear that map from Definition~\ref{def:Bijection} is injective. We must verify the following:
	\begin{enumerate}
	\item[(A)] The image of $\boldsymbol\lambda$ under the map from Definition~\ref{def:Bijection} is an element of $\Pi_{\Pbn}$.
	\item[(B)] Entry-wise addition of inversion sequences is consistent with addition in the semigroup $\cone(\Pbn) \cap \Z^{n+1}$.
	\end{enumerate}

To show (A), note that it is clear that $\boldsymbol\lambda$ is at height $k$ in $\R^{n+1}$. Moreover, it is even clear that $(\lambda_1,\cdots,\lambda_n)\in k\cdot \Pbn\cap \Z^n$, as $i_\ell<t<i_{\ell+1}$ and $\displaystyle \frac{e_t}{s_t}\geq \frac{e_{t+1}}{s_{t+1}}$ imply that  
	\[
	\frac{\ell\cdot s_t-e_t}{k\cdot s_t}\leq \frac{\ell\cdot s_{t+1}-e_{t+1}}{k\cdot s_{t+1}},
	\]	
and if $t=i_{\ell+1}$, 
	\[
	\frac{\ell\cdot s_t-e_t}{k\cdot s_t}\leq \frac{(\ell+1)\cdot s_{t+1}-e_{t+1}}{k\cdot s_{t+1}}
	\]	
is immediate from $e_{t+1}<s_{t+1}$.

To verify that $\boldsymbol\lambda$ is in fact in $\Pi_{\Pbn}$, we must show that neither of the following hold:
	\begin{itemize}
	\item[(i)] $(\lambda_1,\cdots,\lambda_n)\in (k-1)\cdot \Pbn\cap \Z^n$
	\item[(ii)] $\boldsymbol\lambda=\boldsymbol\lambda'+\v$ where $(\lambda_1',\cdots,\lambda_n')\in(k-1)\cdot \Pbn\cap \Z^n$ and $\v$ is a vertex of $\Pbn$.
	\end{itemize}

Note that (i) is impossible as we have $\lambda_n=k\cdot s_n-e_n>(k-1)s_n$ because $e_n<s_n$. For (ii), suppose that we write $\boldsymbol\lambda=\boldsymbol\lambda'+\v$, where $\v=(0,0,\cdots,0,s_{j+1},\cdots, s_n)$ with $0\leq j<n$. There are two possible cases: $j\in \Asc(\e)$ or $j\not\in\Asc(\e)$. If $j\in \operatorname{Asc}(\e)$, then $\displaystyle \frac{e_j}{s_j}<\frac{e_{j+1}}{s_{j+1}}$. Consider $\boldsymbol\lambda'$ and suppose that  
	\[
	(\lambda_1',\cdots,\lambda_n')=(\lambda_1,\cdots,\lambda_j,\lambda_{j+1}-s_{j+1},\cdots, \lambda_n-s_n)\in(k-1)\cdot\Pbn\cap\Z^n.
	\]	
Given that $\lambda_j=(p-1)\cdot s_j -e_j$ and $\lambda_{j+1}=p\cdot s_{j+1}-e_{j+1}$ where $j$ is the $p$th ascent $i_p$, 
	\[
	\frac{(p-1)\cdot s_j-e_j}{(k-1)s_j}\leq \frac{p\cdot s_{j+1}-e_{j+1}-s_{j+1}}{(k-1)s_{j+1}}=\frac{(p-1)\cdot s_{j+1}-e_{j+1}}{(k-1)s_{j+1}}.
	\]	
However, this is equivalent to $\displaystyle\frac{e_j}{s_j}\geq \frac{e_{j+1}}{s_{j+1}}$ so this cannot occur. 	If $j\not\in\Asc(\e)$, say that $j>i_p$, the location of the $p$th ascent $i_p$, so $\lambda_j=p\cdot s_j-e_j$ and $\lambda_{j+1}=p\cdot s_{j+1}-e_{j+1}$. For $(\lambda_1',\cdots,\lambda_n')\in(k-1)\cdot\Pbn\cap \Z^n$, 
	\[
	\frac{p\cdot s_j-e_j}{(k-1)s_j}\leq \frac{p\cdot s_{j+1}-e_{j+1}-s_{j+1}}{(k-1)s_{j+1}}=\frac{(p-1)\cdot s_{j+1}-e_{j+1}}{(k-1)s_{j+1}}.
	\]
This inequality is equivalent to $\displaystyle \frac{e_j}{s_j}\geq \frac{e_{j+1}}{s_{j+1}}+1	$ which is a contradiction to $e_j<s_j$.

Therefore, we have shown (A). Note that this is sufficient for showing the bijection, as the map is clearly injective and the sets are of the same cardinality by previous work of Savage and Schuster \cite{SavageSchuster}. That said, the bijection can also be realized through the fact that the map from Lemma~\ref{correspondence} can clearly be reversed. In particular, suppose that $\boldsymbol\lambda=(\lambda_1,\cdots,\lambda_n,k)\in \Pi_{\Pbn}$, we get our inversion sequence $\e$ by
	\[
	e_i=-\lambda_i \mod s_i.
	\]  
Note that this inversion sequence will have precisely $k$ ascents and moreover the $p$th ascent in the sequence will occur at $i$ precisely when $(p-1)\cdot s_i\leq \lambda_i <p\cdot s_i$ and $p\cdot s_{i+1}\leq \lambda_{i+1}< (p+1)\cdot s_{i+1}$ for some $1\leq p\leq k-1$. This is the exact reversal of the constructive map from inversion sequences to lattice points is $\Pi_{\Pbn}$. 	

%%%%%%%%%%%%%%%%%%%%%%%%%%%%%%%%%%%%%%%%%%%%%%%%%%%%%%%%%%%%%%%%%%%%%%%%%%%%%%%%%%%%%%%%%%%%%%%%%%%%%%%%%%%%%%%%%%%%%%%%%%%%%%%%%%%%%%%%%%%%%%%%%%%%%%%%%%%%%%%%%%%%%%%%%%%%%%%%%%%%%%%%%%%%%%%%%%%%%%%%%%%%%%%%%%%%%%%%%%%%%%%%%%%%%%%%%%%%%%%%%%%COMMENTED OUT VERSION BELOW%%%%%%%%%%%%%%%%%%%%%%%%%%%%%%%%%%%%%%%%%%%%%%%%%%%%%%%%%%%%%%%%%%%%%%%%%%%%%%%%%%%%%%%%%%%%%%%%%%%%%%%%%%%%%%%%%%%%%%%%%%%%%%%%%%%%%%%%%%%%%%%%%%%%%%%%%%%%%%%%%%%%%%%%%%%%%%%%%%%%%%%%%%%%%%%%%%%%%%%%%%%%%%%%%%%%%%%%%%%%%%%%%%%%%%%%%%%%%%%%%%%%%%%%%%%%%%%%%%%%%%%%%%%%%%%%%%%%%%%%%%%%%%%%%%%%%%%
\commentout{
To show (B), suppose that we have $\e\in\Ib_{n,k}^{(\s)}$ which can be decomposed as a sum $\e=\f+\g$ where $\f\in\Ib_{n,k-1}^{(\s)}$ and $\g\in\Ib_{n,1}^{(\s)}$. So then we have that $\g$ must be of the form
	\[
	\g=(0, \ldots, 0, g_j, g_{j+1},\ldots, g_h,0, \ldots,0)
	\]
where the single ascent occurs at position $j-1$. We have that 
	\[
	\f=(f_1,\ldots,f_{j-1},f_j,f_{j+1},\ldots, f_h,f_{h+1},\ldots,f_n)
	\]
and subsequently 
	\[
	\e=(f_1,\ldots,f_{j-1},f_j +g_j \bmod s_j,f_{j+1}+g_{j+1}\bmod s_{j+1} \ldots, f_h+g_h\bmod s_h,f_{h+1},\ldots, f_n).
	\]	
This addition by assumption creates a new ascent. Before some case analysis, we make a few observations. First note that for $j\leq i <h$ if $f_i+g_i<s_i$ and $f_{i+1}+g_{i+1}<s_{i+1}$, then we have the following implication
	
	\begin{equation}\label{implication1}
	\frac{e_i}{s_i}<\frac{e_{i+1}}{s_{i+1}} \ \ \Longrightarrow \ \ \frac{f_i}{s_i}<\frac{f_{i+1}}{s_{i+1}}.
	\end{equation} 
This follows from the assumption that $\displaystyle \frac{g_i}{s_i}\geq \frac{g_{i+1}}{s_{i+1}}$ and some manipulation. 
Therefore, we cannot gain an ascent in $\e$, which was not already an ascent in $\f$. We also note that for $j\leq i <h$ if $f_i+g_i\geq s_i$ and $f_{i+1}+g_{i+1}\geq s_{i+1}$ then 
	\begin{equation}\label{implication2}
	\frac{e_i}{s_i}<\frac{e_{i+1}}{s_{i+1}} \ \ \ \Longrightarrow \ \ \ \frac{f_i}{s_i}<\frac{f_{i+1}}{s_{i+1}}.
	\end{equation}
This follows because $e_i=f_i+g_i-s_i$, $e_{i+1}=f_{i+1}+g_{i+1}-s_{i+1}$ and the assumption that $\displaystyle \frac{g_i}{s_i}\geq \frac{g_{i+1}}{s_{i+1}}$ after some manipulation.
One should also note that if $f_j+g_j\geq s_j$, we must have that 
	\begin{equation}\label{implication3}
	\frac{e_{j-1}}{s_{j-1}}<\frac{e_j}{s_j} \ \ \ \Longrightarrow \ \ \ \frac{f_{j-1}}{s_{j-1}}<\frac{f_j}{s_j}
	\end{equation}
which will follow from noticing that $e_{j-1}=f_{j-1}$ and $e_{j}=f_j+g_j-s_j\leq f_j-1$.

Additionally, notice that if for some $j\leq i <h$ we have $f_i+g_i<s_i$ and $f_{i+1}+g_{i+1}\geq s_{i+1}$ we have that 
	\begin{equation}\label{implication4}
	\frac{f_i}{s_i}<\frac{f_{i+1}}{s_{i+1}} \ \ \ \Longrightarrow \ \ \ \frac{e_i}{s_i}\geq \frac{e_{i+1}}{s_{i+1}}.
	\end{equation}
To see this implication, first observe that $e_i=f_i+g_i$ and $e_{i+1}=f_{i+1}+g_{i+1}-s_{i+1}$. If the inequality in $\f$ was preserved, this would imply that 
	\[
	f_is_{i+1}+g_i s_{i+1}<f_{i+1}s_i+g_{i+1}s_i-s_{i+1}s_i
	\]
which can only occur if $g_is_{i+1}<g_{i+1}s_i$ which contradicts the assumption that $i\not\in\Asc(\g)$.

We also have that if for some $j\leq i<h$ with $f_i+g_i\geq s_i$ and $f_{i+1}+g_{i+1}<s_{i+1}$, then 
	\begin{equation}\label{implication5}
	\frac{f_i}{s_i}\geq \frac{f_{i+1}}{s_{i+1}} \ \ \ \Longrightarrow \ \ \ \frac{e_i}{s_i}<\frac{e_{i+1}}{s_{i+1}}.
	\end{equation}
To see this implication, we observe that $e_i=f_i+g_i-s_i$ and $e_{i+1}=f_{i+1}+g_{i+1}$. Moreover, if the inequality of $\f$ were preserved, this would imply that 
	\[
	f_is_{i+1}+g_is_{i+1}-s_is_{i+1}\geq f_{i+1}s_i+g_{i+1}s_i
	\]
which must imply that either $f_is_{i+1}<f_{i+1}s_i$ or $g_is_{i+1}<g_{i+1}s_i$ and both of these are contradictions.

Notice that Equations (\ref{implication1})--(\ref{implication5}) determine  precise locations where the addition either forces the existence of new ascents (\ref{implication5}), cannot create new ascents (\ref{implication1})--(\ref{implication3}), or necessarily destroys an existing descent (\ref{implication4}). 
Subsequently, in order for $\asc(\e)=\asc(\f)+1$, one of the following scenarios must occur: 			
	
	\begin{enumerate}[{\bf I.}]
	\item Each $f_i+g_i<s_i$ for all $j\leq i\leq h$.
	\item Each $f_i+g_i\geq s_i$ for all $j\leq i\leq h$.
	
	\item  There exist indices $j=a_0<a_1<a_2<\cdots<a_{2r+1}\leq h$ such that $f_i+g_i\geq s_i$ for all $a_{2m}<i\leq a_{2m+1}$ and $f_i+g_i< s_i$ for all $a_{2m-1}<i\leq a_{2m}$. \flocomment{shouldn't $a_{2m+1} = h$?}
	
	\item There exist indices $j=a_0<a_1<a_2<\cdots<a_{2r}\leq h$ such that $f_i+g_i<s_i$ for all $a_{2m}<i\leq a_{2m+1}$ and $f_i+g_i\geq s_i$ for all $a_{2m-1}<i\leq a_{2m}$.  \flocomment{shouldn't $a_{2m} = h$}
	\end{enumerate}

For case {\bf I}, we notice that Equation (\ref{implication1}) implies that $j-1\in\Asc(\e)$, $j-1\not\in\Asc(\f)$, and $\Asc(\e)=\Asc(\f)\cup \{j-1\}$.
Now, suppose that $i_\ell <j\leq i_{\ell+1}$ where $i_\ell$ and $i_{\ell+1}$ denote the $\ell$th and $\ell+1$th ascents of $\f$  respectively. 
Our corresponding lattice points are
	\[
	\lambda_\f=(\ldots, \ell s_{j-1}-f_{j-1},\ell s_{j}-f_{j},\ldots, p s_{h}-f_h, \ldots, (k-1)s_n-f_n,k-1)
	\]
and 
	\[
	\lambda_\g=(0,\ldots,0,s_j-g_j,\ldots, s_h-g_h,s_{h+1},\ldots,s_n,1)
	\]
and 
	\[
	\lambda_\e=(\ldots, \ell s_{j-1}-f_j, (\ell+1)s_j-(f_j+g_j),\ldots, (p+1)s_h-(f_h+g_h),\ldots,ks_n-f_n,k)
	\]	
and it is clear that the addition is compatible.
For case {\bf II}, notice that this is only possible if $\Asc(\e)=\Asc(\f)\cup \{h\}$ as implications (\ref{implication2}) and  (\ref{implication3})  imply that we can create a new ascent in only this place.
Now suppose that $i_\ell<h+1\leq i_{\ell+1}$ where $i_\ell$ and $i_{\ell+1}$ denote the $\ell$th and $\ell+1$th ascent, respectively. Now we have the corresponding lattice points

\[
	\lambda_\f=(\ldots, p_{j} s_{j}-f_{j},\ldots, \ell s_{h}-f_h, \ell s_{h+1}-f_{h+1}, \ldots, (k-1)s_n-f_n,k-1)
	\]
and 
	\[
	\lambda_\g=(0,\ldots,0,s_j-g_j,\ldots, s_h-g_h,s_{h+1},\ldots,s_n,1)
	\] 
and 
	\[
	\lambda_\e=(\ldots, p_{j}s_j-(f_j+g_j-s_j),\ldots, \ell s_h-(f_h+g_h-s_h),(\ell+1)s_{h+1}-f_{h+1},\ldots, k s_n-f_n, k).
	\]
It is clear that the addition is compatible.

For Case {\bf III}, notice that by Equation (\ref{implication4}) and (\ref{implication5}) we must have that $a_{2m}\in \Asc(\f)$ with $a_{2m}\not\in \Asc(\e)$, as well as $a_{2m-1}\not\in \Asc(\f)$ and $a_{2m-1}\in \Asc(\e)$ for $m\geq 1$.
In the case of $a_0=j$, either $j\in\Asc(\e)$ and $j\in\Asc(\f)$ or $j\not\in\Asc(\e)$ and $j\not\in\Asc(\f)$.
Equation (\ref{implication3}) tells us that no new ascent can be created here and if $j\not\in\Asc(\e)$ and $j\in\Asc(\f)$, then we must have $\asc(\f)\geq \asc(\e)$ as it is not possible to have enough ascents.
Now to show that the addition is consistent we will observe that we need only consider the coordinates where either an ascent is added or removed. 
For all other positions, the addition has been shown in handling cases {\bf I} and {\bf II}.
Consider position $a_{2m-1}$ for some $m$.  Note that based on the pattern $\#(\Asc(\e)\cap [a_{2m-1}-1])=\#(\Asc(\f)\cap [a_{2m-1}-1])$. We have
	\[
	(\lambda_{\f_{a_{2m-1}}},\lambda_{\f_{a_{2m-1}+1}})= (\ell  s_{a_{2m-1}}-f_{a_{2m-1}},\ell s_{a_{2m-1}+1}-f_{a_{2m-1}+1})
	\]
and 
	\[
	(\lambda_{\g_{a_{2m+1}}},\lambda_{\g_{a_{2m+1}+1}}) =(s_{a_{2m+1}}-g_{a_{2m+1}}, s_{a_{2m+1}+1}-g_{a_{2m+1}+1})
	\]	
and 
	\[
	(\lambda_{\e_{a_{2m+1}}},\lambda_{\e_{a_{2m-1}+1}})=( \ell s_{a_{2m-1}}-(f_{a_{2m-1}}+g_{a_{2m-1}}-s_{a_{2m-1}}),(\ell+1)s_{a_{2m-1}+1}-(f_{a_{2m-1}+1}-g_{a_{2m-1}+1})).
	\]	
It is clear that addition is compatible in this case. Now consider position $a_{2m}$ for some $m$	. Note that based on the pattern $\#(\Asc(\e)\cap [a_{2m}-1])=\#(\Asc(\f)\cap [a_{2m}-1])+1$. Now we have
\[
	(\lambda_{\f_{a_{2m}}},\lambda_{\f_{a_{2m}+1}})= (\ell  s_{a_{2m}}-f_{a_{2m+1}},(\ell+1) s_{a_{2m}+1}-f_{a_{2m}+1})
	\]
and 
	\[
	(\lambda_{\g_{a_{2m}}},\lambda_{\g_{a_{2m}+1}}) =(s_{a_{2m}}-g_{a_{2m}}, s_{a_{2m}+1}-g_{a_{2m}+1})
	\]	
and 
	\[
	(\lambda_{\e_{a_{2m}}},\lambda_{\e_{a_{2m}+1}})=( (\ell+1) s_{a_{2m}}-(f_{a_{2m}}+g_{a_{2m}}),(\ell+1)s_{a_{2m}+1}-(f_{a_{2m}+1}-g_{a_{2m}+1}-s_{a_{2m}+1}).
	\]
Note that addition is compatible in this case.
Moreover, to show case {\bf IV}, we first note that we must have $j\not\in\Asc(\f)$ and $j\in\Asc(\e)$, else there will not be enough new ascents and we will have $\asc(\f)\geq \asc(\e)$. 
The remaining argument and computations are analogous to that of case {\bf III}.

Now, we must show the reverse direction that semigroup addition is compatible with addition of inversion sequences. Suppose that we have $\lambda_\f,\lambda_\g,\lambda_\e\in\Pi_{\Pbn}$ such that $\lambda_e=\lambda_\f+\lambda_\g$ and we know that $\e\in\Ib_{n,k}^{(\s)}$, $\f\in\Ib_{n,k-1}^{(\s)}$, and $\g\in \Ib_{n,1}^{(\s)}$.
We want to show that $\e=\f+\g$. Let
	\[
	\lambda_\e=(\lambda_{\e_1},\cdots,\lambda_{\e_j},\lambda_{\e_{j+1}},\cdots,\lambda_{\e_{n}},k)
	\]
and 
	\[
	\lambda_\g=(0,\cdots, 0,\lambda_{\g_j},\lambda_{\g_{j+1}},\cdots,\lambda_{\g_n},1),
	\]	
and 
	\[
	\lambda_\f=(\lambda_{\e_1},\cdots,\lambda_{\e_j}-\lambda_{\g_j},\lambda_{\e_{j+1}}-\lambda_{\g_{j+1}},\cdots,\lambda_{\e_{n}}-\lambda_{\g_n},k-1).
	\]	

However, this follows because the inverse map $\Pi_{\Pbn}\to \Ibn$ gives us the following inversion sequences:
	\[
	\e=(-\lambda_{\e_1}\mod s_1,\cdots, -\lambda_{\e_j} \mod s_j,\cdots, -\lambda_{\e_n} \mod s_n)
	\]
	and
	\[
	\g=(0,\cdots,0,-\lambda_{\g_j}\mod s_j,\cdots,-\lambda_{\g_n}\mod s_n) 
	\]
and  
	\[
	\f=(-\lambda_{\e_1} \mod s_1,\cdots,-(\lambda_{\e_j}-\lambda_{\g_j})\mod s_j,\,\cdots,-(\lambda_{\e_{n}}-\lambda_{\g_n}) \mod s_n).
	\]
Subsequently, it is clear that $\e=\f+\g$.
}

%%%%%%%%%%%%%%%%%%%%%%%%%%%%%%%%%%%%%%%%%%%%%%%%%%%%%%%%%%%%%%%%%%%%%%%%%%%%%%%%%%%%%%%%%%%%%%%%%%%%%%%%%%%%%%%%%%%%%%%%%%%%%%%%%%%%%%%%%%%%%%%%%%%%%%%%%%%%%%%%%%%%%%%%%%%%%%%%%%%%%%%%%%%%%%%%%%%%%%%%%%%%%%%%%%%%%%%%%%%%%%%%%%%%%%%%%%%%%%%%%%%%%%%%%%%%%%%%%%%%%%%%%%%%%%%%%%%%%%%%%%%%%%%%%%%%%%%%%%%%%%%%%%%%%%%%%%%%%%%%%%%%%%%%%%%%%%%%%%%

To show (B), suppose that $\f\in\Ib_{n,k-1}^{(\s)}$ and $\g\in \Ib_{n,1}^{(\s)}$ such that $\f+\g=\e\in\Ib_{n,k}^{(\s)}$. So
	\[
	\f=(f_1,\ldots,f_{j-1},f_j,\ldots, f_h, f_{h+1},\ldots, f_n)
	\]
	and
	\[
	\g=(0,\ldots,0,g_j,\ldots,g_h,0,\ldots,0)
	\]
	and 
	\[
	\e=(f_1,\ldots,f_{j-1},(f_j+g_j)\bmod s_j,\ldots,(f_h+g_h)\bmod s_h, f_{h+1},\ldots,f_n).
	\]

Consider the corresponding lattice points for $\f$ and $\g$ in $\Pi_{\Pbn}$:
	\[
	\boldsymbol\lambda_\f=(\lambda_{\f_1},\ldots, \lambda_{\f_{j-1}},\lambda_{\f_j},\ldots,\lambda_{\f_h},\lambda_{\f_{h+1}},\ldots,\lambda_{\f_n},k-1)
	\]
	and 
	\[
	\boldsymbol\lambda_\g=(0,\ldots,0,\lambda_{\g_j},\ldots,\lambda_{\g_h},s_{h+1},\ldots, s_n, 1).
	\]
Adding these lattice points in the semigroup yields
	\[
	\boldsymbol\lambda_\f+\boldsymbol\lambda_\g=(\lambda_{\f_1},\ldots, \lambda_{\f_{j-1}},\lambda_{\f_j}+\lambda_{\g_j},\ldots,\lambda_{\f_h}+\lambda_{\g_h},\lambda_{\f_{h+1}}+s_{h+1},\ldots,\lambda_{\f_n}+s_n,k).
	\]	
We have two possible cases: either  $\boldsymbol\lambda_\f+\boldsymbol\lambda_\g\in\Pi_{\Pbn}$ or  $\boldsymbol\lambda_\f+\boldsymbol\lambda_\g\not\in\Pi_{\Pbn}$.

If $\boldsymbol\lambda_\f+\boldsymbol\lambda_\g\in\Pi_{\Pbn}$, we consider the reverse map which will give the inversion sequence
\begin{footnotesize}	\[
	(\ldots, \lambda_{\f_{j-1}}\bmod s_{j-1},-(\lambda_{\f_j}+\lambda_{\g_j})\bmod s_j,\ldots,-(\lambda_{\f_h}+\lambda_{\g_h})\bmod s_h,-(\lambda_{\f_{h+1}}+s_{h+1})\bmod s_{h+1},\ldots)
	\]
\end{footnotesize}
and this inversion sequence is precisely $\e=\f+\g$, as desired.

Now suppose that $\boldsymbol\lambda_\f+\boldsymbol\lambda_\g\not\in\Pi_{\Pbn}$.
 Note that we can express $\boldsymbol\lambda_\f+\boldsymbol\lambda_\g=\lambda'+\sum_{i=1}^n \alpha_i v_i$ where $\boldsymbol\lambda'\in\Pi_{\Pbn}$, there is at least one  $\alpha_i \in \Z_{\geq 1}$, and  $\boldsymbol\lambda'$ is at height $r<k$. 
Additionally, given that $\v_i=(0,\ldots,0,s_i,s_{i+1},\ldots, s_n)$, it is clear that $\boldsymbol\lambda_\f+\boldsymbol\lambda_\g$ maps to the same inversion sequence as $\boldsymbol\lambda'$ by definition of the inverse map. 
This implies that $\e$ maps to $\lambda'$ and thus $\e\in\Ib_{n,r}^{(\s)}$ for $r<k$, which contradicts our initial assumption.
\end{proof}

\begin{remark}
We should note that in the proof about the compatibility of addition, we consider only inversion sequences $\f\in \Ib_{n,k-1}^{(\s)}$ and $\g\in \Ib_{n,1}^{(\s)}$ such that $\f+\g\in\Ib_{n,k}^{(\s)}$, as this is the requirement for staying inside the fundamental parallelepiped. However, this need not always be the case. If $\f+\g\in\Ib_{n,\ell}^{(\s)}$ for some $\ell\leq k-1$, the addition of the sequences is still consistent with addition in the semigroup, but this occurrence is precisely when $\boldsymbol\lambda_\f+\boldsymbol\lambda_\g\not\in\Pi_{\Pbn}$. In particular, $\boldsymbol\lambda_\f+\boldsymbol\lambda_\g=\boldsymbol\lambda_{\f+\g}+(0,\cdots,0,k-\ell)$, which lies in the equivalence class  $\boldsymbol\lambda_{\f+\g}$, but is not the representative in $\Pi_{\Pbn}$.
\end{remark}

\commentout{
\begin{remark} This may be implicit from the seminal $\s$-Eulerian polynomial work of Savage and Schuster \cite{SavageSchuster}. 
\end{remark}

\begin{remark}
One could rephrase the statement of the Lemma~\ref{correspondence} to say that addition of inversion sequences is compatible with addition of lattice points in the semigroup modulo the equivalence class given by the fundamental parallelepiped.
\end{remark}}

%%%%%%%%%%%%%%%%%%%%%%%%%%%%%%%%%%%%%%%%%%%%%%%%%%%%%%%%%%%%%%%%%%%%%%%%%%%%%%%%%%%%%%%%%%%%%%%%%%%%%%%%%%%%%%%%%%%%%%%%%%%%%%%%%%%%%%%%%%%%%%%%%%%%%%%%%%%%%%%%%%%%%%%%%%%%%%%%%%%%%%%%%%%%%%%%%%%%%%%%%%%%%%%%%%%%%%%%%%%%%%%%%%%%%%%%%%%%%%%%%%%%%%%%%% OLD PROOFS BELOW %%%%%%%%%%%%%%%%%%%%%%%%%%%%%%%%%%%%%%%%%%%%%%%%%%%%%%%%%%%%%%%%%%%%%%%%%%%%%%%%%%%%%%%%%%%%%%%%%%%%%%%%%%%%%%%%%%%%%%%%%%%%%%%%%%%%%%%%%%%%%%%%%%%%%%%%%%%%%%%%%%%%%%%%%%%%%%%%%%%%%%%%%%%%%%%%%%%%%%%%%%%%%%%%%%%%%%%%%%%%%%%%%%%%%%%%%%%%%%%%%%%%%%%%%%%%%%%%%%%%%%%%%%%%%%%%%%%%%%%%%%%%%%%%%%%%%%%%%

\commentout{
To show (B) let $\e=(e_1,e_2,\cdots, e_n)\in\Ib_{n,k}^{(\s)}$ and suppose that $\e= \f+\g$ entry-wise where $\f\in\Ib_{n,k-1}^{(\s)}$ and $\g\in\Ib_{n,1}^{(\s)}$.
Let $\lambda_\e$, $\lambda_\f$, and $\lambda_\g$ be the lattice points in $\Pi_{\Pbn}$ corresponding to each inversion sequence. We will show that $\lambda_\e=\lambda_\f+\lambda_\g$ in the semigroup. Note that $\g=(0,0,\cdots,0, g_j, g_{i+1},\cdots, g_h,0,\cdots,0)$ and thus
	\[
	\f=(e_1,\cdots,e_{j-1},e_j-g_j\bmod s_j,e_{j+1}-g_{j+1}\bmod s_{j+1},\cdots,e_h-g_h\bmod s_h,e_{h+1},\cdots,e_n)
	\]
We first note that $\Asc(\e)=\Asc(\f)\cup \{i\}$ where $j-1\leq i \leq k$, as we must create a new ascent.
We claim that there are two possible cases:
	\begin{enumerate}
	\item $j-1\in\Asc(\e)$, $j-1\not\in\Asc(\f)$, and $g_i\geq e_i$ for $j\leq i\leq h$
	\item There is some $i\in \Asc(\e)$ such that $i\not\in \Asc(\f)$ and $g_k>e_k$ for all $j\leq k\leq i$ and $g_k\leq e_k$ for all $i+1\leq k \leq h$.
	\end{enumerate}
Necessary observations:

	If we have both $g_a\leq e_a$ and $g_{a+1}\leq e_{a+1}$ or we have bother $g_a>e_a$ and $g_{a+1}> e_{a+1}$, then 
	\[
	\frac{e_a}{s_a}<\frac{e_{a+1}}{s_{a+1}} \ \ \Longleftrightarrow \frac{(e_a-g_a\bmod s_a)}{s_a}<\frac{(e_{a+1}-g_{a+1}\bmod s_{a+1})}{s_{a+1}}
	\] 
	and 
	\[
	\frac{e_a}{s_a}\geq\frac{e_{a+1}}{s_{a+1}} \ \ \Longleftrightarrow \frac{(e_a-g_a\bmod s_a)}{s_a}\geq\frac{(e_{a+1}-g_{a+1}\bmod s_{a+1})}{s_{a+1}}.
	\]
This ensures that we can only change the number of ascents at positions where we have $e_a>g_a$ and $e_{a+1}\leq g_{a+1}$ or we have $e_a\leq g_a$ and $e_{a+1}> g_{a+1}$.
We must consider both of these cases.
First, suppose $e_a>g_a$ and $e_{a+1}\leq g_{a+1}$.
If this occurs, note that $e_a-g_a \mod s_a=s_a+e_a-g_a$.
We have that 
	\[
	\frac{e_a}{s_a}<\frac{e_{a+1}}{s_{a+1}}
	\]  
and 
	\[
	\frac{s_a+e_a-g_a}{s_a}\geq \frac{e_{a+1}-g_{a+1}}{s_{a+1}}
	\]	
which creates a new ascent. 
Now suppose that  $e_a\leq g_a$ and $e_{a+1}> g_{a+1}$. 
Then we have $e_{a+1}-g_{a+1} \mod s_{a+1}=s_{a+1}+e_{a+1}-g_{a+1}$ and
	\[
	\frac{e_a}{s_a}\geq \frac{e_{a+1}}{s_{a+1}}
	\]
and 
	\[
	\frac{e_a-g_a}{s_a}<\frac{s_{a+1}+e_{a+1}-g_{a+1}}{s_{a+1}}
	\]
which removes an ascent. 
Given that $\g\in\Ib_{n,1}^{(\s)}$, we must have that if $g_a>e_a$, then $g_{a-1}>e_{a-1}$.
Likewise if $g_a\leq e_a$, then $g_{a+1}\leq e_{a+1}$.
If this doesn't occur, we necessarily introduce new ascents.
 Therefore, if $\asc(\e)=\asc(f)+1$, the above cases are the only possibilities.	
\colonelcomment{Perhaps additional stuff needs to be said here, but this is true.}

In case (1), suppose that $j-1=i_\ell$, the $\ell$th ascent. Now, under the bijection, we have the following lattice points for each inversion sequence:
	\[
	\lambda_\e=(\cdots,(\ell-1)\cdot s_{j-1}-e_{j-1},\ell\cdot s_j-e_j,\cdots q\cdot s_h-e_h,\cdots,k\cdot s_n-e_n,k)
	\]
	and
	\[
	\lambda_\f	=(\cdots,(\ell-1)\cdot s_{j-1}-e_{j-1},(\ell-1)\cdot s_j-e_j+g_j,\cdots (q-1)\cdot s_h-e_h+g_h,\cdots,(k-1)\cdot s_n-e_n,k-1)
	\]
	where the leading portions of each of the previous lattice point are identical. We also have
	\[
	\lambda_\g=(0,\cdots,0,s_j-g_j,\cdots,s_h-g_h,s_{h+1},\cdots,s_n,1)
	\]	
Notice that it is clear that $\lambda_\e=\lambda_\f+\lambda_\g$.
	
For case (2), consider an index $j\leq k\leq i$ which occurs before the new ascent.
Suppose that $i_\ell<j\leq i_{\ell+1}$. 
Notice that $e_k-g_k\bmod s_k=s_k+e_k-g_k$. 
Now, when we consider the lattice points we note that $\lambda_{\g_k}=s_k-g_k$, $\lambda_{\f_k}=\ell s_k-(s_k+e_k-g_k)$, and $\lambda_{\e_k}=\ell s_k-e_k$. 
It is clear that $\lambda_{\e_k}=\lambda_{\g_k}+\lambda_{\f_k}$. 
For an index $k$ such that $i+1\leq k \leq h$, the argument follows precisely the same as in case (1) and we get $\lambda_\e=\lambda_\f+\lambda_\g$.

To show the other direction for either case, suppose that $\lambda_\e=\lambda_\f +\lambda_\g$ where $\e\in\Ib_{n,k}^{(\s)}$,  $\f\in\Ib_{n,k-1}^{(\s)}$ and $\g\in\Ib_{n,1}^{(\s)}$ and we want to show that $\e=\f+\g$. Let
	\[
	\lambda_\e=(\lambda_{\e_1},\cdots,\lambda_{\e_j},\lambda_{\e_{j+1}},\cdots,\lambda_{\e_{n}},k)
	\]
and 
	\[
	\lambda_\g=(0,\cdots, 0,\lambda_{\g_j},\lambda_{\g_{j+1}},\cdots,\lambda_{\g_n},1),
	\]	
and 
	\[
	\lambda_\f=(\lambda_{\e_1},\cdots,\lambda_{\e_j}-\lambda_{\g_j},\lambda_{\e_{j+1}}-\lambda_{\g_{j+1}},\cdots,\lambda_{\e_{n}}-\lambda_{\g_n},k-1).
	\]	

However, this follows because the reverse map $\Pi_{\Pbn}\to \Ibn$ gives us the following inversion sequences:
	\[
	\e=(-\lambda_{\e_1}\mod s_1,\cdots, -\lambda_{\e_j} \mod s_j,\cdots, -\lambda_{\e_n} \mod s_n)
	\]
	and
	\[
	\g=(0,\cdots,0,-\lambda_{\g_j}\mod s_j,\cdots,-\lambda_{\g_n}\mod s_n) 
	\]
and  
	\[
	\f=(-\lambda_{\e_1} \mod s_1,\cdots,-(\lambda_{\e_j}-\lambda_{\g_j})\mod s_j,\,\cdots,-(\lambda_{\e_{n}}-\lambda_{\g_n}) \mod s_n).
	\]
Subsequently, it is clear that $\e=\f+\g$. 
}

With this understanding of the arithmetic structure of $\Ibn$, we can now give a proof of the  characterization.

\begin{proof}[Proof of Theorem \ref{THM:LEVEL}]
Consider the semigroup algebra $k[\Pbn]\coloneqq k[\cone(\Pbn)\cap \Z^{n+1}]$. We recall that \rem{if ${k[\Pbn]}$ is a graded, \local, Cohen-Macaulay algebra with $\dim({k[P]})=d$, then }$k[\Pbn]$ is level if for some homogeneous system of parameters $\theta_1,\ldots,\theta_d$ of ${k[\Pbn]}$, all the elements of the graded vector space $\soc({k[\Pbn]}/(\theta_1,\ldots,\theta_d))$ are of the same degree.
Notice  that  $\Pbn$ is a simplex and let $\Pi_{\Pbn}$ denote the (half-open) fundamental parallelepiped.
Note that $\dim(k[\Pbn])=n+1$ and $k[\Pbn]$ has a natural homogeneous system of parameters, namely the monomials corresponding to the vertices, which we denote by $\theta_0,\theta_1,\ldots, \theta_n$.
 The quotient $k[\Pbn]/(\theta_0,\cdots,\theta_{n})$ contains precisely the equivalence classes of lattice points in $\Pi_{\Pbn}$. Let $m_1,\cdots,m_{\alpha} \in\Pi_{\Pbn}$ be the elements at height 1.
The socle $\operatorname{soc}(k[\Pbn]/(\theta_0,\cdots,\theta_{n}))$ are precisely the lattice points in $\lambda\in \Pi_{\Pbn}$ such that $\lambda+m_i\not\in\Pi_{\Pbn}$ for all $m_i$ by Lemma \ref{height1} and Theorem \ref{IDP}. By Lemma \ref{correspondence}, we know that semigroup addition corresponds to entry-wise addition on inversion sequences. 
Subsequently, this condition on inversion sequences is precisely the condition that only elements of highest degree in $\Pi_{\Pbn}$ are in $\soc((k[\Pbn]/(\theta_0,\cdots,\theta_{n}))$, which then must contain elements that are all the same degree.
  
\end{proof}

\subsection{Consequences of Theorem \ref{THM:LEVEL}}\label{sec:Level:Consequences}
First consider the following resulting inequalities given for the coefficients of $\s$-Eulerian polynomials.
\begin{corollary}
Let $\s=(s_1,s_2,\ldots,s_n)$ be a sequence such that $\Pbn$ is level. Then the coefficients of the $\s$-Eulerian polynomial $h^\ast(\Pbn,z)=1+h_1^\ast z+\cdots+h_r^\ast z^r$ satisfy  the inequalities $h_i^\ast\leq h_j^\ast h_{i+j}^\ast$ for all pairs $i$ and $j$ such that $h_{i+j}^\ast>0$.
\end{corollary}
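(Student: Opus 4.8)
The plan is to reduce the statement to the classical dimension-counting inequality for level algebras, applied to the Artinian reduction of $k[\Pbn]$. First I would pass to $A := k[\Pbn]/(\theta_0,\theta_1,\dots,\theta_n)$, where $\theta_0,\dots,\theta_n$ are the degree-one monomials corresponding to the vertices of $\Pbn$; since $\Pbn$ is an $n$-simplex these form a homogeneous system of parameters, so $A$ is Artinian. Because $\Pbn$ has the IDP (Theorem~\ref{IDP}), $k[\Pbn]$, and hence $A$, is standard graded, i.e.\ generated in degree $1$. By the fundamental-parallelepiped computation of the $h^\ast$-vector recalled in Section~\ref{sec:Background:Ehrhart}, the Hilbert function of $A$ is exactly the $h^\ast$-vector: $\dim_k A_i = h_i^\ast$, with $A_r \neq 0$ and $A_{r+1}=0$. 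Finally, since $\Pbn$ is level, $\soc(A)$ is concentrated in a single degree for this choice of system of parameters; as $A_{r+1}=0$ forces $A_r \subseteq \soc(A)$, that single degree must be $r$, so $\soc(A) = A_r$.

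The main step is the following elementary lemma, which I would prove directly: if $A$ is a standard graded Artinian $k$-algebra with top degree $r$ and $\soc(A) = A_r$, then $\dim_k A_i \leq (\dim_k A_j)(\dim_k A_{i+j})$ whenever $i+j \leq r$. To prove it, fix $x \in A_i$ with $x \neq 0$ and show by induction on $m$ that $A_m x \neq 0$ for all $m$ with $i+m \leq r$: the case $m=0$ is immediate, and given $z \in A_{m-1}$ with $zx \neq 0$ we have $zx \in A_{i+m-1}$ with $i+m-1 < r$, so $zx \notin A_r = \soc(A)$, hence $A_1(zx) \neq 0$; choosing $\ell \in A_1$ with $\ell z x \neq 0$ and noting $\ell z \in A_m$ (this uses standard gradedness) gives $A_m x \neq 0$. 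Taking $m = j$, multiplication induces an injective linear map $A_i \hookrightarrow \operatorname{Hom}_k(A_j, A_{i+j})$, so $\dim_k A_i \leq (\dim_k A_j)(\dim_k A_{i+j})$.

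To finish, I would observe that a standard graded Artinian algebra has Hilbert-function support equal to an initial segment $\{0,1,\dots,r\}$ (if $A_m = 0$ then $A_{m+1} = A_1 A_m = 0$), so the hypothesis $h_{i+j}^\ast > 0$ in the corollary is equivalent to $i + j \leq r$, which is exactly the range covered by the lemma; translating back via $\dim_k A_\bullet = h_\bullet^\ast$ yields $h_i^\ast \leq h_j^\ast h_{i+j}^\ast$. I do not expect a genuine obstacle here, since this is the standard level-algebra inequality; the only points requiring care are (i) that the vertex monomials really give a standard graded Artinian quotient, which is precisely where Theorem~\ref{IDP} is needed, and (ii) the identification $\soc(A) = A_r$ extracted from the definition of levelness. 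A purely combinatorial alternative would use Theorem~\ref{LevelChar} to construct an injection $\Ib_{n,i}^{(\s)} \hookrightarrow \Ib_{n,j}^{(\s)} \times \Ib_{n,i+j}^{(\s)}$ by iterated addition of height-one inversion sequences, but controlling the ascent statistic of the accumulated summand is delicate, so the algebraic route above is the one I would pursue.
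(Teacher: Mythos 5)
Your proof is correct and matches the paper's approach: the paper disposes of this corollary by citing \cite[Chapter III, Proposition 3.3]{StanleyGreenBook}, and your argument is exactly the standard proof of that cited level-algebra inequality, applied to the Artinian reduction of $k[\Pbn]$ by the vertex monomials. The points you take care of --- standard gradedness via Theorem \ref{IDP}, the identification $\dim_k A_i = h_i^\ast$ through the fundamental parallelepiped, and $\soc(A)=A_r$ from levelness --- are precisely the hypotheses needed to invoke (or, as you do, reprove) Stanley's proposition.
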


These inequalities follow from \cite[Chapter III. Proposition 3.3]{StanleyGreenBook} and provide additional information of the behavior of $\s$-Eulerian polynomials to complement the known log-concave inequalities from \cite{SavageVisontai}. It is worth noting that these inequalities need not be satisfied for arbitrary $\s$.  For example, the sequence $\s=(2,3,5,9)$ does not give rise to a level polytope as there exists no element $\f\in\Ib_{4,1}^{(2,3,5,9)}$ such that $\f+\e\in \Ib_{4,4}^{(2,3,5,9)}$ for the inversion sequence $\e=(1,1,2,4)\in\Ib_{4,3}^{(2,3,5,9)}$. Moreover, 
	\[
	h^\ast(\Pb_4^{(2,3,5,9)},z)=1+48z+154z^2+66z^3+z^4
	\] 
and we notice that $h_3^\ast >h_1^\ast h_4^\ast$.

In addition to the Gorenstein characterization given in Section \ref{sec:Gorenstein}, we also arrive at a different characterization by considering the following restriction of Theorem \ref{THM:LEVEL}.

\begin{corollary}\label{secondaryGorenstein}
Let $\s\in\Z_{\geq 1}^n$ and let $r=\max\{\asc(\e) \, : \, \e\in \Ibn \}$. Then $\Pbn$ is Gorenstein if and only if for any $\e\in\Ib_{n,k}^{(\s)}$ with $1\leq k <r$ there exists some $\e'\in \Ib_{n,1}^{(\s)}$ such that $(\e+\e')\in \Ib_{n,k+1}^{(\s)}$ and $|\Ib_{n,r}|=1$.
\end{corollary}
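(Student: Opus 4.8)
The plan is to deduce this from Theorem~\ref{LevelChar} together with the standard fact that a Cohen--Macaulay \local algebra is Gorenstein if and only if the socle of the quotient by a homogeneous system of parameters is one-dimensional (i.e.\ the Cohen--Macaulay type equals one; see \cite{BrunsHerzog}), reading the socle off from the description already obtained in the proof of Theorem~\ref{LevelChar}.

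First I would recall that setup. The algebra $k[\Pbn]$ is Cohen--Macaulay of dimension $n+1$, the vertex monomials $\theta_0,\dots,\theta_n$ form a homogeneous system of parameters, and the quotient $k[\Pbn]/(\theta_0,\dots,\theta_n)$ has $k$-basis indexed by the lattice points of $\Pi_{\Pbn}$. By Lemmas~\ref{height1} and~\ref{correspondence} these are in bijection with $\Ibn$, an inversion sequence $\e$ sitting in degree $\asc(\e)$, and semigroup addition of a degree-one point corresponds to entrywise addition $(\bmod\ s_i)$ of the associated element of $\Ib_{n,1}^{(\s)}$, with $\boldsymbol\lambda_\e+\boldsymbol\lambda_{\e'}$ landing back in $\Pi_{\Pbn}$ precisely when $\asc(\e+\e')=\asc(\e)+1$. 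Since $k[\Pbn]$ is generated in degree one by Theorem~\ref{IDP}, an element $\e\in\Ib_{n,k}^{(\s)}$ lies in $\soc\bigl(k[\Pbn]/(\theta_0,\dots,\theta_n)\bigr)$ if and only if there is no $\e'\in\Ib_{n,1}^{(\s)}$ with $\e+\e'\in\Ib_{n,k+1}^{(\s)}$. In particular $\Ib_{n,r}^{(\s)}\subseteq\soc$ unconditionally, because $\Ib_{n,r+1}^{(\s)}=\emptyset$.

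Now the equivalence itself. The polytope $\Pbn$ is Gorenstein exactly when $\dim_k\soc\bigl(k[\Pbn]/(\theta_0,\dots,\theta_n)\bigr)=1$. Given the inclusion $\Ib_{n,r}^{(\s)}\subseteq\soc$, this one-dimensionality holds if and only if $\soc=\Ib_{n,r}^{(\s)}$ and $|\Ib_{n,r}^{(\s)}|=1$. The condition $\soc=\Ib_{n,r}^{(\s)}$ says the socle contains no element in any degree $k$ with $1\le k<r$ — equivalently, by the socle description above, that for every such $k$ and every $\e\in\Ib_{n,k}^{(\s)}$ there is some $\e'\in\Ib_{n,1}^{(\s)}$ with $\e+\e'\in\Ib_{n,k+1}^{(\s)}$, which is precisely the hypothesis of Theorem~\ref{LevelChar}. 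Conversely, if that hypothesis holds and $|\Ib_{n,r}^{(\s)}|=1$, then $\soc=\Ib_{n,r}^{(\s)}$ is one-dimensional and $\Pbn$ is Gorenstein. This proves the corollary. Because the genuinely nontrivial input — the translation between the semigroup structure of $\Pi_{\Pbn}$ and entrywise addition of inversion sequences — is already packaged in Lemmas~\ref{height1} and~\ref{correspondence}, I expect no real obstacle here; the only point that must be stated with care is that $\Ib_{n,r}^{(\s)}$ always sits inside the socle, so that $|\Ib_{n,r}^{(\s)}|=1$ is \emph{forced} by Gorensteinness rather than being an extra, logically independent requirement.
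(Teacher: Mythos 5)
Your argument is correct and follows essentially the same route as the paper: both reduce the statement to the socle analysis underlying Theorem~\ref{LevelChar} together with the fact that Gorenstein means Cohen--Macaulay type one. The paper phrases the type as the number of generators of the canonical module, which for a level algebra equals the leading $h^\ast$-coefficient $|\Ib_{n,r}^{(\s)}|$, while you read it off as $\dim_k\soc\bigl(k[\Pbn]/(\theta_0,\dots,\theta_n)\bigr)$; these are the same invariant, and your version has the small merit of making explicit that $\Ib_{n,r}^{(\s)}\subseteq\soc$, so $|\Ib_{n,r}^{(\s)}|=1$ is forced by Gorensteinness rather than being an independent hypothesis.
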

\begin{proof}
$\Pbn$ is Gorenstein if and only if $\Pbn$ is level with exactly one canonical module generator. The canonical module of $k[\Pbn]$ for $\Pbn$ level has $|\Ib_{n,r}^{(\s)}|$ generators, as this is the leading coefficient of the $h^\ast$ polynomial of $\Pbn$.  
\end{proof}

We should note that in general Corollary~\ref{secondaryGorenstein} is less computationally useful than Theorem~\ref{GorensteinChar}. 
However, it is unexpected that conditions from Corollary~\ref{secondaryGorenstein}  and Theorem~\ref{GorensteinChar} are equivalent when there exists an index $i$ such that $\gcd(s_{i-1},s_{i})=1$. 
Moreover, Corollary \ref{secondaryGorenstein} has the added benefit of providing a characterization with no explicit restrictions on $\s$.

In the case of $\s\in\Zpos^2$, the conditions of Theorem \ref{THM:LEVEL} must always be satisfied. Therefore, we have the following result. 

\begin{corollary}\label{twodim}
The $\s$-lecture hall polytope $\Pb_2^{(s_1,s_2)}$ is level for any $\s=(s_1,s_2)$.
\end{corollary}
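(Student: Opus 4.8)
The plan is to invoke Theorem~\ref{LevelChar} and check its condition by hand in dimension two. Since for $\e\in\Ib_2^{(\s)}$ the ascent set lies in $\{0,1\}$ (with the conventions $s_0=1$, $e_0=0$), we always have $\asc(\e)\le 2$, so $r\le 2$. If $r\le 1$ the condition of Theorem~\ref{LevelChar} is vacuous and $\Pb_2^{(s_1,s_2)}$ is level, so we may assume $r=2$, i.e.\ $\Ib_{2,2}^{(\s)}\ne\emptyset$, and then only the case $k=1$ survives. Hence it suffices to prove: for every $\e\in\Ib_{2,1}^{(\s)}$ there is $\e'\in\Ib_{2,1}^{(\s)}$ with $\e+\e'\in\Ib_{2,2}^{(\s)}$.

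First I would record the explicit descriptions forced by the ascent conditions. Writing $m:=\lfloor s_2/s_1\rfloor$, the set $\Ib_{2,1}^{(\s)}$ is the union of the pairs $(a,b)$ with $1\le a\le s_1-1$ and $0\le b\le\lfloor s_2 a/s_1\rfloor$ (single ascent at position $0$) and the pairs $(0,b)$ with $1\le b\le s_2-1$ (single ascent at position $1$), while $\Ib_{2,2}^{(\s)}$ is the set of pairs $(a,b)$ with $1\le a\le s_1-1$ and $\lfloor s_2 a/s_1\rfloor+1\le b\le s_2-1$. Since $a\mapsto\lfloor s_2 a/s_1\rfloor$ is nondecreasing, $r=2$ is equivalent to $m\le s_2-2$, and in that case $s_1\ge 2$ and $(1,b)\in\Ib_{2,2}^{(\s)}$ for every $b$ in the nonempty window $W:=\{m+1,\dots,s_2-1\}$.

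Then I would build $\e'$ according to the location of the ascent of $\e=(e_1,e_2)$. If $e_1=0$ (ascent at position $1$, so $e_2\ge 1$), take $\e'=(1,b')$ with $0\le b'\le m$ (this is a type-$0$-ascent element of $\Ib_{2,1}^{(\s)}$) and choose $b'$ so that $(e_2+b')\bmod s_2\in W$; this is possible because as $b'$ runs over $\{0,\dots,m\}$ the residue $(e_2+b')\bmod s_2$ runs over a block of $m+1$ consecutive residues, and such a block is disjoint from $W$ only if it equals the complement $\{0,\dots,m\}$, which would force $e_2\equiv 0\pmod{s_2}$, impossible for $1\le e_2\le s_2-1$. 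If $e_1\ge 1$ (ascent at position $0$), I would aim for $\e+\e'$ to have first coordinate $1$, i.e.\ choose the first coordinate $a'$ of $\e'$ with $e_1+a'\equiv 1\pmod{s_1}$ (so $a'=0$ and $\e'=(0,b')$ when $e_1=1$, and $a'=s_1+1-e_1\in\{2,\dots,s_1-1\}$ with $\e'=(a',b')$ of type-$0$-ascent when $e_1\ge 2$), and then fix $b'$ by the same interval-covering argument applied to the appropriate block of residues modulo $s_2$.

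The main obstacle is making sure $\e'$ itself has exactly one ascent while $\e+\e'$ lands in $\Ib_{2,2}^{(\s)}$; the interval-covering argument handles this automatically except in the degenerate regime where $s_1$ is much larger than $s_2$ and $\e$ sits near the corner $(s_1-1,0)$. There one cannot push the first coordinate of $\e+\e'$ down to $1$, and instead one must exploit the staircase behaviour of $a\mapsto\lfloor s_2 a/s_1\rfloor$: choosing $a'$ at a jump of this function produces a valid target $(a,b)=\e+\e'\in\Ib_{2,2}^{(\s)}$ with $b\le\lfloor s_2 a'/s_1\rfloor$, keeping $\e'$ single-ascent. (Alternatively, via Lemma~\ref{height1}, Lemma~\ref{correspondence}, and the socle description, the statement is equivalent to: every height-$1$ lattice point of $\Pi_{\Pb_2^{(s_1,s_2)}}$ is dominated coordinate-wise, in the vertex-cone-generator basis, by some height-$2$ lattice point; using the coordinates $\eta_0=1-\lambda_2/s_2$, $\eta_1=\lambda_2/s_2-\lambda_1/s_1$, $\eta_2=\lambda_1/s_1$ of $(\lambda_1,\lambda_2,1)$ this reduces to the same finite check. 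I expect the combinatorial route to be the cleanest to write out, with the $s_1\gg s_2$ corner case being the only part that genuinely requires care.)
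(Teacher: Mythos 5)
Your proposal follows the same skeleton as the paper's proof: reduce Corollary~\ref{twodim} to the hypothesis of Theorem~\ref{LevelChar} (only $k=1$ matters, and only when $r=2$), and then exhibit, for each $\e\in\Ib_{2,1}^{(\s)}$, an explicit $\e'\in\Ib_{2,1}^{(\s)}$ with $\e+\e'\in\Ib_{2,2}^{(\s)}$. Your descriptions of $\Ib_{2,1}^{(\s)}$, $\Ib_{2,2}^{(\s)}$, the window $W$, the criterion $r=2\Leftrightarrow m\le s_2-2$, and the cyclic interval-covering argument in the $e_1=0$ case and the generic $e_1\ge 1$ cases are all correct. The organizational difference is that the paper first assumes $s_1\le s_2$ without loss of generality (legitimate, since the affine unimodular map $(x,y)\mapsto(s_2-y,s_1-x)$ carries $\Pb_2^{(s_1,s_2)}$ onto $\Pb_2^{(s_2,s_1)}$, though the paper does not spell this out) and then finishes with a short explicit subcase construction; that reduction removes exactly the $s_1>s_2$ corner regime, which is the only place where your argument runs into trouble.

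That corner case is the one genuine gap: it is asserted, not proved. Concretely, your covering argument fails precisely when $e_2=0$, $e_1\ge 2$ and $\lfloor s_2(s_1+1-e_1)/s_1\rfloor=m$, which forces $s_2<s_1$ (so $m=0$) and $s_1-e_1+1<s_1/s_2$. Saying ``choose $a'$ at a jump of $a\mapsto\lfloor s_2a/s_1\rfloor$'' is not yet an argument: you must also guarantee $a'\le s_1-1$, that wrap-around occurs in the first coordinate, and above all that the first coordinate of $\e+\e'$, namely $e_1+a'-s_1$, is at least $1$ (otherwise the sum has only one ascent); ``a jump'' by itself ensures none of this. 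The claim is true and can be closed as follows: since $s_2\ge 2$ the value of $\lfloor s_2 a/s_1\rfloor$ rises from $0$ at $a=1$ to $s_2-1\ge 1$ at $a=s_1-1$, so jumps exist in $\{2,\dots,s_1-1\}$, and every jump location is at least $\lceil s_1/s_2\rceil>s_1-e_1+1$ in this regime; taking such an $a'$ and $b'=\lfloor s_2a'/s_1\rfloor$ gives $\e'=(a',b')\in\Ib_{2,1}^{(\s)}$ and $\e+\e'=(e_1+a'-s_1,\,b')\in\Ib_{2,2}^{(\s)}$, because $b'>s_2(a'-1)/s_1\ge s_2(e_1+a'-s_1)/s_1$ and $e_1+a'-s_1\ge 1$. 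Either this verification, or the reversal symmetry that renders it unnecessary, has to be written down. A much smaller unchecked point of the same kind occurs in your $e_1=1$ subcase: the covering argument could only fail there if $W=\{e_2\}$, i.e.\ $m=s_2-2$ and $e_2=s_2-1$, and one needs the extra observation that $(1,s_2-1)\in\Ib_{2,1}^{(\s)}$ would force $s_1=s_2=2$, contradicting $r=2$.
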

\begin{remark}
By \cite[Proposition 1.2]{HigashitaniYaganawa}, every lattice polygon is level. We state this result only to illustrate that one can explicitly use Theorem \ref{THM:LEVEL} to determine levelness, especially in small dimensions.
\end{remark}

\commentout{
\begin{proof}
Without loss of generality, suppose that $s_1\leq s_2$. First note that if $\Ib_{2,2}^{(s_1,s_2)}=\emptyset$, then levelness is trivial. 
This trivial case consists of sequences $\s=(1,s_2)$ and $\s=(2,2)$, which can be concretely shown to not have any inversion sequence with 2 ascents.
So, if $\Ib_{2,2}^{(s_1,s_2)}\neq\emptyset$, we have two cases: either  (i) $2\leq s_1<s_2$, or (ii) $3\leq s_1=s_2$.
In both cases, given $\e\in\Ib_{2,1}^{(s_1,s_2)}$ we will explicitly construct $\f\in\Ib_{2,1}^{(s_1,s_2)}$ such that $\e+\f\in\Ib_{2,2}^{(s_1,s_2)}$.

In case (i), take $\e=(e_1 , e_2)\in \Ib_{2,1}^{(s_1,s_2)}$. We have three possible subcases:
	\begin{itemize}
	\item Suppose that $e_1\geq 1$ and $e_2=0$. Let $\f=(f_1 , f_2)$ where $f_1=0$ and $f_2=s_2-1$. Then $\e+\f=(e_1 ,s_2-1) \in\Ib_{2,2}^{(s_1,s_2)}$ because 
		\[
		\frac{e_1}{s_1}\leq \frac{s_1-1}{s_1}<\frac{s_2-1}{s_2}.
		\]
	\item Suppose that $e_1\geq 1$ and $e_2\geq 1$. Note that $\e\in\Ib_{2,1}^{(s_1,s_2)}$ implies that $e_2<s_2-1$  because we have 
		\[
		0<\frac{e_1}{s_1}\leq \frac{s_1-1}{s_1}<\frac{s_2-1}{s_2}.
		\]
		So, if $e_2=s_2-1$, then $\Asc((e_1,e_2))=\{0,1\}$ which contradicts that $\e\in\Ib_{2,1}^{(\s)}$.
	 Let $\f= (0 , s_2-e_2-1)$. Then  $\e+\f=(e_1 , s_2-1)\in\Ib_{2,2}^{(s_1,s_2)}$ as shown above.	
	
	\item Suppose that $e_1=0$ and $e_2\geq 1$. Let $\f=(1,\min\{\lfloor \frac{s_2}{s_1}\rfloor, s_2-e_2-1\})\in \Ib_{2,1}^{(s_1,s_2)}$. Then
	\[
	\e+\f=
	\begin{cases}
	(1,s_2-1) & \mbox{ if  } s_2-e_2-1\leq \lfloor \frac{s_2}{s_1}\rfloor,\\
	(1,e_2+\lfloor \frac{s_2}{s_1}\rfloor) & \mbox{ if } \lfloor \frac{s_2}{s_1}\rfloor< s_2-e_2-1.
	\end{cases}
	\]
	If the first case is true, then clearly $\e+\f\in\Ib_{2,2}^{(s_1,s_2)}$ by previous arguments. In the second case, notice that 
	\[
	\frac{1}{s_1}<\frac{\lfloor \frac{s_2}{s_1}\rfloor+1}{s_2}\leq \frac{\lfloor \frac{s_2}{s_1}\rfloor+e_2}{s_2} 
	\]
	and hence $\e+\f\in\Ib_{2,2}^{(s_1,s_2)}$.
	\end{itemize}
Now for case (ii), take $\e=(e_1 , e_2)\in \Ib_{2,1}^{(s_1,s_2)}$. We have several possible subcases:
	\begin{itemize}
	\item Suppose that $e_1=0$ and $e_2\geq 1$. If $e_2>1$, let $\f=(1,0)\in \Ib_{2,1}^{(s_1,s_2)}$ and we have $\e+\f=(1,e_2) \in \Ib_{2,2}^{(s_1,s_2)}$. If $e_2=1$, then let $\f=(1,1)\in \Ib_{2,1}^{(s_1,s_2)}$ and we have $\e+\f=(1,2) \in \Ib_{2,2}^{(s_1,s_2)}$.
	\item Suppose that $e_1\geq 1$ and $e_2=0$. If $e_1<s_1-1$, let $\f=(0,s_2-1)\in \Ib_{2,1}^{(s_1,s_2)}$ and we have that $\e+\f=(e_1,s_2-1)\in \Ib_{2,2}^{(s_1,s_2)}$. If $e_1=s_1-1$, then let $\f=(s_1-1,s_2-1)\in\Ib_{2,1}^{(s_1,s_2)}$ and we get that $\e+\f=(s_1-2,s_2-1)\in\Ib_{2,2}^{(s_1,s_2)}$.
	\item Suppose that $e_1\geq 1$ and $e_2\geq 1$. Note that $e_1\geq e_2$. If $e_1<s_1-1$, then let $\f=(0,s_2-e_2-1)\in\Ib_{2,1}^{(s_1,s_2)}$ and we have that $\e+\f=(e_1,s_2-1)\in\Ib_{2,2}^{(s_1,s_2)}$. If $e_1=s_1-1$, the let $\f=(s_1-1,s_2-e_2-1)\in \Ib_{2,1}^{(s_1,s_2)}$ and we get that $\e+\f=(s_1-2,s_2-1)\in \Ib_{2,2}^{(s_1,s_2)}$.
	\end{itemize}
Therefore, we satisfy the conditions of Theorem \ref{thm:charachterizationOfLevel} in all cases.
\end{proof}}

The characterization from Theorem~\ref{THM:LEVEL} allows for the construction of new level $\s$-lecture hall polytopes through the following corollaries.

\begin{corollary}\label{1corolary}
The $\s$-lecture hall polytope $\Pbn$ is level if and only if the $\s$-lecture hall polytope $\Pb_{n+1}^{(1,\s)}$ is level.
\end{corollary}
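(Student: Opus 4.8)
The plan is to reduce the statement to the criterion of Theorem~\ref{LevelChar} by building an explicit bijection between the inversion sequences of $(1,\s):=(1,s_1,\dots,s_n)$ and those of $\s$ that preserves both the ascent statistic and the (modular) addition of inversion sequences. First I would observe that any $\e=(e_1,\dots,e_{n+1})\in\Ib_{n+1}^{(1,\s)}$ has $e_1=0$, since $0\le e_1<1$. This lets me define $\psi\colon \Ib_{n+1}^{(1,\s)}\to\Ibn$ by $\psi(0,e_2,\dots,e_{n+1})=(e_2,\dots,e_{n+1})$, which is plainly a bijection, with inverse prepending a $0$.

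Next I would verify that $\psi$ preserves ascent numbers. Using the convention $s_0=1$, $e_0=0$: for $\e\in\Ib_{n+1}^{(1,\s)}$ position $0$ gives $e_0/s_0=0$ and $e_1=0$, so $0\notin\Asc(\e)$; position $1$ gives $e_1=0<e_2/s_1$ iff $e_2>0$, which is exactly the condition that position $0$ lie in $\Asc(\psi(\e))$; and for $i\ge 2$ the inequality $e_i/s_{i-1}<e_{i+1}/s_i$ is verbatim the condition for position $i-1$ of $\psi(\e)$. Hence $\Asc(\e)$ is $\Asc(\psi(\e))$ shifted up by one, so $\asc(\e)=\asc(\psi(\e))$; in particular $\psi$ restricts to bijections $\Ib_{n+1,k}^{(1,\s)}\to\Ib_{n,k}^{(\s)}$ for every $k$, and the two sequences $(1,\s)$ and $\s$ share the same value of $r=\max\{\asc(\e)\}$. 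I would then note that $\psi$ is compatible with addition of inversion sequences: in a sum inside $\Ib_{n+1}^{(1,\s)}$ the first coordinate is $0+0\bmod 1=0$ and the remaining coordinates are reduced mod $s_1,\dots,s_n$ exactly as in $\Ibn$, so $\psi(\e+\f)=\psi(\e)+\psi(\f)$.

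Finally I would feed this into Theorem~\ref{LevelChar}. The level criterion for $\Pb_{n+1}^{(1,\s)}$ states that for every $\e\in\Ib_{n+1,k}^{(1,\s)}$ with $1\le k<r$ there is an $\e'\in\Ib_{n+1,1}^{(1,\s)}$ with $\e+\e'\in\Ib_{n+1,k+1}^{(1,\s)}$. Applying $\psi$ and using the three properties above (bijectivity, $\asc$-preservation level by level, additivity), this translates word for word into the level criterion for $\Pbn$, and conversely. Hence $\Pb_{n+1}^{(1,\s)}$ is level if and only if $\Pbn$ is level. I do not expect a genuine obstacle: the only delicate point is the boundary bookkeeping with the $s_0=1$, $e_0=0$ conventions and the degenerate coordinate $e_1=0$, which must be handled carefully so that the ascent count is preserved exactly (and not off by one in a way that would shift $r$ or the range of $k$ in Theorem~\ref{LevelChar}). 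A slightly longer but equally valid alternative is to run the same argument geometrically through the fundamental parallelepiped using Lemmas~\ref{height1} and~\ref{correspondence}, but the direct bijection on inversion sequences is the most economical route.
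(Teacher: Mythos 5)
Your argument is correct and is essentially the paper's own proof: the paper likewise notes that every $\e\in\Ib_{n+1}^{(1,\s)}$ has the form $(0,\e')$ with $\e'\in\Ibn$ and that the criterion of Theorem~\ref{LevelChar} for $\e$ is equivalent to that for $\e'$. Your write-up simply makes explicit the ascent-shift and additivity checks that the paper leaves implicit.
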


\begin{proof}
We can express any inversion sequence $\e\in\Ib_{n+1}^{(1,\s)}$ as 
	\[
	\e=(0 , \e') 
	\]
where $\e'\in\Ibn$. Thus, $\e$ satisfies the conditions of Theorem \ref{THM:LEVEL} exactly when $\e'$ satisfies the conditions.	
\end{proof}

\begin{remark}
One also has that $\Pbn$ is level if and only if $\Pb_{n+1}^{(\s,1)}$ is level by applying an analogous argument. Corollary~\ref{1corolary} can also be directly proven, as $\Pb_{n+1}^{(1,\s)}$ is the lattice pyramid over $\Pbn$, which is level if and only if $\Pbn$ is level. 
\end{remark}

\begin{corollary}\label{freeproductresult}
If both $\Pbn$ and $\Pb_m^{(\t)}$ are level, then $\Pb_{n+m+1}^{(\s,1,\t)}$ is level.
\end{corollary}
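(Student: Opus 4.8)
The plan is to verify the criterion of Theorem~\ref{LevelChar} for $(\s,1,\t):=(s_1,\dots,s_n,1,t_1,\dots,t_m)$ by reducing each required inversion-sequence addition to the corresponding one for $\s$ or for $\t$. The first step is a bookkeeping lemma on how inversion sequences of $(\s,1,\t)$ split across the middle entry. Since the $(n+1)$st modulus is $1$, every $\e\in\Ib_{n+m+1}^{(\s,1,\t)}$ has $e_{n+1}=0$, so it factors uniquely as $\e=(\e^{(1)},0,\e^{(2)})$ with $\e^{(1)}\in\Ibn$ and $\e^{(2)}\in\Ib_m^{(\t)}$. Unwinding $\Asc$ with the convention $s_0=1$, $e_0=0$: the ascents of $\e$ among positions $0,\dots,n-1$ are exactly $\Asc(\e^{(1)})$; position $n$ is never an ascent, as $e_n/s_n\ge 0 = 0/1$; and the ascents of $\e$ among positions $n+1,\dots,n+m$ match $\Asc(\e^{(2)})$, because the test at position $n+1$ reads $0/1<e_{n+2}/t_1$, which is precisely the position-$0$ ascent test for $\e^{(2)}$. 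Hence $\asc(\e)=\asc(\e^{(1)})+\asc(\e^{(2)})$, so $r:=\max\{\asc(\e):\e\in\Ib_{n+m+1}^{(\s,1,\t)}\}$ equals $r_1+r_2$, where $r_1$ and $r_2$ are the analogous maxima for $\s$ and $\t$. Finally, addition respects the factorization: $(\e^{(1)},0,\e^{(2)})+(\f^{(1)},0,\f^{(2)})=(\e^{(1)}+\f^{(1)},0,\e^{(2)}+\f^{(2)})$, since $0+0\bmod 1=0$.

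With this in hand, fix $\e=(\e^{(1)},0,\e^{(2)})\in\Ib_{n+m+1,k}^{(\s,1,\t)}$ with $1\le k<r$, and set $k_i=\asc(\e^{(i)})$, so $k=k_1+k_2$ and $k_i\le r_i$. From $k<r_1+r_2$ we cannot have both $k_1=r_1$ and $k_2=r_2$; say $k_1<r_1$ (the case $k_2<r_2$ being symmetric). I claim there is $\f^{(1)}\in\Ib_{n,1}^{(\s)}$ with $\e^{(1)}+\f^{(1)}\in\Ib_{n,k_1+1}^{(\s)}$: if $k_1\ge 1$, this is exactly the conclusion of Theorem~\ref{LevelChar} applied to the level polytope $\Pbn$; if $k_1=0$, then $\e^{(1)}=\boldsymbol 0$ (the only inversion sequence with no ascents, by the $s_0=1$, $e_0=0$ convention), and any $\f^{(1)}\in\Ib_{n,1}^{(\s)}$ works, such an $\f^{(1)}$ existing because $r_1\ge 1$. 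Now put $\e':=(\f^{(1)},0,\boldsymbol 0)$. By additivity of $\asc$, $\asc(\e')=\asc(\f^{(1)})+\asc(\boldsymbol 0)=1$, so $\e'\in\Ib_{n+m+1,1}^{(\s,1,\t)}$; and $\e+\e'=(\e^{(1)}+\f^{(1)},0,\e^{(2)})$ has ascent number $(k_1+1)+k_2=k+1$, i.e.\ $\e+\e'\in\Ib_{n+m+1,k+1}^{(\s,1,\t)}$. The symmetric case uses $\e':=(\boldsymbol 0,0,\f^{(2)})$ together with the levelness of $\Pb_m^{(\t)}$. Since $\e$ was arbitrary, Theorem~\ref{LevelChar} gives that $\Pb_{n+m+1}^{(\s,1,\t)}$ is level.

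The only delicate point is the first paragraph: one must check carefully that the middle $0$ creates no ascent to its left and that the ascent it may create to its right is correctly identified with the leading ascent test of $\e^{(2)}$. After that, the argument is a clean case split, though one should keep in mind the degenerate sub-cases $k_i=0$ (where the relevant block of $\e$ is the zero vector, so Theorem~\ref{LevelChar} is not literally invoked but the needed step is immediate) and $r_1=0$ or $r_2=0$ (forcing that block's moduli all to be $1$), which are absorbed by the same dichotomy.
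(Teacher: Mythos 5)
Your proof is correct and follows essentially the same route as the paper: decompose each inversion sequence of $(\s,1,\t)$ as $(\e^{(1)},0,\e^{(2)})$ with $\e^{(1)}\in\Ibn$, $\e^{(2)}\in\Ib_m^{(\t)}$, observe that ascents add across the middle entry, and verify the criterion of Theorem~\ref{LevelChar} blockwise using levelness of $\Pbn$ and $\Pb_m^{(\t)}$. The paper's proof is just a terser version of this; your write-up supplies the bookkeeping (ascent additivity, the dichotomy $k_1<r_1$ or $k_2<r_2$, and the degenerate cases $k_i=0$) that the paper leaves implicit.
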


\begin{proof}
Any inversion sequence $\e\in\Ib_{n+m+1}^{(\s,1,\t)}$ can expressed as 
	\[
	\e=(\e_1 , 0 , \e_2)
	\]
where $\e_1\in\Ibn$ and $\e_2\in\Ib_m^{(\t)}$. Subsequently, $\e$ satisfies the conditions of Theorem \ref{THM:LEVEL} when $\e_1$ and $\e_2$ both satisfy the conditions of Theorem \ref{THM:LEVEL}.  	
\end{proof}

\begin{remark}
$\s$-lecture hall polytopes similar to the ones in Corollary~\ref{1corolary} and \ref{freeproductresult} have been studied, e.g., in \cite{LiuSolus}. In \cite[Theorem 4.3]{LiuSolus}, the authors show that in every dimension $n \geq 3$, there is an $\s$-lecture hall polytope $\Pbn$ with $\s = (1,\dots,1,a,1,\dots,1,b,1\dots,1)$ with $k_1 \geq 0$ many leading, $k_2 \geq 1$ many intermediate, and $k_3 \geq 0$ many trailing $1$'s such that the Ehrhart polynomial $i(\Pbn,t)$ has at least one negative coefficient.
\end{remark}
\begin{remark}
It is worth noting that by combining Corollary \ref{twodim} and Corollary \ref{freeproductresult} we can create an infinite family of level $\s$-lecture hall polytopes of arbitrary dimension. In particular, $\Pbn$ is level when $\s$ is any sequence satisfying $s_i=1$ when $i=0\bmod 3$.
\end{remark}

\section{Concluding remarks and future directions}\label{sec:Concluding}
There are two immediate avenues to continue this work, namely providing a more geometric classification of the Gorenstein property in the case $\gcd(s_{i-1},s_{i})\geq 2$ for all $i$ and using the levelness characterization to produce more tractable results in special cases.

With regards to the Gorenstein characterization, extensive computational evidence --- using the Normaliz software \cite{Normaliz} --- suggests that $\gcd(s_i,s_{i+1})=1$ may not be necessary.
We have the following conjecture:

\begin{conjecture}\label{GorConjecture}
Let $\s\in \Z_{\geq 1}^n$ be any sequence. The polytope $\Pbn$ is Gorenstein if and only if for all $j \geq 2$
\begin{equation*}
\rem{c_j \coloneqq }\frac{\gcd(s_{j-1},s_j)}{s_{j-1}} + \sum_{k=1}^{j-1} \frac{\gcd(s_{k-1},s_k)}{s_{k-1}s_k} \quad \text{and } \quad \rem{d_j \coloneqq }\frac{\gcd(s_{n-j+2},s_j)}{s_{n-j+1}} + \sum_{k=1}^{j-1} \frac{\gcd(s_{n-k+2},s_{n-k+1})}{s_{n-k+2}s_{n-k+1}}
\end{equation*}
are integers where $s_0 = s_{n+1}=1$.
%%%%%
\rem{Let $\s=(s_1,s_2,\ldots,s_n)$ be any sequence.  Then $\Pbn$ is Gorenstein if and only if there exist $\c,\d\in\Z^n$ satisfying
	\[
	c_js_{j-1}=c_{j-1}s_j+\gcd(s_j,s_{j+1})
	\]
and 
	\[
	d_j\overleftarrow{s_{j-1}}=d_{j-1}\overleftarrow{s_j}+\gcd(\overleftarrow{s_j},\overleftarrow{s_{j+1}})
	\]	
for $j>1$ with $c_1=d_1=1$.
}	
%%%%
\end{conjecture}

Unfortunately, the condition $\gcd(s_{i-1},s_{i})=1$ for some $i$ is necessary for our current method of proof.
It is worth noting that examples of Gorenstein $\Pbn$ with the property that  $\gcd(s_{i-1},s_{i})\geq 2$ seem to be rare. In fact, most examples are well structured so that reductions can be made to utilize the Theorem~\ref{GorensteinChar}. For example, the sequence $\s=(2,4,\ldots,2n)$ produces a Gorenstein polytope and satisfies the condition of Conjecture~\ref{GorConjecture}. However, we can also realize $\Pb_n^{(2,4,\ldots,2n)}=2\cdot \Pb_n^{(1,2,\ldots,n)}$, and $\Pb_n^{(1,2,\ldots,n)}$ is Gorenstein by the classification and it is easy to see that $h^\ast_{n-1}(\Pb_{n}^{(1,2,\ldots,n)})\neq 0$ and $h^\ast_n(\Pb_n^{(1,2,\ldots,n)})=0$. These conditions together with results in \cite{DeNegriHibi} all imply that $\Pb_n^{(2,4,\ldots,2n)}$ must be a Gorenstein polytope as well.
In fact, we have not found an example of a Gorenstein $\Pbn$ with $\gcd(s_{i-1},s_{i})\geq 2$ that cannot alternatively be shown to be Gorenstein in a similar way.

Using the levelness characterization to produce more tractable results in special cases may prove fruitful.
Based on experimental evidence, we have the following conjecture for levelness in a large family of $\s$-lecture hall polytopes:

\begin{conjecture}
Let $\s\in\Zpos^n$ be a sequence such that there exists some $\c\in\Z^n$ satisfying
	\[
	c_js_{j-1}=c_{j-1}s_j +\gcd(s_{j-1},s_j)
	\]
for $j>1$ with $c_1=1$. Then $\Pbn$ is level.
\end{conjecture}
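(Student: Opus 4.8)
The plan is to deduce the conjecture from Theorem~\ref{LevelChar} (or, equivalently, from the canonical-module description of levelness), using that the hypothesis says exactly that the vertex cone $\Cc_n^{(\s)}=T_{\boldsymbol 0}(\Pbn)$ is Gorenstein with some Gorenstein point $\c\in\Z^n$, by Theorem~\ref{conesGor}.

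\textbf{Setting up the geometry.} Since $\Cc_n^{(\s)}$ is the image of $\cone(\Pbn)=\Cc_{n+1}^{(\s,1)}$ under projection along the vertex $(\boldsymbol 0,1)$, the relative interior of $\cone(\Pbn)$ at height $h$ consists precisely of the points $(\c+\boldsymbol\mu,h)$ with $\boldsymbol\mu\in\Cc_n^{(\s)}\cap\Z^n$ and $c_n+\mu_n<hs_n$ (here one uses that $\c$ generates the interior of $\Cc_n^{(\s)}$). From this one reads off that the codegree of $\Pbn$ equals $\lfloor c_n/s_n\rfloor+1$, so $r=n-\lfloor c_n/s_n\rfloor$. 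By Stanley's description of $\omega_{k[\Pbn]}$ together with IDP (Theorem~\ref{IDP}), $\Pbn$ is level if and only if every interior point of $\cone(\Pbn)$ at a height $h>\lfloor c_n/s_n\rfloor+1$ is a non-generator of $\omega_{k[\Pbn]}$, that is, is the sum of an interior point at height $h-1$ and a lattice point of $\Pbn$.

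\textbf{The descent.} Fix an interior point $(\c+\boldsymbol\mu,h)$ with $h>\lfloor c_n/s_n\rfloor+1$. If $\mu_n<s_n$ then $\boldsymbol\mu\in\Pbn\cap\Z^n$ and $(\c+\boldsymbol\mu,h)=(\c,h-1)+(\boldsymbol\mu,1)$, the first summand being interior because $h-1>c_n/s_n$. If $\mu_n\ge s_n$ it suffices to produce $\boldsymbol\lambda\in\Pbn\cap\Z^n$ lying on the facet $\{\lambda_n=s_n\}$ with $\boldsymbol\mu-\boldsymbol\lambda\in\Cc_n^{(\s)}\cap\Z^n$; then $(\c+(\boldsymbol\mu-\boldsymbol\lambda),h-1)+(\boldsymbol\lambda,1)$ works, since $c_n+\mu_n-s_n<(h-1)s_n$. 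I would build $\boldsymbol\lambda$ greedily from the top: set $\lambda_n=s_n$ and, descending, choose $\lambda_j$ subject to $0\le\lambda_j/s_j\le\lambda_{j+1}/s_{j+1}$ and $(\mu_j-\lambda_j)/s_j\le(\mu_{j+1}-\lambda_{j+1})/s_{j+1}$; the real interval of admissible $\lambda_j$ is always nonempty, precisely because $\mu_j/s_j\le\mu_{j+1}/s_{j+1}$.

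\textbf{Main obstacle.} The crux is showing the greedy construction can be carried out over $\Z$, i.e.\ that at no step do the relevant floors and ceilings collide. This is exactly where the recursion $c_js_{j-1}=c_{j-1}s_j+\gcd(s_{j-1},s_j)$ must enter; without it the conclusion fails (for instance $\s=(2,3,5,9)$, whose cone is not Gorenstein). I expect one needs to track $\mu_j-\lambda_j$ together with the residues of the $c_j$ modulo $s_j$ and use the recursion to certify integrality at each step. Translating the finished argument through Lemmas~\ref{height1} and~\ref{correspondence} would then recover the inversion-sequence form of Theorem~\ref{LevelChar}, where the same obstacle resurfaces as controlling how $\Asc(\e+\e')$ arises from $\Asc(\e)$. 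A complementary, purely algebraic observation dispatches one sub-case: scaling $\s\mapsto d\s$ leaves the Gorenstein point $\c$ unchanged and $\Pb_n^{(d\s)}=d\Pbn$, so if $\s=d\s'$ with $\Pb_n^{(\s')}$ Gorenstein then $k[\Pbn]$ is the $d$-th Veronese of a level algebra and hence level; the genuine content of the conjecture lies in the sequences not of this form.
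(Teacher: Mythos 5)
The statement you are proving is not a theorem of the paper: it appears in Section~\ref{sec:Concluding} as a conjecture supported only by computational evidence, so there is no proof in the paper to compare against. Your reduction is sensible and its first stages are correct: by Theorem~\ref{conesGor} the hypothesis says the vertex cone $\Cc_n^{(\s)}$ is Gorenstein with Gorenstein point $\c$, the interior lattice points of $\cone(\Pbn)$ at height $h$ are exactly the points $(\c+\boldsymbol\mu,h)$ with $\boldsymbol\mu\in\Cc_n^{(\s)}\cap\Z^n$ and $c_n+\mu_n<hs_n$, and with IDP (Theorem~\ref{IDP}) levelness is equivalent to every such point with $h$ above the codegree splitting off a height-one element of the semigroup while staying interior. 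Your case $\mu_n<s_n$ is complete, and in the case $\mu_n\geq s_n$ it is indeed sufficient to produce $\boldsymbol\lambda\in\Pbn\cap\Z^n$ with $\lambda_n=s_n$ and $\boldsymbol\mu-\boldsymbol\lambda\in\Cc_n^{(\s)}\cap\Z^n$.

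The gap is that the existence of such an \emph{integral} $\boldsymbol\lambda$ is never established, and you say so yourself; but this is precisely the mathematical content of the conjecture, not a routine verification. Two concrete difficulties: first, the admissible real interval for $\lambda_j$ in your greedy descent has length $s_j\mu_{j+1}/s_{j+1}-\mu_j$, which is independent of the choice of $\lambda_{j+1}$ and can be smaller than $1$ (it is $0$ when $\mu_j/s_j=\mu_{j+1}/s_{j+1}$), so whether it contains an integer depends delicately on the earlier choices; a purely local, step-by-step argument cannot work and some global or backtracking mechanism is needed. Second, and more seriously, the statement you have reduced to --- for every $\boldsymbol\mu\in\Cc_n^{(\s)}\cap\Z^n$ with $\mu_n\geq s_n$ there is $\boldsymbol\lambda\in\Pbn\cap\Z^n$ on the facet $\lambda_n=s_n$ with $\boldsymbol\mu-\boldsymbol\lambda\in\Cc_n^{(\s)}$ --- makes no reference to $\c$ at all, so the Gorenstein hypothesis has no visible entry point in the step that remains. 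Either this $\c$-free claim holds for every $\s$ (a strong assertion you would have to prove, and which you do not attempt), or your reduction has discarded exactly the information the conjecture requires; note that the paper's own Gorenstein characterization (Theorems~\ref{thm:NewGorensteinChar} and~\ref{GorensteinChar}) needs arithmetic data from both end vertices, which suggests that controlling the geometry near the top vertex, where your hard case lives, by the origin-cone recursion alone is nontrivial. The Veronese observation at the end is fine as far as it goes, but it only covers sequences of the form $d\s'$ with $\Pb_n^{(\s')}$ Gorenstein, which is a thin slice of the hypothesis. As it stands, the proposal is an honest reduction plus an announced hope, and the conjecture remains open.
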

This conjecture, if true, implies that  $\Cc_n^{(\s)}$ a Gorenstein cone is sufficient for $\Pbn$ to be level. 
However, it should be noted that the characterization, though more efficient than explicitly computing the generators of the canonical module, can often be unwieldy for complicated computations.
It may, in fact, be more effective to produce an alternative representation of the level property, perhaps in terms of local cohomology. 

An additional future direction is to consider levelness in $\s$-lecture hall cones. 
There is no canonical choice of grading for the $\s$-lecture hall cones  as there is in the polytopes and the different gradings have different computational advantages (see \cite{BeckEtAl-GorensteinLHC,Olsen-HilbertBases}).
One must choose a grading before approaching this problem.
Preliminary computations with respect to certain gradings suggests that (non-Gorenstein) level $\s$-lecture hall cones are quite rare.

%%%%%%%%%%%%%%%%%%%%%%%%%%%%%%%%%%%%%%%%%%%%%%%%%
\subsection*{Acknowledgements}
The authors would like to thank Matthias Beck, Benjamin Braun, Christian Haase, Carla Savage, Liam Solus, and Zafeirakis Zafeirakopoulos for helpful comments and suggestions on this work.
The authors would also like to thank Takayuki Hibi and Akiyoshi Tsuchiya for organizing the Workshop on Convex Polytopes for Graduate Students at Osaka University in January 2017, which is where this work began. Moreover, the authors would like to express their gratitude to the anonymous referee for their detailed report. Their meticulous reading and constructive criticism improved this paper significantly.

\newcommand{\etalchar}[1]{$^{#1}$}
\providecommand{\bysame}{\leavevmode\hbox to3em{\hrulefill}\thinspace}
\providecommand{\MR}{\relax\ifhmode\unskip\space\fi MR }
% \MRhref is called by the amsart/book/proc definition of \MR.
\providecommand{\MRhref}[2]{%
  \href{http://www.ams.org/mathscinet-getitem?mr=#1}{#2}
}
\providecommand{\href}[2]{#2}

\rem{
\bibliographystyle{amsalpha}
\bibliography{bib_LALHP}{}

\begin{thebibliography}{EHHSM15}

\bibitem[Bat94]{Batyrev}
Victor~V. Batyrev, \emph{Dual polyhedra and mirror symmetry for {C}alabi-{Y}au
  hypersurfaces in toric varieties}, J. Algebraic Geom. \textbf{3} (1994),
  no.~3, 493--535. \MR{1269718}

\bibitem[BBK{\etalchar{+}}15]{BeckEtAl-GorensteinLHC}
Matthias Beck, Benjamin Braun, Matthias K{\"o}ppe, Carla~D. Savage, and
  Zafeirakis Zafeirakopoulos, \emph{s-lecture hall partitions, self-reciprocal
  polynomials, and {G}orenstein cones}, Ramanujan J. \textbf{36} (2015),
  no.~1-2, 123--147. \MR{3296715}

\bibitem[BBK{\etalchar{+}}16]{BeckEtAl-TriangulationsLHC}
\bysame, \emph{Generating functions and triangulations for lecture hall cones},
  SIAM J. Discrete Math. \textbf{30} (2016), no.~3, 1470--1479. \MR{3531728}

\bibitem[BG09]{BrunsGubeladze}
Winfried Bruns and Joseph Gubeladze, \emph{Polytopes, rings, and {$K$}-theory},
  Springer Monographs in Mathematics, Springer, Dordrecht, 2009. \MR{2508056}

\bibitem[BH93]{BrunsHerzog}
Winfried Bruns and J\"urgen Herzog, \emph{Cohen-{M}acaulay rings}, Cambridge
  Studies in Advanced Mathematics, vol.~39, Cambridge University Press,
  Cambridge, 1993. \MR{1251956}

\bibitem[BH18]{BallettiHigashitani}
Gabriele Balletti and Akihiro Higashitani, \emph{Universal inequalities in
  {E}hrhart theory}, Israel J. Math. \textbf{227} (2018), no.~2, 843--859.
  \MR{3846344}

\bibitem[BIR{\etalchar{+}}]{Normaliz}
Winfried Bruns, Bogdan Ichim, Tim R\"{o}mer, Richard Sieg, and Christof Soeger,
  \emph{Normaliz. algorithms for rational cones and affine monoids}, Available
  at \url{https://www.normaliz.uni-osnabrueck.de}.

\bibitem[BL20]{BraendenLeander}
Petter Br\"{a}nd\'{e}n and Madeleine Leander, \emph{Lecture hall p-partitions},
  J. Comb. \textbf{11} (2020), no.2, 391--412.

\bibitem[BME97a]{BME-LHP1}
Mireille Bousquet-M\'elou and Kimmo Eriksson, \emph{Lecture hall partitions},
  Ramanujan J. \textbf{1} (1997), no.~1, 101--111. \MR{1607531}

\bibitem[BME97b]{BME-LHP2}
\bysame, \emph{Lecture hall partitions. {II}}, Ramanujan J. \textbf{1} (1997),
  no.~2, 165--185. \MR{1606188}

\bibitem[BR15]{BeckRobins-CCD}
Matthias Beck and Sinai Robins, \emph{Computing the continuous discretely},
  second ed., Undergraduate Texts in Mathematics, Springer, New York, 2015.
  \MR{3410115}

\bibitem[BS20]{BrandenSolus}
Petter Br\"and\'{e}n and Liam Solus, \emph{Some algebraic properties of lecture
  hall polytopes}, 
  S\'{e}m. Lothar. Combin. \textbf{84B} (2020), Art. 25, 12 pp.

\bibitem[CLS11]{CoxLittleSchenck}
David~A. Cox, John~B. Little, and Henry~K. Schenck, \emph{Toric varieties},
  Graduate Studies in Mathematics, vol. 124, American Mathematical Society,
  Providence, RI, 2011. \MR{2810322}

\bibitem[Cox15]{CoxMirrorSurvey}
David~A. Cox, \emph{Mirror symmetry and polar duality of polytopes}, Symmetry
  \textbf{7} (2015), no.~3, 1633--1645. \MR{3407829}

\bibitem[DNH97]{DeNegriHibi}
Emanuela De~Negri and Takayuki Hibi, \emph{Gorenstein algebras of {V}eronese
  type}, J. Algebra \textbf{193} (1997), no.~2, 629--639. \MR{1458806}

\bibitem[EHHSM15]{EneHerzogHibiSaeedi}
Viviana Ene, J\"urgen Herzog, Takayuki Hibi, and Sara Saeedi~Madani,
  \emph{Pseudo-{G}orenstein and level {H}ibi rings}, J. Algebra \textbf{431}
  (2015), 138--161. \MR{3327545}

\bibitem[Ehr62]{Ehrhart}
Eug\`ene Ehrhart, \emph{Sur les poly\`edres homoth\'etiques bord\'es \`a {$n$}
  dimensions}, C. R. Acad. Sci. Paris \textbf{254} (1962), 988--990.
  \MR{0131403}

\bibitem[Hib87]{Hibi-LatticesAndStraighteningLaws}
Takayuki Hibi, \emph{Distributive lattices, affine semigroup rings and algebras
  with straightening laws}, Commutative algebra and combinatorics ({K}yoto,
  1985), Adv. Stud. Pure Math., vol.~11, North-Holland, Amsterdam, 1987,
  pp.~93--109. \MR{951198}

\bibitem[Hib90]{HibiInequalities}
\bysame, \emph{Some results on {E}hrhart polynomials of convex polytopes},
  Discrete Math. \textbf{83} (1990), no.~1, 119--121. \MR{1065691}

\bibitem[Hib92]{Hibi-ACCP}
\bysame, \emph{Algebraic combinatorics on convex polytopes}, Carslaw
  Publications, Glebe, 1992. \MR{3183743}

\bibitem[HKN18]{Spanning}
Johannes Hofscheier, Lukas Katth\"{a}n, and Benjamin Nill, \emph{Ehrhart theory
  of spanning lattice polytopes}, Int. Math. Res. Not. IMRN (2018), no.~19,
  5947--5973. \MR{3867398}

\bibitem[HM16]{HibiMatsuda-twinned}
Takayuki Hibi and Kazunori Matsuda, \emph{Quadratic {G}r\"obner bases of
  twinned order polytopes}, European J. Combin. \textbf{54} (2016), 187--192.
  \MR{3459062}

\bibitem[Hoc72]{Hochster}
Melvin Hochster, \emph{Rings of invariants of tori, {C}ohen-{M}acaulay rings
  generated by monomials, and polytopes}, Ann. of Math. (2) \textbf{96} (1972),
  318--337. \MR{0304376}

\bibitem[HOT17]{HibiOlsenTsuchiya-SelfDual}
Takayuki Hibi, McCabe Olsen, and Akiyoshi Tsuchiya, \emph{Self dual reflexive
  simplices with {E}ulerian polynomials}, Graphs Combin. \textbf{33} (2017),
  no.~6, 1401--1404. \MR{3735706}

\bibitem[HOT18]{HibiOlsenTsuchiya-LHPGorensteinIDP}
\bysame, \emph{Gorenstein properties and integer decomposition properties of
  lecture hall polytopes}, Mosc. Math. J. \textbf{18} (2018), no.~4, 667--679.
  \MR{3914109}

\bibitem[HS16]{HibiSolus}
Takayuki Hibi and Liam Solus, \emph{Facets of the {$r$}-stable
  {$(n,k)$}-hypersimplex}, Ann. Comb. \textbf{20} (2016), no.~4, 815--829.
  \MR{3572388}

\bibitem[HW08]{HardyWright}
G.~H. Hardy and E.~M. Wright, \emph{An introduction to the theory of numbers},
  sixth ed., Oxford University Press, Oxford, 2008, Revised by D. R.
  Heath-Brown and J. H. Silverman, With a foreword by Andrew Wiles.
  \MR{2445243}

\bibitem[HY18a]{HigashitaniYanagawa-Level}
Akihiro Higashitani and Kohji Yanagawa, \emph{Non-level semi-standard graded
  {C}ohen-{M}acaulay domain with {$h$}-vector {$(h_0, h_1,h_2)$}}, J. Pure
  Appl. Algebra \textbf{222} (2018), no.~1, 191--201. \MR{3681002}

\bibitem[HY18b]{HigashitaniYaganawa}
\bysame, \emph{Non-level semi-standard graded {C}ohen-{M}acaulay domain with
  {$h$}-vector {$(h_0, h_1,h_2)$}}, J. Pure Appl. Algebra \textbf{222} (2018),
  no.~1, 191--201. \MR{3681002}

\bibitem[LS14]{LiuStanley}
Fu~Liu and Richard~P. Stanley, \emph{The lecture hall parallelepiped}, Ann.
  Comb. \textbf{18} (2014), no.~3, 473--488. \MR{3245894}

\bibitem[LS19]{LiuSolus}
Fu~Liu and Liam Solus, \emph{On the relationship between {E}hrhart unimodality
  and {E}hrhart positivity}, Ann. Comb. \textbf{23} (2019), no.~2, 347--365.
  \MR{3962862}

\bibitem[MS05]{MillerSturmfels-CCA}
Ezra Miller and Bernd Sturmfels, \emph{Combinatorial commutative algebra},
  Graduate Texts in Mathematics, vol. 227, Springer-Verlag, New York, 2005.
  \MR{2110098}

\bibitem[Ols18]{Olsen-HilbertBases}
McCabe Olsen, \emph{Hilbert bases and lecture hall partitions}, Ramaujan J. \textbf{47} (2018), no. 3, 509--531. \MR{3874805}

\bibitem[PS13a]{PensylSavage-Wreath}
Thomas~W. Pensyl and Carla~D. Savage, \emph{Lecture hall partitions and the
  wreath products {$C_k\wr S_n$} [reprint of mr3055684]}, Combinatorial number
  theory, De Gruyter Proc. Math., De Gruyter, Berlin, 2013, pp.~137--154.
  \MR{3220920}

\bibitem[PS13b]{PensylSavage-Rational}
\bysame, \emph{Rational lecture hall polytopes and inflated {E}ulerian
  polynomials}, Ramanujan J. \textbf{31} (2013), no.~1-2, 97--114. \MR{3048657}

\bibitem[Sav16]{Savage-LHP-Survey}
Carla~D. Savage, \emph{The mathematics of lecture hall partitions}, J. Combin.
  Theory Ser. A \textbf{144} (2016), 443--475. \MR{3534075}

\bibitem[SS12]{SavageSchuster}
Carla~D. Savage and Michael~J. Schuster, \emph{Ehrhart series of lecture hall
  polytopes and {E}ulerian polynomials for inversion sequences}, J. Combin.
  Theory Ser. A \textbf{119} (2012), no.~4, 850--870. \MR{2881231}

\bibitem[Sta78]{Stanley-HilbertFunctions}
Richard~P. Stanley, \emph{Hilbert functions of graded algebras}, Advances in
  Math. \textbf{28} (1978), no.~1, 57--83. \MR{0485835}

\bibitem[Sta80]{Stanley-NonNeg}
\bysame, \emph{Decompositions of rational convex polytopes}, Ann. Discrete
  Math. \textbf{6} (1980), 333--342, Combinatorial mathematics, optimal designs
  and their applications (Proc. Sympos. Combin. Math. and Optimal Design,
  Colorado State Univ., Fort Collins, Colo., 1978). \MR{593545}

\bibitem[Sta86]{Stanley-PosetPolytopes}
\bysame, \emph{Two poset polytopes}, Discrete Comput. Geom. \textbf{1} (1986),
  no.~1, 9--23. \MR{824105}

\bibitem[Sta91]{StanleyInequalities}
\bysame, \emph{On the {H}ilbert function of a graded {C}ohen-{M}acaulay
  domain}, J. Pure Appl. Algebra \textbf{73} (1991), no.~3, 307--314.
  \MR{1124790}

\bibitem[Sta96]{StanleyGreenBook}
\bysame, \emph{Combinatorics and commutative algebra}, second ed., Progress in
  Mathematics, vol.~41, Birkh\"auser Boston, Inc., Boston, MA, 1996.
  \MR{1453579}

\bibitem[Sta09]{Stapledon1}
Alan Stapledon, \emph{Inequalities and {E}hrhart {$\delta$}-vectors}, Trans.
  Amer. Math. Soc. \textbf{361} (2009), no.~10, 5615--5626. \MR{2515826}

\bibitem[Sta16]{Stapledon2}
\bysame, \emph{Additive number theory and inequalities in {E}hrhart theory},
  Int. Math. Res. Not. IMRN (2016), no.~5, 1497--1540. \MR{3509934}

\bibitem[SV12]{SavageViswanathan-1/kEulerian}
Carla~D. Savage and Gopal Viswanathan, \emph{The {$1/k$}-{E}ulerian
  polynomials}, Electron. J. Combin. \textbf{19} (2012), no.~1, Paper 9, 21.
  \MR{2880640}

\bibitem[SV15]{SavageVisontai}
Carla~D. Savage and Mirk\'o Visontai, \emph{The {$\bold{s}$}-{E}ulerian
  polynomials have only real roots}, Trans. Amer. Math. Soc. \textbf{367}
  (2015), no.~2, 1441--1466. \MR{3280050}

\end{thebibliography}

}

\end{document}